\documentclass[12pt,psamsfonts,leqno,oneside,letterpaper]{amsart}
\usepackage[dvips,text={6.5truein,9truein},left=1truein,top=1truein]{geometry}
\usepackage{amssymb,amsmath,amscd,enumerate}
\usepackage[pdftex]{graphicx}
\usepackage{url}

\usepackage[colorlinks,linkcolor=blue,citecolor=blue,pdfstartview=FitH]{hyperref}
\input xy
\xyoption{all}
\SelectTips{cm}{12}

\usepackage{tikz}

\usepackage{color}
\definecolor{bettergreen}{rgb}{0,0.6,0.4}

\newcommand\x{\begingroup\color{red}}

\DeclareFontFamily{U}{wncy}{}
\DeclareFontShape{U}{wncy}{m}{n}{<->wncyr10}{}
\DeclareSymbolFont{mcy}{U}{wncy}{m}{n}
\DeclareMathSymbol{\El}{\mathord}{mcy}{"4C}

\parindent=0pt
\parskip=5 pt plus 2 pt minus 1pt

\theoremstyle{definition}
\newtheorem{para}{}[subsection]

\newtheorem{remark}[para]{Remark}

\newtheorem{remarks}[para]{Remarks}
\newtheorem{notation}[para]{Notation}
\newtheorem{convention}[para]{Convention}
\newtheorem{definition}[para]{Definition}
\newtheorem{definitions}[para]{Definitions}
\newtheorem{notationremarks}[para]{Notation and Remarks}
\newtheorem{definitionsremarks}[para]{Definitions and Remarks}
\newtheorem{claim}[equation]{Claim}
\newtheorem{plainclaim}[equation]{}
\newtheorem{numberedfact}[para]{Fact}

\newcommand\Alternatives{\begin{enumerate}[(i)]}
\newcommand\EndAlternatives{\end{enumerate}}
\newcommand\Conditions{\begin{enumerate}[(1)]}
\newcommand\EndConditions{\end{enumerate}}

\theoremstyle{plain}
\newtheorem{theorem}[para]{Theorem}
\newtheorem{lemma}[para]{Lemma}
\newtheorem{proposition}[para]{Proposition}
\newtheorem{corollary}[para]{Corollary}
\newtheorem{conjecture}[para]{Conjecture}

\numberwithin{equation}{para}
\numberwithin{figure}{section}

\newcommand\Number{\begin{para}}
\newcommand\EndNumber{\end{para}}
\newcommand\Definition{\begin{definition}}
\newcommand\EndDefinition{\end{definition}}
\newcommand\Definitions{\begin{definitions}}
\newcommand\EndDefinitions{\end{definitions}}
\newcommand\Theorem{\begin{theorem}}
\newcommand\EndTheorem{\end{theorem}}
\newcommand\Conjecture{\begin{conjecture}}
\newcommand\EndConjecture{\end{conjecture}}
\newcommand\Remark{\begin{remark}}
\newcommand\EndRemark{\end{remark}}
\newcommand\Remarks{\begin{remarks}}
\newcommand\EndRemarks{\end{remarks}}
\newcommand\Convention{\begin{convention}}
\newcommand\EndConvention{\end{convention}}
\newcommand\Notation{\begin{notation}}
\newcommand\EndNotation{\end{notation}}
\newcommand\Lemma{\begin{lemma}}
\newcommand\EndLemma{\end{lemma}}
\newcommand\Proposition{\begin{proposition}}
\newcommand\EndProposition{\end{proposition}}
\newcommand\Corollary{\begin{corollary}}
\newcommand\EndCorollary{\end{corollary}}
\newcommand\Claim{\begin{claim}}
\newcommand\EndClaim{\end{claim}}
\newcommand\Proof{\begin{proof}}
\newcommand\EndProof{\end{proof}}
\newcommand\Equation{\begin{equation}}
\newcommand\EndEquation{\end{equation}}

\newcommand\Bullets{\begin{itemize}}
\newcommand\EndBullets{\end{itemize}}


\newcommand{\redden}[1]{\textcolor{red}{#1}}



\newcommand\OneSide{Lemma 3.1}
\newcommand\TrunchTeth{\cite[Definition 3.2]{DeB_TrigTruncTriTet}}
\newcommand\TetraTrans{\cite[Lemma 3.4]{DeB_TrigTruncTriTet}}
\newcommand\ShrinkingTrans{\cite[Proposition 3.5]{DeB_TrigTruncTriTet}}

\newcommand\simple{simple}
\newcommand\discup{\mathbin{\rotatebox[origin=c]{90}{$\vDash$}}}
\newcommand\ie{i.e.\ }

\newcommand\inter{\mathop{\rm int}}

\newcommand{\cut}{\,\backslash\backslash\,}
\newcommand\Hg{{\rm Hg}}
\newcommand\redtopback{\redden{\ref{top back}}}

\newcommand\redcollar{open collar}
\newcommand\redD{D}
\newcommand\redI{I}
\newcommand\redII{II}

\newcommand\kish{\mathop{\rm kish}}
\newcommand\area{\mathop{\rm area}}
\newcommand\chibar{\bar\chi}
\newcommand\HH{{\mathbb H}}
\newcommand\ZZ{{\mathbb Z}}

\newcommand\tDN{\widetilde{DN}}

\newcommand\calp{{\mathcal P}}
\newcommand\calb{{\mathcal B}}
\newcommand\calw{{\mathcal W}}

\newcommand\cals{\mathcal{S}}

\newcommand\co{\colon\thinspace}

\newcommand\be{\mathbf{e}}
\newcommand\bx{\mathbf{x}}
\newcommand\bu{\mathbf{u}}
\newcommand\FF{{\mathbb F}}
\newcommand\vol{\mathop{\rm vol}}
\newcommand\voct{V_{\rm oct}}

\newtheorem{definitionsconventionsremarks}[para]{Definitions,
  Conventions,  and Remarks}
\newcommand\Fr{\mathop{\rm Fr}}
\newcommand\rank{\mathop{{\rm rank}}}
\newcommand\QQ{{\mathbb Q }}
\newcommand\tM{\widetilde M}
\newcommand\tN{\widetilde N}
\newcommand\ti{\widetilde i}
\newcommand\tS{\widetilde S}
\newcommand\tf{\widetilde f}
\newcommand\tF{\widetilde F}
\newcommand\calB{{\mathcal B}}
\newcommand\geodvol{\mathop{\rm geodvol}}
\newcommand\tr{{\widetilde r}}
\newcommand\diffeo{{diffeo}}

\begin{document}

\title{
Volume and topology of bounded and closed hyperbolic $3$-manifolds, II
}

\author{Jason DeBlois}
\address{Department of Mathematics\\
University of Pittsburgh\\
301 Thackeray Hall\\
Pittsburgh, PA 15260}
\email{jdeblois@pitt.edu}

\author{Peter B.~Shalen}
\address{Department of Mathematics, Statistics, and Computer Science
(M/C 249)\\
University of Illinois at Chicago\\
851 S. Morgan St.\\
Chicago, IL 60607-7045}
\email{petershalen@gmail.com}

\begin{abstract}
Let $N$ be a compact, orientable hyperbolic 3-manifold whose boundary is a connected totally geodesic surface of genus $2$. If $N$ has Heegaard genus at least $5$, then its volume is greater than $2\voct$, where $\voct=3.66\ldots$ denotes the volume of a regular ideal hyperbolic octahedron in $\HH^3$. This  improves the lower bound given in our earlier paper ``Volume and topology of bounded and closed hyperbolic $3$-manifolds.'' One ingredient in the improved bound is that in a crucial case, instead of using a single ``muffin'' in $N$ in the sense of Kojima and Miyamoto, we use two disjoint muffins. By combining the result about manifolds with geodesic boundary with the $\log(2k-1)$ theorem and results due to Agol-Culler-Shalen and Shalen-Wagreich, we show that if
$M$ is a closed, orientable hyperbolic $3$-manifold with $\vol M\le\voct/2$, then $\dim H_1(M;\FF_2)\le4$. We also provide new lower bounds for the volumes of closed hyperbolic $3$-manifolds whose cohomology ring over $\FF_2$ satisfies certain restrictions; these improve results that were proved in ``Volume and topology\ldots.''
\end{abstract}

\maketitle

\section{ Introduction
}

It follows from the Mostow rigidity theorem that the volume of a finite-volume hyperbolic $3$-manifold $M$ is a topological invariant of $M$. We may regard $\vol M$ as a measure of the topological complexity of $M$, and it is natural to try to relate this measure of topological complexity with more classical ones, such as the ranks of homology groups of $M$ with prescribed coefficients.

It has long been known that for any prime $p$, the dimension of
$H_1(M;\FF_p)$ (where $\FF_p$ denotes the field of order $p$) is
linearly bounded in terms of $\vol M$. According to \cite[Theorem
5.4]{ratioI}, 
we have 
$\dim H_1(M;\FF_p)<168.602\cdot\vol M$  for every prime $p$. It is
expected that in the forthcoming paper \cite{ratioIII} this result
will be improved in the case $p=2$, by replacing the coefficient
$168.602$ by one that is a bit less than $158$.

For small values of $\vol M$, 
these results were improved by a couple of orders of magnitude in
\cite{acs-surgery}, \cite{CS_vol},  
\cite{CS_vol3.44}, and \cite{kfree-volume}:

\begin{itemize}
\item  Theorem 1.1 of \cite{acs-surgery} asserts that if $\vol M\le1.22$ then $\dim H_1(M;\FF_p)\le 2$ for  $p\ne 2,7$, while $\dim H_1(M;\FF_p)\le 3$ if $ p$ is $2$ or $7$;
\item Theorem 1.2 of \cite{CS_vol} asserts that if $\vol M\le3.08$ then $\dim H_1(M;\FF_2)\le 5$. 
\item Theorem 14.5 of  \cite{kfree-volume} 
(which improves on Theorem 1.7 of \cite
{CS_vol3.44})
asserts that if $\vol M\le3.69$ then $\dim H_1(M;\FF_2)\le 7$.
\end{itemize}
Among these results, only Theorem 1.1 of \cite{acs-surgery} is known
to be sharp, and only for $p=5$: we have $\dim H_1(M;\FF_5)= 2$ when
$M$ is the Weeks manifold. Among the 
closed, orientable hyperbolic
$3$-manifolds $M$ of volume at most $3.69$
known from a census compiled by Hodgson--Weeks 
(see \cite{HodWee}) 
and shipped with SnapPy \cite{SnapPy}, the largest value of
$\dim H_1(M;\FF_2)$ is $3$; it is achieved by the manifold denoted by
${\rm m135}(-1,3)$ in the census,

In this paper, we will denote by $\voct$ the volume of a regular ideal hyperbolic
octahedron. 
It is known that $\voct = 8 \El(\pi/4)=3.6638\ldots$, where $\El$ is the Lobachevsky function; 
see the Example at the end of Section 7.2 of Thurston's notes \cite{Th1}. By the Fourier series for the Lobachevsky 
function given in \cite[Lemma 7.1.2]{Th1}, $\voct$ 
is equal to 
four times Catalan's constant.

We will prove:

\newcommand\MainIngredThrm{Let $M$ be a closed, orientable hyperbolic $3$-manifold with
\linebreak $\vol M\le\voct/2$. Then $\dim H_1(M;\FF_2)\le4$.}
\newtheorem*{main ingredient theorem}{Theorem \ref{main ingredient}}
\begin{main ingredient theorem}
Let $M$ be a closed, orientable hyperbolic $3$-manifold with $\vol M\le\voct/2$. Then $\dim H_1(M;\FF_2)\le4$.
\end{main ingredient theorem}

This result improves on Theorem 1.2 of \cite{CS_vol} in the
range in which it applies, but it is probably not sharp. Among known closed, orientable hyperbolic
$3$-manifolds $M$ of volume at most $\voct/2$, the largest value of
$\dim H_1(M;\FF_2)$ is $2$; it is achieved by the manifold denoted by
${\rm m}009(5,1)$ in the census
cited above,
which is arithmetic and has volume  exactly $\voct/2$.

Contrapositively, the results quoted above from \cite{acs-surgery}, \cite{CS_vol}, and \cite
{CS_vol3.44} may be interpreted as saying that lower bounds on the
dimension of $H_1(M;\FF_2)$, where $M$ is a closed, orientable, hyperbolic
manifold, give lower bounds on $\vol M$.
In \cite{DeSh} it was shown that these lower bounds become stronger if
one places restrictions on the cup product pairing from
$H^1(M;\FF_2)\otimes H^1(M;\FF_2)$ to $H^2(M;\FF_2)$. Specifically,
Theorem 1.2 of \cite{DeSh} asserts that if $\dim H^1(M;\FF_2)$ is at least $5$, and the
dimension of the cup product pairing is at most $1$, then $\vol
M>3.44$. In this paper we will prove the following stronger result:

\newcommand\CupStuffThrm{Let $M$ be a closed, orientable hyperbolic $3$-manifold. Set $r=\dim
H_1(M;\FF_2)$, and let $t$ denote the dimension of 
 the image of the
cup product pairing
$H^1(M;\FF_2)
\otimes 
H^1(M;\FF_2)\to H^2(M;\FF_2)$.
Then:
\begin{enumerate}
\item if $r\ge5$ and $t\le1$, we have $\vol(M)>3.57$; and
\item if $r\ge6$ and $t\le3$, or $r\ge7$ and $t\le5$, we have $\vol(M)>\voct$.
\end{enumerate}
}

\newtheorem*{cup stuff theorem}{Theorem \ref{cup stuff}}
\begin{cup stuff theorem}\CupStuffThrm\end{cup stuff theorem}

The proofs of Theorems \ref{main ingredient} and \ref{cup stuff} use
the following result, Proposition \ref{who you}. Recall that a group $\Pi$ is said to be {\it $k$-free} for a given positive integer $k$ if every subgroup of $\Pi$ whose rank is at most $k$ is free.

\newcommand\WhoYouProp{
Let $M$ be a closed, orientable, hyperbolic $3$-manifold, let
 $k\ge3$
be an integer, and suppose that $\dim
H_1(M;\FF_2)\ge \max(3k-4,6)$. Then either $\pi_1(M)$ is $k$-free,
or $\vol
M>2\voct$.
}

\newtheorem*{who you prop}{Proposition \ref{who you}}

\begin{who you prop}\WhoYouProp\end{who you prop}

The proofs of Theorems \ref{main ingredient} and \ref{cup stuff} and
Proposition \ref{who you} involve combining a rich variety of
topological and geometric techniques.
One key ingredient 
in the proof of Proposition \ref{who you} is the following result, 
which pertains to manifolds with boundary and features a different
measure of topological complexity, the \textit{Heegaard genus}, 
which in this paper will be  denoted $\mathrm{Hg}(N)$. For a compact, connected, orientable $3$-manifold $N$, with or without boundary, $\Hg(N)$ is 
the smallest genus of a closed surface in $N$ that divides it into two compression bodies.

\newcommand\ThrmBoundingMain{Let $N$ be a compact, orientable hyperbolic $3$-manifold with $\partial N$ connected, totally geodesic, and of genus $2$. If $\mathrm{Hg}(N)\ge 5$ then $\mathrm{vol}(N) > 2\voct$.
}
\newtheorem*{bounding main theorem}{Theorem \ref{bounding main}}
\begin{bounding main theorem}\ThrmBoundingMain\end{bounding main theorem}

This directly strengthens Theorem 1.1 of \cite{DeSh}, which gives a lower bound of $6.89$ on the volume of $N$ with the same topological hypothesis. Like the results above for closed manifolds, it is still likely not sharp: in a census compiled by Frigerio-Martelli-Petronio \cite{FMP}, all examples with volume at most $2\voct$ have Heegaard genus $3$. This census collected all manifolds with totally geodesic boundary that decompose into at most four truncated tetrahedra. After Kojima--Miyamoto's minimal-volume examples (of volume $6.45...$), the census of \cite{FMP} contains six manifolds with volume $7.10...$, and the next-smallest have volume $7.33... ~>2\voct$.

We now describe  the structure of the body of the paper, 
which 
breaks naturally into three parts. 
The first of these, consisting of Sections \ref{bound found} through \ref{trimonic}, 
primarily addresses hyperbolic $3$-manifolds with totally geodesic boundary. 
The results of these sections build on the ``geometric part'' of the 
proof scheme 
of \cite[Th.~1.1]{DeSh}, which itself built on vocabulary and results established by Kojima--Miyamoto in their work \cite{KM} that identified the minimum-volume compact hyperbolic $3$-manifolds with totally geodesic boundary. Key tools of analysis include \textit{return paths} and \textit{$(i,j,k)$-hexagons}, which are respectively associated to pairs and triples of boundary components of the universal cover $\widetilde{N}$ (definitions in Subsection \ref{orthoarc}).

Section \ref{bound found} reviews 
the methods introduced in \cite{KM} and further developed in \cite{DeSh} for describing how the lengths of return paths of 
a manifold $N$ satisfying the hypotheses of Theorem \ref{bounding main} 
 are controlled by 
the
geometry of $\partial N$, moderated by $(i,j,k)$-hexagons. 
In places, we incrementally improve these methods.
In particular, Proposition \ref{KM ell_2} simplifies the lower bound on the second-shortest return path length $\ell_2$ as a function of $\ell_1$ given in \cite[Lemma~2.9]{DeSh} (which itself built on \cite[\S 4]{KM}). And Proposition \ref{DeSh ell_2} gives a stronger lower bound than \cite[Prop.~3.9]{DeSh} on $\cosh\ell_1$ in the absence of a $(1,1,1)$-hexagon: $1.23$ here versus $1.215$ there. (The minimum possible 
value for $\cosh\ell_1$ is $1.183..$, 
proved in \cite{KM}.)

Section \ref{bound muffin} 
introduces a
significant new tool 
for bounding volume below: 
a second muffin. Here, ``muffin'' is Kojima-Miyamoto's term for a hyperbolic solid of rotation generated by a certain pentagon with four right angles. They show in \cite[Lemma 3.2]{KM} that such a muffin embeds in $N$ with its rotation center along the shortest return path $\lambda_1$. We called this $\mathrm{Muf}_{\ell_1}$ in \cite{DeSh} and continue to do so here. This is the ``first'' muffin in $N$.

In Subsection \ref{emb muf} we define a broader class of muffins 
and lay out criteria for embedding a second muffin in $N$, centered 
on the second-shortest return path, without overlapping $\mathrm{Muf}_{\ell_1}$.
Each muffin that we use intersects $\partial N$ in 
the union of two  disjoint 
disks, 
its ``caps''. In Subsection \ref{emb col} we give a sufficient condition to 
ensure that a collar of the region of $\partial N$ outside the muffin caps 
is embedded in $N$ and does not overlap the muffins.

Section \ref{bound vol} uses embedded muffins and collars to give lower 
bounds on volume.
The fundamental volume bound given in \cite{KM} and used in \cite{DeSh}, recorded here in (\ref{KM vol ineq}), is a function of $x = \cosh\ell_1$ that records the sum of the volumes of $\mathrm{Muf}_{\ell_1}$ and of an embedded collar of 
the complement in $\partial N$ of its caps. 
For manifolds satisfying certain conditions on $\ell_1$ and $\ell_2$, 
we bring a second muffin into play here, allowing us to recover additional volume.
We prove:

\newcommand\CoreOLarry{Let $N$ be an orientable hyperbolic $3$-manifold with $\partial N$ compact, connected, totally geodesic, and of genus $2$. If the universal cover $\widetilde{N}$ of $N$ contains no $(1,1,1)$-hexagon then $\mathrm{vol}(N) \ge 7.4$.}
\newtheorem*{larry core}{Corollary \ref{larry}}
\begin{larry core}\CoreOLarry\end{larry core}

This is the main result of Section \ref{bound vol}. 
It strengthens the 
lower bound of $6.89$ for $\mathrm{vol}(N)$ which,
under the same hypotheses, follows from Propositions 3.7 and 3.9 
of \cite{DeSh}.

In Section \ref{trimonic} we review and slightly upgrade 
certain 
 results of \cite{DeSh} that address the 
 other case, in which there is a $(1,1,1)$-hexagon  
in $\widetilde{N}$. 
Our upgrades here remove or relax restrictions on $\ell_1$ in the hypotheses 
of their antecedents. Notably,  
Lemma \ref{short cut vs 111 hex} shows unconditionally that $(1,1,1)$-hexagons interact well with shortest return paths, removing a hypothesis of \cite[Lemma 6.6]{DeSh}. 
The thrust of this section follows that of \cite[\S 6]{DeSh}, using the $(1,1,1)$-hexagon to construct 
a  submanifold $X$ of $N$ which is a``non-degenerate trimonic''
submanifold in the sense defined in \cite[\S 5]{DeSh}). 
Results from the ``topological part'' of the proof of \cite[Th.~1.1]{DeSh}, 
which are laid out in Sections 4 and 5 of that paper, will then be applied to $X$ 
without requiring further adaptation.

Sections \ref{bop tack} and \ref{whataterribletitle} constitute
the second of the present paper's three parts. 
Section \ref{bop tack} first introduces topological notation used in the rest of 
the paper, in Subsection \ref{top back}.
In Subsection \ref{geom back} we 
 refine methods of Agol-Storm-Thurston \cite{ASTD} 
for bounding the volume of a hyperbolic Haken $3$-manifold $M$ below 
in terms of the topology of the manifold obtained by cutting $M$ along an 
incompressible surface. The subsection's main result, 
Theorem \ref{from ast} strengthens the conclusion of Theorem 9.1 of \cite{ASTD} 
for compact such $M$ 
by replacing a non-strict inequality with a strict one; it also allows $M$ to 
have connected, totally geodesic boundary.

In Section \ref{whataterribletitle} we first prove  
Theorem \ref{7.4 upgrade}, which improves Theorem 7.4 of \cite{DeSh}. 
The improvement comes from applying Theorem \ref{from ast} 
(replacing the previous result's appeal to \cite[Th.~9.1]{ASTD}) to the frontier in $N$ of the 
trimonic submanifold $X$ constructed in Section \ref{trimonic}, 
in a certain case of the proof of this result. 
The proof of Theorem \ref{bounding main} completes the section, with 
complementary cases supplied there by Theorem \ref{7.4 upgrade} and  Corollary \ref{larry}.

In the paper's third part, 
beginning with Section \ref{closed background}, we shift 
our focus to closed manifolds. 
Section \ref{closed background} gives background necessary to prove our results in this setting. 
The proof of Proposition  \ref{who you} begins by using topological results due
to Culler and Shalen
\cite{CS_vol}, about desingularization of $\pi_1$-injective singular surfaces in $3$-manifolds, to show that if $M$ satisfies the homological hypothesis of the
proposition and $\pi_1(M)$ is not $k$-free, then $M$ contains a closed
incompressible surface $S_0$
of some genus $g$ with $2\le g\le k-1$. The homological hypothesis
implies that the Heegaard genus of $M$ is 
strictly greater than $2g+1$.

If $S\subset M$ is any
closed
incompressible surface, and  
$M'=M\setminus\setminus S$ denotes the manifold   obtained
from $M$ by splitting it along $S$, we denote by 
$\kish(M')$ (sometimes called the
``kishkes'' or ``guts'' of $M'$) the union of those components of
$\overline{M'-\Sigma}$ that have strictly negative Euler
characteristic, where $\Sigma$ denotes the union of the characteristic submanifolds of the
components of $M'$.

The existence of an incompressible surface $S_0$ of genus $g$,
together with the strict lower bound $2g+1$ for the Heegaard genus of
$M$, is used---via a result proved in \cite{kfree-volume}
using topological ideas developed by Culler, DeBlois and Shalen in \cite{CDS}---to produce a closed
incompressible surface $S\subset M$ such that either (1) the
Euler characteristic $\chi(\kish(M\setminus\setminus S))$ is at most
$-2$, or (2) $S$ separates $M$, and
 $M\setminus\setminus S$ has a component which is acylindrical,
 i.e. contains no essential annulus. If (1) holds, the geometric 
methods developed 
 by Agol, Storm and Thurston in
 \cite{ASTD} 
(via Theorem \ref{from
  ast} of this paper) 
give a 
strict 
 lower bound of $2\voct$ for $\vol M$. 

If (2) holds but (1) does not, and if we fix an acylindrical component
$A$ of $M\setminus\setminus S$, then $A$ is \diffeo morphic to a hyperbolic $3$-manifold $N$ with totally
geodesic boundary, and the methods of \cite{ASTD} show that $\vol N$
is a lower bound for $\vol M$. 
Furthermore, in this case a
Mayer-Vietoris calculation shows that $\dim H_1(N,\FF_2)$, and hence
the Heegaard genus of $N$, is at least $5$. If $S$ has genus $2$,
Theorem \ref{bounding main} now gives a 
strict 
lower bound of $2\voct$ for $\vol
N$. If $S$ has genus greater than $2$, the geometric results
established by Miyamoto in \cite{Miy}
give a stronger lower bound.

To prove Theorem  \ref{main ingredient} one must show that if
that $\dim H_1(M;\FF_2) \ge5$ then $\vol M>\voct/2$. If $\pi_1(M)$ is
$3$-free, then results proved by Anderson, Canary, Culler and Shalen in \cite{ACCS}, and improved in
\cite{ACS} by using the celebrated tameness theorem proved by Agol in
of \cite{agol-tameness} and by Calegari-Gabai in
\cite{cg}, give a lower bound for $\vol M$ of $3.08$, which is
considerably bigger than $\voct/2$. If $\dim H_1(M;\FF_2) \ge5$ but
$\pi_1(M)$ is not
$3$-free, a novel but simple application of one of the topological
results established by Shalen and Wagreich in
\cite{SW} provides a $(\ZZ/2\ZZ\times\ZZ/2\ZZ)$-covering space $\tM$ of
$M$ such that $\pi_1(\tM)$ is not $3$-free, and $\dim H_1(\tM;\FF_2) \ge7$. One
can then use  Proposition  \ref{who you} to show that
$\vol\tM>2\voct,$ which implies  $\vol M>\voct/2$.

The proof of  Theorem  \ref{cup stuff} follows the same basic outline
as the proof of Theorem 1.2 of \cite{DeSh}. As in the latter proof, we
distinguish the cases in which $\pi_1(M)$ is or is not $4$-free. 
If
$\pi_1(M)$ is  $4$-free, one of the main results proved by Guzman and
Shalen in \cite{kfree-volume} gives a lower bound of  $3.57$ for $\vol
M$, which is a surprising improvement over the lower bound of $3.44$
established in \cite{CS_vol3.44} and quoted in \cite{DeSh}, and is
enough to prove Assertion (1) of Theorem \ref{cup stuff} in this
case. If we combine the assumption of $4$-freeness with a  lower bound
of $6$ for $\dim H_1(M;\FF_2)$, then arguments given in
\cite{kfree-volume}, based on Agol and Dunfield's results on the
change of volume under Dehn drilling \cite{ASTD} and results due to
Culler and Shalen about volumes and homology of one-cusped manifolds
\cite{CS_onecusp}, allow one to obtain a lower bound of $3.69$ for
$\vol M$, which establishes Assertion (2) of Theorem \ref{cup stuff} in this
case.

If $M$ satisfies the hypothesis of Assertion (1) of (2) of Theorem \ref{cup stuff} and
$\pi_1(M)$ is not $4$-free, then one uses the homological hypotheses to find a two-sheeted
covering $\tM$ of $M$ such that $\pi_1(\tM)$ is not $4$-free, and
$\dim H_1(M;\FF_2)\ge8$. Proposition \ref{who you} then provides a 
strict 
lower bound of $2\voct$ for $\vol\tM$, and hence a 
strict 
lower bound of $\voct$ for $\vol M$. (This is similar to the argument
used to prove  \cite[Theorem
1.2]{DeSh} in the non-free case, but we obtain a stronger lower bound
in this context thanks to Theorem \ref{bounding main}. 
In our proof of  Theorem \ref{cup stuff}, in addition to improving the
estimates given by Theorem
1.2 of \cite{DeSh}, we have taken the opportunity to provide
more detail than was given in the proof of the latter result, and to correct a citation 
that appeared in that proof.)

We are grateful to Nathan Dunfield for explaining the example ${\rm
  m}009(5,1)$ that was referred to above, and to Joel Hass for explaining 
 material related to Proposition \ref{FHS plus epsilon} 
to us.

\section{Existing foundations}\label{bound found}

This section reviews an approach developed by Kojima--Miyamoto \cite{KM}, and further exploited 
in our 
earlier 
paper 
\cite{DeSh}, to controlling the geometry of a hyperbolic $3$-manifold $N$ with totally geodesic boundary via the geometry of $\partial N$.  We focus on the \textit{orthospectrum} of $N$, the sequence of lengths of properly immersed arcs in $N$ that meet $\partial N$ perpendicularly.  

Section \ref{orthoarc} relates the orthospectrum of $N$ to a certain spectrum of arc lengths on $\partial N$ using hyperbolic trigonometry.  In Section \ref{KM bounds} we describe Kojima--Miyamoto's packing arguments for bounding $\ell_2$, the second-smallest ortholength, below in terms of $\ell_1$.  We conclude in Section \ref{DeSh bounds} with an observation from \cite{DeSh} that bounds $\ell_2$ above in terms of $\ell_1$ in the absence of ``$(1,1,1)$-hexagons'' (see below).

In fact we make incremental improvements below to the existing bounds we describe.  These bounds are for the most part still not strong enough to be directly useful to our volume estimates, but we use them to delimit a search space for a procedure to find sharper ones.

\subsection{Ortholengths vs arclengths}\label{orthoarc}  If $N$ is a complete hyperbolic $3$-manifold with totally geodesic boundary, its universal cover $\widetilde{N}$ may be identified with a convex subset of $\mathbb{H}^3$ bounded by a collection of geodesic hyperplanes.  We will do so, and we will also continue to use the following terminology which originated in \cite{Ko} and \cite{KM} and was used in \cite{DeSh}.

\begin{definition}  Let $N$ be a hyperbolic $3$-manifold with compact totally geodesic boundary, and let $\widetilde{N} \subset \mathbb{H}^3$ be its universal cover.  A \textit{short cut} in $\widetilde{N}$ is a geodesic arc joining the closest points of two distinct components of $\partial\widetilde{N}$.  A \textit{return path}, or \textit{orthogeodesic}, in $N$ is the projection of a short cut under the universal covering map.  \end{definition}

Each return path is a homotopically non-trivial geodesic arc properly immersed in $N$, perpendicular to $\partial N$ at each of its endpoints.  Corollary 3.3 of \cite{Ko} asserts that for a fixed $K \in \mathbb{R}$ and finite-volume hyperbolic manifold $N$ with totally geodesic boundary, there are only finitely many return paths in $N$ with length less than $K$.  This makes possible the following:

\begin{definition}\label{orthospectrum}  Let $N$ be a finite-volume hyperbolic $3$-manifold with compact totally geodesic boundary.  Upon enumerating the collection of return paths as $\{\lambda_1,\lambda_2,\hdots\}$, where for each $i\in\mathbb{N}$ the length of $\lambda_{i+1}$ is at least the length of $\lambda_i$, let $\ell_i$ denote the length of $\lambda_i$.  The \textit{orthospectrum} of $N$ is the sequence $(\ell_1,\ell_2,\hdots)$.  Its elements are \textit{ortholengths}.\end{definition}

For $N$ as above, hyperbolic trigonometry relates the orthospectrum to arc lengths on $\partial N$ by means of a class of totally geodesic hexagons in $\widetilde{N}$ that have short cuts as some edges.  Below we reproduce two lemmas from \cite{DeSh} that describe the hexagons in question.

\begin{lemma}[\cite{DeSh}, Lemma 2.3]\label{mutual perp}  Suppose that $\Pi_1$, $\Pi_2$, and $\Pi_3$ are mutually disjoint geodesic planes in $\mathbb{H}^3$.  For each two-element subset $\{i,j\}$ of $\{1,2,3\}$, let $\lambda_{ij}$ denote the common perpendicular to $\Pi_i$ and $\Pi_j$.  Then $\lambda_{12}$, $\lambda_{13}$, and $\lambda_{23}$ lie in a common plane $\Pi$.  \end{lemma}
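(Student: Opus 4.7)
My plan is to work in the Klein (projective disk) model of $\HH^3$, realizing it as the open unit ball $B \subset \mathbb{R}^3 \subset \mathbb{RP}^3$ in which hyperbolic geodesic planes correspond to intersections of $B$ with affine $2$-planes. The key dictionary is the polarity induced by the unit sphere $\partial B$: each geodesic plane $\Pi_i$ has a unique pole $P_i \in \mathbb{RP}^3 \setminus \overline{B}$, and $\Pi_i$ is recovered as the intersection of $B$ with the polar plane of $P_i$.

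The central fact I would invoke is that, when $\Pi_i$ and $\Pi_j$ are disjoint, the common perpendicular $\lambda_{ij}$ is precisely the chord $B \cap \ell_{ij}$, where $\ell_{ij}$ denotes the projective line through $P_i$ and $P_j$. This is forced by the polarity machinery: the polar of $\ell_{ij}$ is the intersection of the two extended polar planes, and that intersection avoids $B$ exactly because $\Pi_i \cap \Pi_j = \emptyset$, so polarity guarantees that $\ell_{ij}$ itself meets $B$ in a chord. The Klein-model characterization of hyperbolic perpendicularity---two chords are orthogonal at their crossing precisely when each underlying projective line passes through the pole of the other---then shows that this chord is simultaneously perpendicular to $\Pi_i$ and to $\Pi_j$, and hence equals $\lambda_{ij}$ by uniqueness of the common perpendicular.

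With this identification in hand the lemma is almost immediate. If $P_1, P_2, P_3$ are not collinear in $\mathbb{RP}^3$, they span a unique projective plane $\pi$, and all three lines $\ell_{12}, \ell_{13}, \ell_{23}$ lie in $\pi$. Since each $\ell_{ij}$ already crosses $B$ in a nonempty chord, $\pi$ itself must cross $B$, so $\Pi := \pi \cap B$ is a hyperbolic geodesic plane containing $\lambda_{12}, \lambda_{13}$, and $\lambda_{23}$. In the degenerate collinear case, the three lines $\ell_{ij}$ coincide along a single line $\ell$, so the three common perpendiculars all lie on the single chord $\ell \cap B$ and hence trivially in any geodesic plane through that chord.

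The only real obstacle is cleanly justifying the polarity identification of $\lambda_{ij}$ with $B \cap \ell_{ij}$, because the Klein model preserves Euclidean incidence but distorts hyperbolic angles and lengths. I would handle this either by citing a standard source on the projective model or by giving a short derivation from the pole-chord characterization of perpendicularity, which in turn follows from the fact that reflection through $\Pi_i$ is realized projectively as the harmonic inversion fixing $P_i$ and its polar plane.
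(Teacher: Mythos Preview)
Your argument via the Klein model and projective polarity is correct and is a standard, clean way to see this fact: once one knows that the common perpendicular to two ultraparallel planes is the chord on the line joining their poles, coplanarity of the three perpendiculars is immediate from the fact that three points span a projective plane. One small looseness: ``$\Pi_i\cap\Pi_j=\emptyset$'' in $\HH^3$ alone does not guarantee that $\ell_{ij}$ meets $B$ in a chord---asymptotic planes are disjoint in $\HH^3$ but have $\ell_{ij}$ tangent to $\partial B$---so strictly you need the planes to be ultraparallel (disjoint closures), which is the relevant case here since the $\lambda_{ij}$ are assumed to exist.

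Note that the present paper does not give its own proof of this lemma; it simply quotes the statement from \cite{DeSh}, so there is no in-paper argument to compare against. Your projective approach is one of the most natural proofs of this classical fact.
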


\begin{lemma}[\cite{DeSh}, Lemma 2.4]\label{rt ang hex}  Let $N$ be a finite-volume hyperbolic $3$-manifold with compact totally geodesic boundary, and suppose $\Pi_1$, $\Pi_2$, and $\Pi_3$ are distinct components of $\partial\widetilde{N}$.  Let $\Pi$ be the plane, produced by Lemma \ref{mutual perp}, which contains the short cuts $\lambda_{12}$, $\lambda_{13}$, and $\lambda_{23}$.  Let $C$ be the right-angled hexagon in $\Pi$ with edges the $\lambda_{ij}$ for $1\leq i<j\leq 3$, together with the set of geodesic arcs in the $\Pi_i$ joining their endpoints.  Then $C\subset\widetilde{N}$, and $C\cap\partial\widetilde{N} = \bigcup_i(C\cap\Pi_i)$.  \end{lemma}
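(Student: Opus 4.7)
The plan is to use convexity of $\widetilde{N}\subset\mathbb{H}^3$, which holds because each component of $\partial\widetilde{N}$ is a totally geodesic plane bounding a closed half-space containing $\widetilde{N}$, together with the fact that distinct components of $\partial\widetilde{N}$ are disjoint. The plane $\Pi$ and the short cuts $\lambda_{ij}$ are furnished by Lemma \ref{mutual perp}.

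First I would show $C\subset\widetilde{N}$ by verifying $\partial C\subset\widetilde{N}$ and then invoking convexity. The three arcs of $C$ on the planes $\Pi_i$ lie along the geodesics $\Pi\cap\Pi_i$, so they are contained in $\Pi_i\subset\partial\widetilde{N}$. Each short cut $\lambda_{ij}$ joins a pair of points of $\widetilde{N}$, so $\lambda_{ij}\subset\widetilde{N}$ by convexity. Since $C$ is a convex hexagon inside the totally geodesic plane $\Pi$, the convex hull of $\partial C$ in $\mathbb{H}^3$ equals $C$, and convexity of $\widetilde{N}$ then gives $C\subset\widetilde{N}$.

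Next I would verify $C\cap\partial\widetilde{N}=\bigcup_i(C\cap\Pi_i)$ by ruling out that $C$ meets any other boundary component $\Pi_0$ of $\partial\widetilde{N}$. First, $\Pi$ is itself not a boundary component: $\Pi$ meets each $\Pi_i$ along the geodesic carrying an edge of $C$, which would contradict disjointness of boundary components. Hence $\Pi\cap\Pi_0$ is either empty or a single geodesic $\gamma\subset\Pi$. Since $\Pi_0$ bounds a closed half-space containing $\widetilde{N}\supset C$, the hexagon $C$ lies in one of the closed half-planes of $\Pi$ cut off by $\gamma$, making $\gamma$ a support line and $C\cap\gamma\subset\partial C$. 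But $\partial C$ cannot meet $\Pi_0$: the arcs on the $\Pi_i$ lie in planes disjoint from $\Pi_0$; the endpoints of each $\lambda_{ij}$ lie on the $\Pi_i$ and so also miss $\Pi_0$; and an interior point of $\lambda_{ij}$ cannot lie on $\Pi_0$, since a geodesic meeting a plane in $\mathbb{H}^3$ either lies in it or crosses it transversally, and a transverse crossing would force $\lambda_{ij}$ to exit the half-space containing $\widetilde{N}$.

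The main technical point is combining convexity of $\widetilde{N}$ with the geodesic/plane dichotomy in $\mathbb{H}^3$ to preclude any piece of $C$ from drifting onto an extraneous boundary plane; once these are in hand, the support-line argument closes things out quickly.
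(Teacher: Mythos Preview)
Your argument is correct. The paper does not supply its own proof of this lemma; it is quoted from \cite{DeSh} (Lemma 2.4 there), with the remark immediately following the statement that the proof in \cite{DeSh} carries over without revision to the finite-volume setting. Your convexity argument---using that $\widetilde{N}$ is an intersection of half-spaces, that the right-angled hexagon $C$ is convex in $\Pi$, and the support-line/transversality analysis to exclude contact with a fourth boundary plane---is a natural and complete proof of the result, and is essentially the kind of argument one would expect in \cite{DeSh}.
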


In \cite{DeSh} the result above is stated only for compact hyperbolic $3$-manifolds with totally geodesic boundary, but its proof carries over to the current context without revision.

\begin{definition}\label{internal external} For a finite-volume hyperbolic $3$-manifold $N$ with compact totally geodesic boundary and three components $\Pi_1$, $\Pi_2$, and $\Pi_3$ of $\widetilde{N}$, let $C$ be the right-angled hexagon supplied by Lemma \ref{rt ang hex}.  We call the edges of $C$ which are short cuts \textit{internal}, and the edges in $\partial\widetilde{N}$ \textit{external}.  If the internal edges lift $\lambda_i$, $\lambda_j$, and $\lambda_k$, we call $C$ an \textit{$(i,j,k)$ hexagon}.  \end{definition}

We will say that the \textit{feet} of a return path $\lambda$ are the points $\lambda \cap \partial N$, and similarly for the feet of a short cut.  The orthospectrum of $N$ is related to the set of lengths of arcs in $\partial N$ joining feet of return paths.

\begin{definition}\label{arcspectrum}  Let $N$ be a finite-volume hyperbolic $3$-manifold with compact totally geodesic boundary.  For $i,j\in\mathbb{N}$ let $d_{ij}$ be the length of the shortest non-constant geodesic arc joining a foot of $\lambda_i$ to one of $\lambda_j$, or $\infty$ if no such arc exists.  For any $k\in\mathbb{N}$ let $d_{ij}^{(k)}$ be the length of the $k$th-shortest such arc, or $\infty$ as appropriate.  \end{definition}

\begin{lemma}\label{boundary arc lengths}For $i$, $j$, and $k$ in $\mathbb{N}$, let $X_{ij}^k$ be determined by \begin{align}\label{Xijk}
  \cosh X_{ij}^k = \frac{\cosh\ell_i\cosh\ell_j+\cosh\ell_k}{\sinh\ell_i\sinh\ell_j}  \end{align}
For a finite-volume hyperbolic $3$-manifold $N$ with compact totally geodesic boundary and any fixed $i, j \in \mathbb{N}$, if $k_1 \leq k_2 \leq k_3 \hdots$ is the set of $k\in\mathbb{N}$ such that there exists an $(i,j,k)$ hexagon in $\widetilde{N}$ then $d_{ij} = X_{ij}^{k_1}$, and $d_{ij}^{(n)} = X_{ij}^{k_n}$ for $n>1$.  In particular, $d_{ij}^k \geq X_{ij}^k$ for all $i,j,k\in\mathbb{N}$.\end{lemma}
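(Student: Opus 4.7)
The plan is to establish a bijective correspondence between non-constant geodesic arcs $\alpha\subset\partial N$ joining a foot of $\lambda_i$ to one of $\lambda_j$ and $(i,j,k)$-hexagons in $\widetilde N$ (as $k$ varies), and then match lengths with the formula (\ref{Xijk}) via the classical right-angled hyperbolic hexagon identity.

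Given such an $\alpha$, I lift to a geodesic arc $\widetilde\alpha$ in $\partial\widetilde N$. Since the components of $\partial\widetilde N$ are pairwise disjoint geodesic planes, $\widetilde\alpha$ must lie entirely in a single component $\Pi$. Its endpoints $\widetilde p,\widetilde q$ are feet of short cuts $\widetilde\lambda_i,\widetilde\lambda_j$ lifting $\lambda_i,\lambda_j$, whose other endpoints lie on components $\Pi',\Pi''$ respectively. I must check that $\Pi'\ne\Pi''$: if they were equal, both $\widetilde\lambda_i$ and $\widetilde\lambda_j$ would realize the (unique) common perpendicular between $\Pi$ and $\Pi'=\Pi''$, forcing $\widetilde p=\widetilde q$ and contradicting non-constancy of $\alpha$. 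With three distinct planes in hand, Lemma \ref{rt ang hex} produces a right-angled hexagon $C$ whose internal edges include $\widetilde\lambda_i,\widetilde\lambda_j$ together with the common perpendicular between $\Pi'$ and $\Pi''$; if the latter has length $\ell_k$, then $C$ is an $(i,j,k)$-hexagon. Its external edge on $\Pi$ is the unique geodesic segment in $\Pi$ from $\widetilde p$ to $\widetilde q$, and hence coincides with $\widetilde\alpha$. Conversely, each $(i,j,k)$-hexagon projects its external edge on the plane meeting short cuts of lengths $\ell_i$ and $\ell_j$ to an arc of the stated type in $\partial N$.

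The classical identity for right-angled hyperbolic hexagons expresses the length of the external edge opposite an internal edge of length $\ell_k$ as
\[
\cosh|\widetilde\alpha|=\frac{\cosh\ell_i\cosh\ell_j+\cosh\ell_k}{\sinh\ell_i\sinh\ell_j}=\cosh X_{ij}^k,
\]
so $|\alpha|=X_{ij}^k$. Direct inspection of (\ref{Xijk}) shows $X_{ij}^k$ is strictly increasing in $\ell_k$ and therefore in $k$, so enumerating arc lengths in increasing order matches the $X_{ij}^{k_n}$ in increasing order of $k_n$. This yields $d_{ij}=X_{ij}^{k_1}$ and $d_{ij}^{(n)}=X_{ij}^{k_n}$ for $n>1$. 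Because $(k_n)$ is an increasing sequence in $\mathbb N$, we have $k_n\geq n$; by monotonicity, $d_{ij}^{(n)}=X_{ij}^{k_n}\geq X_{ij}^n$, giving the final inequality.

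The main subtlety will be rigorously setting up the correspondence between arcs and hexagons, specifically confirming that any lift of a geodesic arc in $\partial N$ remains in a single component of $\partial\widetilde N$ and that $\Pi'\ne\Pi''$; the remaining steps reduce to applying the hexagon identity and exploiting monotonicity of $X_{ij}^k$ in $k$.
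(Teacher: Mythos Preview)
Your proof is correct and follows essentially the same approach as the paper's: lift an arc to an external edge of a right-angled hexagon via Lemma~\ref{rt ang hex}, apply the right-angled hexagon rule to obtain the length formula, and use monotonicity of $X_{ij}^k$ in $k$ to order the arc lengths. You add helpful detail the paper omits---verifying that $\Pi'\ne\Pi''$ (so that Lemma~\ref{rt ang hex} applies), spelling out the converse direction of the arc/hexagon correspondence, and making explicit the inequality $k_n\ge n$ that yields the final assertion.
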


\begin{proof} Any given geodesic arc $\gamma$ on $\partial N$ that joins a foot of $\lambda_i$ to one of $\lambda_j$ lifts to a geodesic arc $\tilde{\gamma}$ on a component $\Pi$ of $\partial\widetilde{N}$ joining the foot of a lift $\tilde{\lambda}_i$ of $\lambda_i$ to that of a lift $\tilde{\lambda}_j$ of $\lambda_j$.  The feet of $\tilde{\lambda}_i$ and $\tilde{\lambda}_j$ opposite their intersections with $\tilde{\gamma}$ lie in components of $\partial\widetilde{N}$ joined by a short cut $\tilde{\lambda}_k$ for some $k$.  Lemma \ref{rt ang hex} then implies that $\tilde{\gamma}$ is the external edge of an $(i,j,k)$ hexagon opposite $\tilde{\lambda}_k$.  The ``right-angled hexagon rule'' \cite[Theorem 3.5.13]{Ratcliffe} implies that the length of $\tilde\gamma$, hence of $\gamma$, is $X_{ij}^k$.

It is a quick consequence of the definition that for fixed $i$ and $j$, if $k < k'$ then $X_{ij}^k < X_{ij}^{k'}$.  The lemma follows.\end{proof}

We close this subsection with some basic observations on the monotonicity of the $X_{ij}^k$ for $i,j,k\in\{1,2\}$.

\begin{lemma}\label{ordering external lengths} The function $X_{11}^1$ defined Lemma \ref{boundary arc lengths} is decreasing in $\ell_1$.  $X_{11}^2$ decreases in $\ell_1$ and increases in $\ell_2$, and $X_{12}^1$ decreases in both $\ell_1$ and $\ell_2$.  Moreover, $X_{11}^1 \geq X_{12}^1 \geq X_{22}^1$.\end{lemma}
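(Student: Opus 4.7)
The plan is to rewrite each of the three quantities $X_{11}^1$, $X_{11}^2$, and $X_{12}^1$ in a form in which its monotonicity can be read off by inspection, and then for the inequality chain, to observe that $X_{11}^1$, $X_{12}^1$, and $X_{22}^1$ all arise by evaluating the single function $f(a,b,c)=\cosh^{-1}\!\bigl((\cosh a\cosh b+\cosh c)/(\sinh a\sinh b)\bigr)$ at different choices of arguments from $\{\ell_1,\ell_2\}$, with the third argument held at $\ell_1$.

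For the monotonicity of $X_{11}^1$, first I would use the factorization $\sinh^2\ell_1=(\cosh\ell_1-1)(\cosh\ell_1+1)$ to cancel $\cosh\ell_1+1$ against the numerator $\cosh^2\ell_1+\cosh\ell_1=\cosh\ell_1(\cosh\ell_1+1)$, giving $\cosh X_{11}^1=\cosh\ell_1/(\cosh\ell_1-1)=1+1/(\cosh\ell_1-1)$, which is manifestly a decreasing function of $\ell_1$ on $(0,\infty)$. For $X_{11}^2$, I would rewrite
\[
\cosh X_{11}^2=\frac{\cosh^2\ell_1+\cosh\ell_2}{\cosh^2\ell_1-1}=1+\frac{1+\cosh\ell_2}{\cosh^2\ell_1-1},
\]
making both assertions (decreasing in $\ell_1$, increasing in $\ell_2$) immediate. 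For $X_{12}^1$, the half-angle identities $\cosh\ell_2+1=2\cosh^2(\ell_2/2)$ and $\sinh\ell_2=2\sinh(\ell_2/2)\cosh(\ell_2/2)$ yield the factorization
\[
\cosh X_{12}^1=\frac{\cosh\ell_1(\cosh\ell_2+1)}{\sinh\ell_1\sinh\ell_2}=\coth\ell_1\cdot\coth(\ell_2/2),
\]
and since $\coth$ is positive and strictly decreasing on $(0,\infty)$, both monotonicities follow at once.

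For the final ordering, I would compute the partial derivative of $f$ in its first argument via the quotient rule: a short calculation, using $\sinh^2 a-\cosh^2 a=-1$, shows that
\[
\frac{\partial}{\partial a}\frac{\cosh a\cosh b+\cosh c}{\sinh a\sinh b}=\frac{-(\cosh b+\cosh a\cosh c)}{\sinh^2 a\,\sinh b}<0
\]
whenever $a,b>0$, so $f$ is strictly decreasing in $a$, and by symmetry in $b$ as well. Since $X_{11}^1=f(\ell_1,\ell_1,\ell_1)$, $X_{12}^1=f(\ell_1,\ell_2,\ell_1)$, and $X_{22}^1=f(\ell_2,\ell_2,\ell_1)$ all share $c=\ell_1$, and by hypothesis $\ell_1\le\ell_2$, increasing the second slot from $\ell_1$ to $\ell_2$ gives $X_{11}^1\ge X_{12}^1$, and increasing the first slot from $\ell_1$ to $\ell_2$ gives $X_{12}^1\ge X_{22}^1$. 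No real obstacle is anticipated: the entire lemma is a routine exercise in hyperbolic trigonometry, with the only mild trick being the half-angle factorization used for $X_{12}^1$.
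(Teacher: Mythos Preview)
Your proof is correct and follows essentially the same approach as the paper: rewrite each $\cosh X_{ij}^k$ in a form where the monotonicity is evident by inspection, then deduce the ordering from monotonicity of the common expression in the slot that varies. Your half-angle factorization $\cosh X_{12}^1=\coth\ell_1\cdot\coth(\ell_2/2)$ is equivalent to (and slightly cleaner than) the paper's form $\coth\ell_1\sqrt{1+2/(\cosh\ell_2-1)}$, and your partial-derivative computation for the ordering is a minor variant of the paper's term-by-term argument, but neither difference is substantive.
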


\begin{proof}  Each of these follows from (\ref{Xijk}) with a little manipulation.  We have for instance:\begin{align}\label{X111}
  \cosh X_{11}^1 = \frac{\cosh^2\ell_1 +\cosh \ell_1}{\sinh^2\ell_1} = \frac{\cosh\ell_1}{\cosh\ell_1-1} = 1 + \frac{1}{\cosh\ell_1-1}, \end{align}
and the first assertion is clear.  Along similar lines:\begin{align}\label{X112}
  \cosh X_{11}^2 = \frac{\cosh^2\ell_1+\cosh\ell_2}{\sinh^2\ell_1} = 1 + \frac{1+\cosh\ell_2}{\sinh^2\ell_1}, \end{align}
and the second assertion holds.  Finally:\begin{align}\label{X121}
  \cosh X_{12}^1 = \frac{\cosh\ell_1}{\sinh\ell_1}\sqrt{\frac{\cosh\ell_2+1}{\cosh\ell_2-1}} = \coth\ell_1\sqrt{1+\frac{2}{\cosh\ell_2-1}}\end{align}
That $X_{12}^1$ decreases in both $\ell_1$ and $\ell_2$ now follows from the fact that $\coth x$ decreases in $x$.  Moreover, both $\cosh X_{11}^1$ and $\cosh X_{12}^1$ are of the form
$$ \frac{\cosh x \cosh \ell_1 + \cosh \ell_1}{\sinh x\sinh \ell_1} = \coth x \coth\ell_1 + \frac{\cosh \ell_1}{\sinh x\sinh\ell_1},  $$
where one substitutes $\ell_1$ for $x$ to produce $\cosh X_{11}^1$ and $\ell_2$ for $x$ to produce $\cosh X_{12}^1$.  Since $x \mapsto \coth x$ is decreasing and $x \mapsto \sinh x$ is increasing, it follows that $\cosh X_{11}^1 \geq \cosh X_{12}^1$.  This implies the left-hand inequality above; the right-hand inequality follows similarly.\end{proof}

\subsection{Kojima--Miyamoto's lower bound}\label{KM bounds}  Here we will reproduce an argument originally from \cite{KM} which was slightly improved in \cite{DeSh}, and improve it slightly further.  It gives a lower bound on $\ell_2$ as a function of $\ell_1$ among finite-volume hyperbolic $3$-manifolds with compact totally geodesic boundary.  This bound is rarely near sharp, but it is easily computable. 
The functions $R$, $R'$, and $E$ of $\ell_1$ below are as in \cite{KM} and \cite{DeSh}; $R''$ and $M$ match \cite{DeSh}.

The basic idea here is that for a hyperbolic $3$-manifold $N$ with compact totally geodesic boundary, the topology of $\partial N$ determines its area by the Gauss--Bonnet theorem, and this bounds the areas of disks in a packing of $\partial N$.  The radii of such a packing are determined by the orthospectrum of $N$.

\begin{lemma}\label{disk radii}  For $X_{ij}^k$ as defined in (\ref{Xijk}), let $R = X_{11}^1/2$, satisfying\begin{align}\label{R}
\cosh R = \sqrt{\frac{2\cosh \ell_1 - 1}{2\cosh \ell_1 - 2}} = \sqrt{1 + \frac{1}{2\cosh\ell_1-2}}, \end{align}
and let $S = X_{12}^1 - R$.  For a finite-volume hyperbolic $3$-manifold $N$ with compact, connected totally geodesic boundary such that $S>0$ there are four disks embedded in $\partial N$ without overlapping: two of radius $R$, centered at feet of $\lambda_1$, and two of radius $S$, centered at feet of $\lambda_2$.\end{lemma}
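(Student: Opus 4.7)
The plan is to reduce each embedding and non-overlap condition for the four disks to a distance bound on $\partial N$ provided by Lemma \ref{boundary arc lengths}. Denote the two feet of $\lambda_1$ by $p_1,p_2$ and those of $\lambda_2$ by $r_1,r_2$.

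For $i,j\in\{1,2\}$, I first observe that on any component $\Pi$ of $\partial\widetilde N$, the distance between two distinct lifts---one of any foot of $\lambda_i$ and the other of any foot of $\lambda_j$---is at least $X_{ij}^1$. Each such lift is an endpoint of a lift of $\lambda_i$ (respectively $\lambda_j$); the opposite endpoints of these two return-path lifts lie on a third component of $\partial\widetilde N$, so together they form an $(i,j,k)$-hexagon for some $k\ge 1$ via Lemma \ref{rt ang hex}. The external edge of this hexagon on $\Pi$ is the geodesic between the two lifts in question, with length $X_{ij}^k\ge X_{ij}^1$ by the monotonicity in $k$ established during the proof of Lemma \ref{boundary arc lengths}.

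Projecting to $\partial N$, this has the following consequences. The injectivity radius at each foot of $\lambda_1$ is at least $X_{11}^1/2=R$, so each $R$-disk is embedded; the inequality $d_{\partial N}(p_1,p_2)\ge X_{11}^1=2R$ gives disjointness of the two $R$-disks; and $d_{\partial N}(p_i,r_j)\ge X_{12}^1=R+S$ rules out overlap between an $R$-disk and an $S$-disk. The remaining conditions---embedding of each $S$-disk and disjointness of the two $S$-disks---both reduce to the single inequality $X_{22}^1\ge 2S$, equivalently $X_{11}^1+X_{22}^1\ge 2X_{12}^1$.

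The main technical step is therefore this last inequality. Set $g(\ell_2):=X_{11}^1+X_{22}^1-2X_{12}^1$ with $\ell_1$ fixed and $\ell_2\ge\ell_1$, writing $a=\cosh\ell_1$ and $b=\cosh\ell_2$. Using (\ref{X111}), (\ref{X121}), and $\cosh X_{22}^1=(\cosh^2\ell_2+\cosh\ell_1)/\sinh^2\ell_2$ from (\ref{Xijk}), one verifies that $g(\ell_1)=0$ and that $g'(\ell_2)\ge 0$ if and only if $b\le 6a^2-1$ (this reduces, after squaring, to the factorization $b^2-6a^2 b+6a^2-1=(b-1)(b-(6a^2-1))$), so $g$ is unimodal in $\ell_2$. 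The limit $g(\ell_2\to\infty)=X_{11}^1-2\cosh^{-1}(\coth\ell_1)$ reduces, via (\ref{R}) and $\coth\ell_1=a/\sqrt{a^2-1}$, to the inequality $(2a-1)(a+1)\ge 2a^2$, i.e.\ $a\ge 1$, which holds trivially. Unimodality together with $g(\ell_1)=0$ and $g(\infty)\ge 0$ then forces $g\ge 0$ throughout $\ell_2\ge\ell_1$, completing the proof.
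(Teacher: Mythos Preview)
Your reductions to the distance inequalities $d_{11}\ge 2R$, $d_{12}\ge R+S$, and $d_{22}\ge 2S$ match the paper's setup. (One wording slip: the opposite endpoints of the two lifted return paths do not in general lie on a single ``third component'' but on two further components $\Pi_1,\Pi_2$; the short cut between $\Pi_1$ and $\Pi_2$ is what supplies the index $k$ in the $(i,j,k)$-hexagon. This is precisely what Lemma \ref{boundary arc lengths} establishes, so you could simply cite it.)

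The gap is in the main technical step. Your claim that $g'(\ell_2)\ge 0$ if and only if $b\le 6a^2-1$ is incorrect. After squaring and cross-multiplying (all quantities positive), the sign of $g'$ in the variable $b=\cosh\ell_2$ is that of
\[
a^2(b+1)(2b^2+a-1)-b^2(1+a)(2a^2+b-1)=(a-1)(b-1)(b-a)\bigl[(2a+1)b+a\bigr],
\]
which is non-negative for all $b\ge a>1$. So $g$ is monotone non-decreasing on $[\ell_1,\infty)$, not merely unimodal, and the quadratic $b^2-6a^2b+6a^2-1$ does not arise. Fortunately this makes the desired conclusion $g\ge g(\ell_1)=0$ immediate, and your (correct) limit computation $g(\infty)\ge 0$ becomes unnecessary.

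For comparison, the paper proves the equivalent inequality $S\le X_{22}^1/2$ by a direct algebraic identity rather than calculus: it writes $\sinh(X_{22}^1/2)-\sinh S$ over a common denominator and recognizes the numerator as a perfect square, which also pins down the equality case $\ell_1=\ell_2$.
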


\begin{proof} By Definition \ref{arcspectrum}, disks $U$ and $U'$ of radius $r$, centered at the feet of $\lambda_1$, are embedded in $\partial N$ without overlapping each other if and only $r\leq d_{11}/2$.  Similarly, disks $V$ and $V'$ of radius $s$, centered at the feet of $\lambda_2$, are embedded in $\partial N$ without overlapping if and only if $s\leq d_{22}/2$.  Finally, if $r+s\leq d_{12}$ then $U\cup U'$ does not overlap $V\cup V'$.

Lemma \ref{boundary arc lengths} and the definitions above imply that $R \leq d_{11}/2$ and $R+S\leq d_{12}$.  We will show below that $S\leq X_{22}^1/2 \leq d_{22}/2$, hence by the paragraph above that the lemma holds.  Applying (\ref{Xijk}) and the ``angle addition formula'' for hyperbolic sine yields:\begin{align}
  & \sinh S = \frac{\sqrt{(2\cosh^2\ell_1+\cosh\ell_2-1)(2\cosh\ell_1-1)}-\cosh\ell_1\sqrt{\cosh\ell_2+1}}{(\cosh\ell_1-1)\sqrt{2(\cosh\ell_1+1)(\cosh\ell_2-1)}}  \label{sinch ess}\\
  & \sinh \left(X_{22}^1/2\right) = \frac{\sqrt{\cosh\ell_1+1}}{\sqrt{2}\sinh\ell_2} \nonumber \end{align}
Subtracting $\sinh S$ from $\sinh \left(X_{22}^1/2\right)$ and using the common denominator $\sqrt{2}\sinh\ell_2(\cosh\ell_1-1)\sqrt{\cosh\ell_1+1}$, we find that the numerator of $\sinh \left(X_{22}^1/2\right) - \sinh S$ is as below: \begin{align*} 
    \cosh^2\ell_1-1 + \cosh\ell_1(\cosh\ell_2+1) - 2\sqrt{\left(\cosh^2\ell_1+\frac{\cosh\ell_2-1}{2}\right)\left(\cosh\ell_1-\frac{1}{2}\right)(\cosh\ell_2+1)}  \\
   =\ \ \left(\sqrt{\cosh^2\ell_1+\frac{\cosh\ell_2-1}{2}} - \sqrt{\left(\cosh\ell_1-\frac{1}{2}\right)(\cosh\ell_2+1)}\right)^2  \end{align*}
Therefore $\sinh \left(X_{22}^1/2\right) - \sinh S \geq 0$.  Setting the above equal to zero and solving the resulting equation, we obtain $\cosh\ell_1=\cosh\ell_2$.\end{proof}

The main result of this subsection is the following improvement on Lemma 2.9 of \cite{DeSh}.  In introducing it we recall that Kojima--Miyamoto proved that $\cosh\ell_1\geq \frac{3+\sqrt{3}}{4}$ for every compact hyperbolic $3$-manifold with connected, totally geodesic boundary of genus two \cite[Corollary 3.5]{KM}.  Their proof again carries through to the current setting.

\begin{proposition}\label{KM ell_2} For $R$ as in Lemma \ref{disk radii} let $R'$ satisfy $\cosh R' = 3-\cosh R$ and define a function $E$ of $\ell_1$ by:\begin{align}
  & \cosh E = 1+\frac{2}{\cosh^2(R+R')\cdot\tanh^2\ell_1-1}   \label{E}
\end{align}
$E$ is decreasing for $\frac{3+\sqrt{3}}{4}\leq \cosh \ell_1 \leq 1.4$.  For $R''$ determined by $\cosh R'' = \frac{1}{2\sin (\pi/9)}=1.4619...$, define a quantity $M$ that depends on $\ell_1$ by \begin{align}
  & \cosh M = \sqrt{1+\frac{\cosh \ell_1+1}{\cosh(2R'') - 1}}  \label{M}  \end{align}
For a finite-volume hyperbolic $3$-manifold $N$ with compact, connected totally geodesic boundary of genus $2$, $\ell_2\geq\max\{\ell_1,E,M\}$.
\end{proposition}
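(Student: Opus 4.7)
The plan is to verify the three inequalities $\ell_2 \geq \ell_1$, $\ell_2 \geq E$, and $\ell_2 \geq M$ separately, and then check the monotonicity claim for $E$. The first inequality is immediate from Definition \ref{orthospectrum}, since the orthospectrum is indexed in non-decreasing order.

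For $\ell_2 \geq E$, my approach is an area count. By Gauss--Bonnet the area of $\partial N$ is $4\pi$. Lemma \ref{disk radii} supplies four disjoint embedded disks in $\partial N$: two of radius $R$ centered at the feet of $\lambda_1$ and two of radius $S = X_{12}^1 - R$ centered at the feet of $\lambda_2$ (at least when $S > 0$). A hyperbolic disk of radius $\rho$ has area $2\pi(\cosh\rho - 1)$, so the total area bound becomes $4\pi(\cosh R - 1) + 4\pi(\cosh S - 1) \leq 4\pi$, which rearranges to $\cosh S \leq 3 - \cosh R = \cosh R'$. Hence $S \leq R'$ and $X_{12}^1 \leq R + R'$. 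Inverting the bound $\cosh X_{12}^1 \leq \cosh(R + R')$ using the formula (\ref{X121}) is a routine rearrangement that yields $\cosh\ell_2 \geq \cosh E$ with $\cosh E$ as in (\ref{E}). The case $S \leq 0$ gives $X_{12}^1 \leq R \leq R + R'$ automatically.

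For $\ell_2 \geq M$, the plan is to invoke a Böröczky-type disk-packing bound in the style of \cite{KM}: two disjoint embedded disks of radius $r$ in a closed hyperbolic surface of genus $2$ force $r \leq R''$, with $R''$ corresponding to an optimal symmetric configuration whose vertex-angle arithmetic produces the $\pi/9$. Applied to two disks of radius $X_{22}^1/2$ at the feet of $\lambda_2$, which embed in $\partial N$ by an argument parallel to Lemma \ref{disk radii} using $d_{22} \geq X_{22}^1$ from Lemma \ref{boundary arc lengths}, this gives $X_{22}^1 \leq 2R''$. Using the identity $\cosh(2R'') - 1 = 2\sinh^2 R''$, the definition (\ref{M}) rearranges to $\sinh M \cdot \sinh R'' = \cosh(\ell_1/2)$; together with the identity $\sinh\ell_2 \cdot \sinh(X_{22}^1/2) = \cosh(\ell_1/2)$ that appears in the proof of Lemma \ref{disk radii}, the bound $X_{22}^1/2 \leq R''$ then forces $\sinh\ell_2 \geq \sinh M$, hence $\ell_2 \geq M$.

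For the monotonicity claim, since $\cosh$ is increasing it suffices to check that $\cosh^2(R + R')\tanh^2\ell_1 - 1$ is increasing in $\ell_1$ on the stated range. Here $R$ decreases and $R'$ increases in $\ell_1$ (from their defining relations, using that $\cosh R'$ is pinned to $3 - \cosh R$), while $\tanh\ell_1$ increases, so the competing monotonicities must be weighed by direct differentiation, likely aided by a numerical sanity check on the narrow interval $\frac{3+\sqrt{3}}{4} \leq \cosh\ell_1 \leq 1.4$. I expect this calculus verification, along with the careful adaptation of the Kojima--Miyamoto density argument to justify $X_{22}^1 \leq 2R''$ in this setting, to be the main technical obstacles; the area-based bound giving $E$ is by comparison essentially formal.
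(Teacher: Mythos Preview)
Your argument for $\ell_2\ge E$ via the four-disk area count is exactly the paper's argument, and your handling of $\ell_2\ge\ell_1$ is of course correct.  The monotonicity of $E$ is simply cited by the paper from \cite[Lemma~3.4]{DeSh}; you propose to redo the calculus, which is fine in principle but is left entirely open in your write-up.

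The real problem is the bound $\ell_2\ge M$.  Your plan is to pack two disks of radius $X_{22}^1/2$ at the feet of $\lambda_2$ and invoke a B\"or\"oczky-type inequality to force $X_{22}^1/2\le R''$.  But $R''$ is \emph{not} the extremal radius for a packing by two equal disks in a genus-$2$ surface: a short computation shows that $\cosh R''=1/(2\sin(\pi/9))$ is exactly the B\"or\"oczky density bound for \emph{four} equal disks (the extremal equilateral triangle then has vertex angle $2\pi/9$, and $\cosh 2r=\cos(2\pi/9)/(1-\cos(2\pi/9))$ rearranges to $\cosh r=1/(2\sin(\pi/9))$).  The two-disk density bound gives instead $\alpha=\pi/6$ and $\cosh 2r=3+2\sqrt3$, so $\cosh r\approx1.94$, which is much larger than $\cosh R''\approx1.462$.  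Thus your key inequality $X_{22}^1\le 2R''$ does not follow from the argument you sketch.

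This is why the paper does not argue directly.  It cites \cite[Lemma~2.9]{DeSh}, which on the range $\cosh\ell_1\le\cos(2\pi/9)/(2\cos(2\pi/9)-1)$ produces a lower bound $\ell_2\ge\min\{L,M\}$ using a more delicate packing that brings the feet of both $\lambda_1$ and $\lambda_2$ into play.  The paper then observes that $L$ is decreasing and $M$ increasing on this interval with $L=M$ at the right endpoint (where also $M=\ell_1$), so $\min\{L,M\}=M$ there; and that $M\le\ell_1$ beyond that endpoint, so $\ell_2\ge\ell_1\ge M$ holds trivially outside the range.  To repair your argument you would need to replace the two-disk step with something equivalent to this.
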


\begin{remark}  This combines and improves Lemmas 2.8 and 2.9 of \cite{DeSh}, of which Lemma 2.8 re-recorded arguments in \cite{KM} (in and around Lemmas 4.2 and 4.3 there), and Lemma 2.9 gave a new, improved bound in a subinterval.  Of the functions $E$, $F$, $L$, and $M$ there, the work in Lemma \ref{disk radii} above implies that $F\geq E$, and we argue directly below that $L\geq M$.\end{remark}

\begin{proof}  If disks $U$ and $U'$, of radius $R$, and disks $V$ and $V'$ of radius $S$ are all embedded in $\partial N$ without overlapping then the sum of their areas is less than the area of $\partial N$, which is $4\pi$ by the Gauss--Bonnet theorem.  We obtain the following inequality:
$$ 4\pi(\cosh R -1) + 4\pi(\cosh S -1) \leq 4\pi, \quad\Rightarrow\quad \cosh S \leq 3 - \cosh R $$
Thus for $R'$ as defined above, $S \leq R'$. Now taking $R$ and $S = X_{12}^1- R$ as defined in Lemma \ref{disk radii}, and applying that result's conclusion, we find that $X_{12}^1 \leq R + R'$.  Since $X_{12}^1$ is decreasing in $\ell_2$ (recall Lemma \ref{ordering external lengths}), the upper bound on $X_{12}^1$ determines a lower bound on $\cosh\ell_2$.  Setting $X_{12}^1$ equal to $R+R'$ and solving for $\cosh\ell_2$ yields formula (\ref{E}) for $\cosh E$.  It was proved in \cite[Lemma 3.4]{DeSh} that $E$, so defined, decreases for $\frac{3+\sqrt{3}}{4}\leq \cosh \ell_1 \leq 1.4$.

It was proved in \cite[Lemma 2.9]{DeSh} that $\ell_2$ is also bounded below by $\min\{L,M\}$, for $\ell_1$ satisfying:\begin{align*}
  & \cosh \ell_1 \leq \frac{\cos (2\pi/9)}{2\cos (2\pi/9) - 1} = 1.43969... \end{align*}
where $L$ and $M$ are respectively defined as functions of $\ell_1$ in formulas (2.9.2) and (2.9.3) there.  (The formula for $M$ is reproduced above in (\ref{M}).)  We need only observe that $L$ is decreasing and $M$ is increasing as functions of $\ell_1$, and they agree at the right endpoint $\cos(2\pi/9)/(2\cos(2\pi/9)-1)$ of the relevant interval, to conclude that $\min\{L,M\} = M$ here.

Substituting $\cosh\ell_1 = \cos(2\pi/9)/(2\cos(2\pi/9)-1)$ in (\ref{M}), then simplifying, shows that $M = \ell_1$ here. For $\ell_1$ larger than this value, $M < \ell_1$, and it is true by definition that $\ell_2\geq\ell_1$, so on this interval we also have $\ell_2\geq\max\{\ell_1,E,M\}$.\end{proof}

\begin{remark}  It is worth noting just how far from sharp the lower bound on $\ell_2$ given by $E$ is at $\cosh\ell_1 = \frac{3+\sqrt{3}}{4}$.  For $N$ with this value of $\ell_1$, $\partial N$ decomposes into equilateral triangles of side length $\cosh^{-1}(3+2\sqrt{3})$, and an explicit calculation gives $\cosh\ell_2 = \frac{13+9\sqrt{3}}{4} \simeq 7.147$.  On the other hand, $\cosh E \simeq 2.893$ here.\end{remark}

\subsection{A bound for $\ell_1$ in the absence of a $(1,1,1)$-hexagon}\label{DeSh bounds}  
Proposition 3.9 of \cite{DeSh} bounds the first ortholength $\ell_1$ of a finite-volume hyperbolic $3$-manifold $N$ with compact totally geodesic boundary below by $\cosh\ell_1\geq 1.215$, assuming $\widetilde{N}$ has no $(1,1,1)$-hexagon.  This is significantly better than the sharp universal lower bound of $\frac{3+\sqrt{3}}{4}\simeq 1.183$ for $\cosh\ell_1$ proved in \cite{KM}.  (For comparison, all manifolds with volume less than $7.63$ in the census of Petronio et.~al.~have $\cosh\ell_1<1.213$.)

The main observation behind this result is simply that in the absence
of a $(1,1,1)$-hexagon, the arc length $d_{11}$ on $\partial N$ is
bounded below by $X_{11}^2$ instead of $X_{11}^1$;
this follows from the first assertion of Lemma \ref{boundary arc lengths}.
Here we will recast this observation to give an upper bound on $\ell_2$ in terms of $\ell_1$, and also improve the absolute lower bound of $1.215$ for $\cosh\ell_1$, in this setting.

\begin{proposition}\label{DeSh ell_2}  Let $N$ be a finite-volume hyperbolic $3$-manifold with compact, connected, totally geodesic boundary of genus two.  If $\widetilde{N}$ has no $(1,1,1)$-hexagon then
$$ \cosh\ell_2 \leq (2+2\sqrt{3})\sinh^2\ell_1 -1, $$
and $\cosh\ell_1 > 1.23$.\end{proposition}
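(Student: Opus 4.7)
The plan is to upgrade the four-disk packing argument of Lemma~\ref{disk radii} using the improved lower bound $d_{11} \ge X_{11}^2$ that follows from Lemma~\ref{boundary arc lengths} under the no-$(1,1,1)$-hexagon hypothesis (since the smallest $k$ with a $(1,1,k)$-hexagon is then at least $2$). Concretely, I would set $R_2 = X_{11}^2/2$ in place of $R = X_{11}^1/2$ and $S_2 = X_{12}^1 - R_2$ in place of $S$. Because $d_{11} \ge 2R_2$, two disjoint embedded disks of radius $R_2$ fit around the feet of $\lambda_1$; and since the inequality $S \le X_{22}^1/2$ established in Lemma~\ref{disk radii} only strengthens when $R$ is enlarged, the same proof shows $S_2 \le X_{22}^1/2$, so two further disjoint disks of radius $S_2$ fit around the feet of $\lambda_2$ and are disjoint from the first pair, provided $S_2 \ge 0$ (equivalently $\cosh\ell_2 \le 2\cosh\ell_1+1$, which one checks from the hexagon formulas). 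Gauss--Bonnet applied to the genus-$2$ surface $\partial N$ then yields
\[
\cosh R_2 + \cosh S_2 \le 3. \qquad (*)
\]

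Next I would convert $(*)$ into the target bound on $\cosh\ell_2$. Substituting $\cosh R_2 = \sqrt{(\cosh X_{11}^2+1)/2}$, expanding $\cosh S_2 = \cosh X_{12}^1\cosh R_2 - \sinh X_{12}^1 \sinh R_2$, and invoking the explicit formulas (\ref{X112}) and (\ref{X121}) for $\cosh X_{11}^2$ and $\cosh X_{12}^1$ in terms of $x=\cosh\ell_1$ and $y=\cosh\ell_2$, one isolates the square-root term and squares to produce a polynomial inequality in $x,y$. The target simplification is $\cosh X_{11}^2 \le 3 + 2\sqrt 3$, which via~(\ref{X112}) is exactly $\cosh\ell_2 \le (2+2\sqrt 3)\sinh^2\ell_1 - 1$. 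The value $3+2\sqrt 3$ is the $\cosh$ of the side length of an equilateral hyperbolic triangle with vertex angle $\pi/6$, matching the degenerate regular tiling of $\partial N$ by such triangles at the extremal configuration; this identifies the cleanest polynomial consequence of $(*)$ to aim for.

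For the second assertion, $\cosh\ell_1 > 1.23$, I would combine the upper bound on $\cosh\ell_2$ just proved with the lower bound $\cosh\ell_2 \ge \cosh E$ from Proposition~\ref{KM ell_2}. Both sides are explicit functions of $x$; checking directly that the combined inequality $\cosh E \le (2+2\sqrt 3)\sinh^2\ell_1 - 1$ fails for $x \le 1.23$ yields the improvement over the previous bound $1.215$ of~\cite[Prop.~3.9]{DeSh}.

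The main obstacle is the algebraic simplification carrying $(*)$ into the closed form $\cosh X_{11}^2 \le 3 + 2\sqrt 3$: the nested radicals from $\cosh(X_{12}^1-R_2)$ make the direct polynomial expansion unwieldy. A useful reformulation is the product-to-sum identity
\[
\cosh R_2 + \cosh S_2 \;=\; 2\cosh(X_{12}^1/2)\cosh(X_{12}^1/2 - R_2),
\]
which collapses some of the radical structure of $(*)$ and should ease the bookkeeping. A secondary issue is handling the regime $\cosh\ell_2 > 2\cosh\ell_1+1$, where $S_2 < 0$ and the four-disk configuration degenerates; there one expects the conclusion to follow from the two-disk packing at the feet of $\lambda_1$ alone, possibly combined with the analogous inequality $d_{22}\ge X_{22}^1$.
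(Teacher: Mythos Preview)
Your approach has a genuine gap in the first assertion. The target inequality $\cosh X_{11}^2 \le 3+2\sqrt{3}$ (equivalently $\cosh d_{11} \le 3+2\sqrt{3}$) does not follow from a Gauss--Bonnet area bound as you propose. The two-disk area bound on disks of radius $R_2 = X_{11}^2/2$ alone yields only $\cosh R_2 \le 2$, i.e.\ $\cosh X_{11}^2 \le 7$, which is strictly weaker than $3+2\sqrt{3}\approx 6.46$. Adding the two $S_2$-disks might tighten this somewhat when $S_2>0$, but you have not shown that $(*)$ actually implies the target bound, and in the regime $S_2<0$---which you yourself flag as unresolved---the four-disk packing is unavailable and you are back to the insufficient two-disk area bound. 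The constant $3+2\sqrt{3}$ is precisely the B\"or\"oczky density bound for packing two equal disks into a genus-$2$ surface; it is sharper than what any naive area comparison can give.

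The paper's proof is much more direct: it simply invokes B\"or\"oczky's theorem (as recorded in \cite[Cor.~3.5]{KM} and \cite[Lemma~2.7]{DeSh}) to obtain $\cosh d_{11} \le 3+2\sqrt{3}$ unconditionally for any such $N$. Combining this with $d_{11} \ge X_{11}^2$ from Lemma~\ref{boundary arc lengths} (the no-$(1,1,1)$-hexagon hypothesis), and substituting formula~(\ref{X112}), one solves for $\cosh\ell_2$ and obtains the stated upper bound in one line. Your strategy for the second assertion---comparing this increasing upper bound on $\cosh\ell_2$ against the decreasing lower bound $\cosh E$ from Proposition~\ref{KM ell_2} and checking numerically that they are incompatible at $\cosh\ell_1 = 1.23$---matches the paper's proof exactly.
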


\begin{proof}  It is a consequence of Bor\"oczky's theorem recorded in Corollary 3.5 of \cite{KM} and Lemma 2.7 of \cite{DeSh} that $\cosh d_{11}\leq 3+2\sqrt{3}$ with the hypotheses above.  (As usual, this is stated in \cite{DeSh} for $N$ compact, but it extends without revision to the finite-volume case.)  Using the fact that $d_{11}$ is at least $X_{11}^2$ in the absence of $(1,1,1)$-hexagons, substituting the right-hand side of formula (\ref{X112}) for $\cosh d_{11}$ in the inequality above, and solving for $\cosh\ell_2$ gives the upper bound on $\ell_2$ in terms of $\ell_1$.

For the absolute lower bound on $\cosh\ell_1$, we note that the upper bound we have just proved for $\ell_2$ is an increasing function of $\ell_1$, whereas the lower bound $E$ of Proposition \ref{KM ell_2} is decreasing.  Direct computation shows that $E$ takes the value $1.200...$ and the upper bound the value $1.194...$ when $\cosh\ell_1 = 1.23$ (and their values coincide when $\cosh\ell_1 \approx 1.2304$). Thus if there is no $(1,1,1)$-hexagon and $\cosh\ell_1\leq 1.23$, the lower bound on $\ell_2$ exceeds the upper bound, a contradiction.\end{proof}

\section{More Better Muffins}\label{bound muffin}

A ``muffin'', 
so named by Kojima and Miyamoto in \cite{KM} (see the discussion below Proposition 3.1 of
that paper), 
is a member of a certain class of hyperbolic solids of rotation, shown
in \cite{KM} to embed in a compact hyperbolic three-manifold with
totally geodesic boundary and used in the main volume bound 
of \cite{KM}. 
We used the same class of muffins from \cite{KM} in \cite{DeSh},
denoting them as ``$\mathrm{Muf}_{\ell_1}$'' 
in Definition 3.1 there. As this notation suggests, members of this class are determined up to isometry by a single parameter, which in their application to volume bounds is the length $\ell_1$ of the shortest return path. Here we will continue to use the notation of \cite{DeSh} for this class of muffins 
(see Definitions and Remarks \ref{M ell one} below).

This section introduces a more general class of muffins, still
hyperbolic solids of rotation, but now depending on two parameters
$\ell$ and $R$ which are side lengths of certain reflectively
symmetric hyperbolic pentagons. 
In Section \ref{bound vol} we will use muffins together with collars to give lower bounds on volume for a hyperbolic $3$-manifold $N$ with totally geodesic boundary satisfying certain bounds on the lengths of $\ell_1$ and $\ell_2$. Here we 
formally define these objects and establish technical results --- for muffins in Section \ref{emb muf} and collars in Section \ref{emb col} --- that will allow us to show that they are disjointly embedded in $N$.

\subsection{Muffins}\label{emb muf} Given
 $\ell, R >0$, let $Q$ be a
hyperbolic \textit{Lambert quadrilateral}---one with three right
angles---such that its sides having right angles at both endpoints are
of lengths $\ell/2$ and $R$. Doubling $Q$ across its edge $\omega$
opposite the one $\rho$ with length $R$ yields a reflectively
symmetric hyperbolic pentagon $P$ with four right angles. The
\textit{base} $\lambda$ of $P$---the side opposite the non-right
vertex---has length $\ell$, and each of the two sides intersecting it
have length $R$. 
A
 \textit{muffin} is the solid in $\mathbb{H}^3$ obtained by rotating $P$ around $\lambda$.  We illustrate this construction in Figure \ref{muffin} and formalize its definition below. 

\begin{figure}
\input{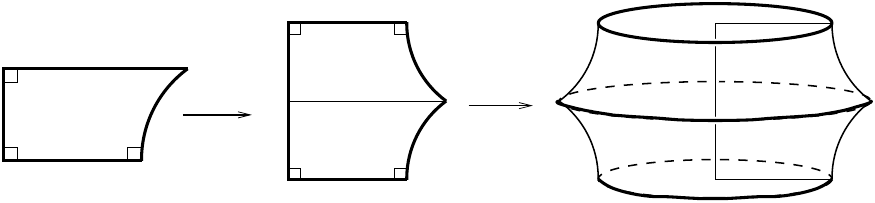_t}
\caption{Making a muffin.}
\label{muffin}
\end{figure}

\begin{definition}\label{M ell R} The \textit{muffin} $M(\ell,R)$ with \textit{height} $\ell$ and \textit{cap radius} $R$ is the solid that results from rotating a reflectively symmetric pentagon $P$ with four right angles about its base $\lambda$ (the side opposite its not-necessarily-right angle, labeled in Figure \ref{muffin}), where $\ell$ and $R$ are respectively the lengths of $\lambda$ and of the sides of $P$ adjacent to $\lambda$---the side of $\rho$ of $Q$ in Figure \ref{muffin}, and its mirror image. We say that $M(\ell,R)$ is \textit{centered at $\lambda$}. Its \textit{caps} are the disks of radius $R$ obtained by rotating $\rho$ around its vertex $\rho\cap\lambda$ and likewise for its mirror image.
\end{definition}

We further say that the muffin has \textit{waist radius} $W$, the length of the side $\omega$ of $Q$ in Figure \ref{muffin}, and \textit{side altitude} $A$, where $A$ is the length of the final side of $Q$.  These lengths are related by the ``quadrilateral rule'' of hyperbolic trigonometry as follows:  \begin{align}\label{muffin relations}
  & \tanh A = \cosh R \tanh(\ell/2)  && \tanh W = \cosh(\ell/2)\tanh R  \end{align}
It follows that $M(\ell,R)$ is determined up to isometry by $\ell$ and $R$, or by $\ell$ and $W$.

\begin{definitionsremarks}\label{thog} For any $n\geq 2$ and any
  totally geodesic subspace $\Pi\subset\mathbb{H}^n$, there is a
  retraction 
$\pi\co \mathbb{H}^n\to\Pi$ 
that sends each point of $\mathbb{H}^n$ to its unique nearest point in $\Pi$. (See \cite{BO'Ne}, particularly Lemma 3.2 there.) We call this map the \textit{orthogonal projection} to $\Pi$. It has the property that for each $x\in \Pi$, $\pi^{-1}(x)\subset\mathbb{H}^n$ is a totally geodesic subspace of complementary dimension to $\Pi$, intersecting $\Pi$ orthogonally in $\{x\}$.
\end{definitionsremarks}

We give an explicit formula for the orthogonal projection to a certain plane $\Pi\subset\mathbb{H}^3$ in the proof of Proposition \ref{thog proj}.

Each assertion about muffins recorded 
in the Fact 
below follows from their construction as solids of rotation by an exercise in hyperbolic geometry.

\begin{numberedfact}\label{basic muffin} For any disjoint pair of totally
  geodesic planes $\Pi_1,\Pi_2\subset \mathbb{H}^3$ that attain a
  minimum distance $\ell >0$, and any $R>0$, the copy $M$ of
  $M(\ell,R)$ centered on the shortest geodesic arc joining $\Pi_1$ to
  $\Pi_2$ is contained in the convex set bounded by $\Pi_1$, and
  $\Pi_2$ and intersects $\Pi_1\cup\Pi_2$ 
precisely 
in the union of its caps. For such a muffin $M$:\begin{itemize}
	\item Every point of $M$ is 
at a distance at most $W$ from 
$\lambda$, where $\lambda$ and $W$ are respectively the center and waist radius of $M$. The interior of $M$ is contained in the open $W$-neighborhood of $\lambda$.
	\item For $i = 1$ or $2$, taking $C_i$ to be the cap of $M$ contained in $\Pi_i$ and $\pi_i\co\mathbb{H}^3\to\mathbb{H}^3$ to be the orthogonal projection to $\Pi_i$, we have $M\subset\pi_i^{-1}(C_i)$.
\end{itemize}
\end{numberedfact}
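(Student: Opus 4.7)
My plan is to realize $M$ explicitly as a solid of revolution about $\lambda$ and to exploit the rotational symmetry of the ambient configuration. Because $\Pi_1$ and $\Pi_2$ are disjoint and realize positive minimum distance $\ell$, they admit a unique common perpendicular geodesic arc $\lambda$, with endpoints $v_\lambda^i \in \Pi_i$. Choose any totally geodesic plane $\Sigma$ containing $\lambda$; since $\lambda \perp \Pi_i$, we have $\Sigma \perp \Pi_i$, and $\gamma_i := \Sigma \cap \Pi_i$ is a geodesic in $\Sigma$ meeting $\lambda$ perpendicularly at $v_\lambda^i$. Construct the pentagon $P$ of Definition \ref{M ell R} inside $\Sigma$ with base $\lambda$ and $R$-sides $\rho_i$ along $\gamma_i$ from $v_\lambda^i$; then $M$ is the image of $P$ under the $S^1$-action rotating $\mathbb{H}^3$ about $\lambda$. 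Crucially, because each $\Pi_i$ is perpendicular to $\lambda$, this rotation setwise preserves $\Pi_i$ (fixing $v_\lambda^i$) and hence also the closed convex slab $B$ bounded by $\Pi_1 \cup \Pi_2$.

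For the main assertions of the statement: $P$ is convex (its four right angles plus an apex angle at $v_4$ that is twice the Lambert quadrilateral's non-right angle, hence less than $\pi$), all of whose vertices lie in $B$, so $P \subset B$ and hence $M \subset B$. The only boundary edges of $P$ meeting $\Pi_i$ are $\rho_i$, and $P \cap \Pi_i = \rho_i$; rotating $\rho_i$ about $\lambda$ sweeps out precisely the disk of radius $R$ in $\Pi_i$ centered at $v_\lambda^i$, which is $C_i$. So $M \cap \Pi_i = C_i$ for each $i$, and these are all the points of $M$ in $\Pi_1 \cup \Pi_2$.

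For the first bullet: distance to the geodesic $\lambda$ is a convex function on $\Sigma$, so its maximum over the convex $P$ is attained at a vertex. The vertices $v_2,v_2' \in \lambda$ give distance $0$; $v_3,v_3' \in \gamma_i$ give $R$; and the apex $v_4$, lying at the far end of the perpendicular bisector $\omega$ of $\lambda$, gives $W$. Since $\tanh W = \cosh(\ell/2)\tanh R > \tanh R$ for $\ell > 0$, we have $W > R$, so the maximum is exactly $W$ and is attained only at $v_4$. Rotation preserves $d(\cdot,\lambda)$, so $M$ lies in the closed $W$-neighborhood of $\lambda$. Every point of the interior of $M$ is a rotation either of a point of the interior of $\lambda$ (distance $0$) or of a point of the interior of $P$ (distance strictly less than $W$, since the maximum is only attained at $v_4 \in \partial P$), so the interior of $M$ lies in the open $W$-neighborhood.

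For the second bullet, by equivariance of $\pi_i$ under rotation about $\lambda$ (which preserves $\Pi_i$ and $C_i$ setwise), it suffices to show $\pi_i(p) \in C_i$ for every $p \in P$. Because $\Sigma \perp \Pi_i$ along $\gamma_i$, the restriction $\pi_i|_\Sigma$ coincides with orthogonal projection in $\Sigma$ onto $\gamma_i$. The preimage $(\pi_i|_\Sigma)^{-1}(\rho_i)$ is the convex region of $\Sigma$ bounded by the two geodesics perpendicular to $\gamma_i$ at the endpoints of $\rho_i$, namely $\lambda$ (at $v_\lambda^i$) and the extension of the $A$-side through the far end $v_3^{(i)}$, which is perpendicular to $\gamma_i$ at $v_3^{(i)}$ by the right angle of the Lambert quadrilateral there. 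By convexity of both $P$ and this strip, it suffices to check inclusion on the five vertices of $P$; four of them lie on the strip boundary, and the only substantive check is for the opposite-end vertex $v_3^{(j)}$, $\{i,j\}=\{1,2\}$. An application of the Lambert-quadrilateral identity $\tanh d = \tanh R /\cosh \ell$ to the quadrilateral formed by $\lambda$, $\rho_j$, and the perpendicular from $v_3^{(j)}$ to $\gamma_i$ shows its foot lies at distance $d < R$ from $v_\lambda^i$, hence within $\rho_i$. The main technical work is really just bookkeeping the pentagonal configuration carefully; no ideas beyond standard hyperbolic trigonometry are required.
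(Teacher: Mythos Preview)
Your proof is correct. The paper does not actually supply a proof of this Fact---it simply records that each assertion ``follows from their construction as solids of rotation by an exercise in hyperbolic geometry''---and your argument (reduce via the rotational symmetry to the planar pentagon $P$ in a slice $\Sigma$, then use convexity of $P$ together with convexity of the distance to the axis for the first bullet, and a vertex check plus a Lambert-quadrilateral identity for the second) is precisely the natural exercise the authors have in mind.
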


The proof of the following fact about right-angled hexagons uses 
the case $n=2$ of  orthogonal projection: for any hyperbolic geodesic
$ \lambda\subset \mathbb{H}^2$ we have an orthogonal projection
$\pi:\HH^2\to\lambda$, and $\pi^{-1}(x)\subset\mathbb{H}^2$ is also a
geodesic for any $x\in \lambda$.

\begin{proposition}\label{RAH} For any right-angled hexagon $C$ in
  $\mathbb{H}^2$, and 
any 
pair of opposite sides $\lambda$, $\lambda'$ of $C$, there is a geodesic arc $\delta\subset C$ that meets each of $\lambda$ and $\lambda'$ at right angles at a point in its interior.\end{proposition}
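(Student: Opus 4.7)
The plan is to take $\delta$ to be the common perpendicular of the full geodesics $\ell, \ell'$ extending $\lambda, \lambda'$ in $\mathbb{H}^2$, and then verify that its feet lie in the interiors of these edges and that $\delta$ itself lies inside $C$.

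To show $\ell$ and $\ell'$ are ultraparallel I would label the edges of $C$ cyclically $e_1 = \lambda, e_2, e_3, e_4 = \lambda', e_5, e_6$ and let $\ell_i$ denote the geodesic extending $e_i$. Because consecutive edges meet at right angles, each $e_i$ is a common-perpendicular segment between $\ell_{i-1}$ and $\ell_{i+1}$; in particular $\ell_4 = \ell'$ is ultraparallel to both $\ell_2$ (via $e_3$) and $\ell_6$ (via $e_5$), and hence lies strictly in the open half-planes $H_2^+, H_6^+$ of $\ell_2, \ell_6$ that contain $C$. Since $\ell \cap H_2^+ \cap H_6^+$ is the open edge $\mathrm{int}(\lambda)$, and $\ell' \cap C = \lambda'$ is disjoint from $\lambda$ (opposite edges share no vertex), it follows that $\ell \cap \ell' = \emptyset$. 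One also checks that the two ideal endpoints of $\ell$ lie in $\overline{H_2^-}$ and $\overline{H_6^-}$ respectively---since $\ell$ meets $\ell_2, \ell_6$ perpendicularly rather than asymptotically---while the ideal endpoints of $\ell'$ lie in $\overline{H_2^+} \cap \overline{H_6^+}$; so $\ell, \ell'$ share no points at infinity, making them ultraparallel. Their unique common perpendicular $\delta$ meets $\ell$ and $\ell'$ at points $p, p'$.

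The orthogonal projection $\pi \co \mathbb{H}^2 \to \ell$ has $\ell_2 = \pi^{-1}(a)$ and $\ell_6 = \pi^{-1}(b)$ as the fibers over the endpoints $a, b$ of $\lambda$, so $\pi(H_2^+ \cap H_6^+) = \mathrm{int}(\lambda)$. Applying this to $p' \in \ell' \subset H_2^+ \cap H_6^+$ yields $p = \pi(p') \in \mathrm{int}(\lambda)$, and symmetric reasoning gives $p' \in \mathrm{int}(\lambda')$. Since $C$ is convex as an intersection of six hyperbolic half-planes and both $p \in \lambda \subset C$ and $p' \in \lambda' \subset C$, the geodesic segment $\delta$ from $p$ to $p'$ is contained in $C$, completing the proof.

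The main obstacle is concentrated in the first step---the careful organization of the various ultraparallel relations induced by the right-angle structure of $C$ into the conclusion that $\ell$ and $\ell'$ are themselves ultraparallel, not merely disjoint. The feet-in-interior claim and the containment $\delta \subset C$ then follow quickly from standard facts about orthogonal projection and convexity in $\mathbb{H}^2$.
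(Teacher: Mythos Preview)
Your proof is correct, and it takes a genuinely different route from the paper's. The paper argues by the intermediate value theorem: it projects $\lambda'$ orthogonally to $\lambda$, considers for each point $P$ of the image the perpendicular segment $s_P\subset C$ from $\lambda$ to $\lambda'$, and tracks the angle that $s_P$ makes with $\lambda'$ using the hyperbolic law of cosines for a quadrilateral with two right angles; this angle is seen to pass from above $\pi/2$ to below $\pi/2$, so it equals $\pi/2$ somewhere.

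Your approach instead appeals to the classical fact that ultraparallel geodesics in $\mathbb{H}^2$ admit a unique common perpendicular, and then uses the right-angle structure of $C$ twice: first, the perpendicularity of $\ell$ with $\ell_2,\ell_6$ (and of $\ell'$ with $\ell_3,\ell_5$) gives the half-plane containments that force $\ell,\ell'$ to be ultraparallel; second, the same perpendicularity identifies the fibers of orthogonal projection over the endpoints of $\lambda$ (respectively $\lambda'$), which pins the feet of the common perpendicular to the interiors of the edges. Convexity of $C$ then finishes.

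Your argument is slightly more conceptual and avoids the trigonometric formula; it also identifies $\delta$ intrinsically as the common perpendicular of the extended geodesics. The paper's argument, by contrast, is more self-contained (it does not invoke the existence theorem for common perpendiculars) and stays entirely inside $C$ throughout. Both are short once the key observation is made; yours front-loads the work into establishing ultraparallelism, while the paper's front-loads it into the IVT setup.
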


\begin{proof}
Let $\pi$ 
denote 
the orthogonal projection of $\mathbb{H}^2$ to the geodesic containing $\lambda$.

Because 
$C$ is the intersection of  half-planes bounded by the 
geodesics 
containing its sides, and
the edges of $C$ that share endpoints with $\lambda$ are orthogonal to
$\lambda$, we have
$\pi(C) = \lambda$. 
Hence 
$\mu\doteq\rho(\lambda')$ is a subsegment of $\lambda$. For each point 
 $P$ of $\mu$, the hyperbolic 
 geodesic 
 $\pi^{-1}(P)$ 
contains a 
segment  $s_P\subset C$ which meets $\lambda$
perpendicularly at $P$ and has its other endpoint in $\lambda'$.
Let us now choose
an endpoint $P_0$ of $\lambda'$, let $c$ denote the length of $\mu$,
and for each $x\in(0,c]$ let $P_x$ denote the point of $\mu$ whose
distance from $P_0$ is $x$. For each $x\in[0,c]$ we set
$s_x=s_{P_x}$. For every $x\in(0,c]$ there is a hyperbolic
quadrilateral $Q_x\subset C$ whose sides are $s_0$, $s_x$, and
subsegments $\mu_x$ and $\lambda'_x$ of $\mu$ and $\lambda'$
respectively. The two  interior angles of
$Q_x$  incident to $\mu_x$ are right
angles. The  interior angle between $\lambda'_x$  and $s_0$ is independent of $x$, and will be denoted $\beta$,
while the   interior angle between $\lambda'_x$  and $s_x$ will be denoted
$\alpha(x)$. The angles $\beta$ and $\alpha(c)$ are each less than $\pi/2$, since they are sub-angles of vertex angles of $C$.

The side $\mu_x$ of $Q_x$ has length $x$; we will denote the length of
its side $s_0$ by $a$. 
The hyperbolic law of cosines for quadrilaterals with  two right
angles, which is stated in \cite[VI.3.3]{Fenchel}, gives:
\[ \cos\alpha(x) = - \cos \beta\cosh x + \sin\beta\sinh x\sinh a. \]

Noting that $\cos\alpha(x)\to -\cos\beta$ as $x \to 0$, so that
$\alpha(x)\to\pi-\beta >\pi/2$, and 
recalling that $\alpha(c) < \pi/2$, we conclude from the intermediate
value theorem that 
$\alpha(x_0) = \pi/2$ for some 
$x_0\in (0,c')$.
Hence the segment $\delta\doteq s_{x_0}\subset C$ is perpendicular to
both $\lambda$ and $\lambda'$.
\end{proof}

For a given hyperbolic manifold $N$ with totally geodesic boundary, we will embed a muffin in $N$---for carefully chosen $\ell$ and $R$---by checking that a copy of $M(\ell,R)$ carefully placed in the universal cover $\widetilde{N}$ does not intersect its translates under the action of $\pi_1(N)$ by covering transformations. Here as in the prior works \cite{KM} and \cite{DeSh}, ``carefully placed'' will always mean centered at a lift $\tilde\lambda$ of a return path $\lambda$, 
 with height $\ell$ equal to the length of $\lambda$ 
 so that each of its caps lies in a component of $\partial\widetilde{N}$. We use the following result to ensure that $M(\ell,R)$, with choices carefully made, lies \textit{in $\widetilde{N}$}.

\begin{lemma}\label{You 'kay?} 
Let  $N$ be a hyperbolic manifold with geodesic boundary, and let
$\widetilde{N}$ denote its universal cover. If  $\tilde\lambda_k$ is a
lift 
of the $k$th-shortest return path to $\widetilde{N}$, the distance from $\tilde\lambda_k$ to any component of $\partial\widetilde{N}$ that does not contain either of its endpoints is at least $U_k$ defined by $\sinh U_k = \cosh \ell_1/\sinh(\ell_k/2)$. Equality is attained if $\tilde\lambda_k$ is an edge of a $(1,1,k)$-hexagon in $\widetilde{N}$.\end{lemma}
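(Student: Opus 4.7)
The plan is to reduce the distance computation in $\HH^3$ to a two-dimensional problem in the common perpendicular plane of three boundary components, and then to apply hyperbolic trigonometry to the right-angled pentagons obtained by bisecting an $(i,j,k)$-hexagon.

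Let $\Pi_1,\Pi_2\subset\partial\widetilde N$ be the components containing the endpoints of $\tilde\lambda_k$, and let $\Pi_3$ be any other component. Since $\widetilde N$ is convex with geodesic planar boundary these three planes are pairwise disjoint, so Lemmas \ref{mutual perp} and \ref{rt ang hex} produce a right-angled hexagon $C\subset\widetilde N$ in a plane $\Pi$ perpendicular to each $\Pi_m$, whose internal edges are $\tilde\lambda_k$ together with lifts $\tilde\lambda_i,\tilde\lambda_j$ of return paths with $\ell_i,\ell_j\ge\ell_1$. Because $\Pi\perp\Pi_3$, the in-$\Pi$ perpendicular from any $p\in\tilde\lambda_k$ to the geodesic $L\doteq\Pi\cap\Pi_3$ is simultaneously perpendicular to $\Pi_3$ in $\HH^3$, so
\[d_{\HH^3}(\tilde\lambda_k,\Pi_3)=d_\Pi(\tilde\lambda_k,L).\]

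Proposition \ref{RAH} supplies an arc $\delta\subset C$ meeting both $\tilde\lambda_k$ and its opposite external edge $a_2\subset L$ perpendicularly at interior points; thus $\delta$ realizes the common perpendicular of the geodesic lines extending $\tilde\lambda_k$ and $a_2$, so $d_\Pi(\tilde\lambda_k,L)=|\delta|$. Cutting $C$ along $\delta$ produces two right-angled pentagons whose side-lengths in cyclic order are $(p_1,\ast,\ell_i,\ast,|\delta|)$ and $(p_2,\ast,\ell_j,\ast,|\delta|)$, where $p_1+p_2=\ell_k$ and the $\ast$ entries are the pieces of the external edges on $\Pi_1$, $\Pi_2$, and $\Pi_3$. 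The classical right-angled pentagon identity $\sinh s_n\sinh s_{n+1}=\cosh s_{n+3}$ (indices mod $5$), applied with $s_5=|\delta|$ in each pentagon, yields
\[\sinh|\delta|\cdot\sinh p_1=\cosh\ell_i,\qquad \sinh|\delta|\cdot\sinh p_2=\cosh\ell_j.\]

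Writing $p_1=\frac{\ell_k}{2}+t$ and $p_2=\frac{\ell_k}{2}-t$ and multiplying, the identity $\sinh(A+B)\sinh(A-B)=\sinh^2A-\sinh^2B$ gives
\[\sinh^2|\delta|\,\bigl(\sinh^2(\ell_k/2)-\sinh^2 t\bigr)=\cosh\ell_i\cosh\ell_j.\]
Dropping the nonnegative term $\sinh^2 t$ and using $\cosh\ell_i,\cosh\ell_j\ge\cosh\ell_1$, we conclude $\sinh^2|\delta|\ge\cosh^2\ell_1/\sinh^2(\ell_k/2)=\sinh^2 U_k$, so $|\delta|\ge U_k$ as claimed. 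If $\tilde\lambda_k$ is an edge of a $(1,1,k)$-hexagon then $\ell_i=\ell_j=\ell_1$, and the reflective symmetry of $C$ swapping $\Pi_1$ and $\Pi_2$ forces $t=0$, making both inequalities equalities.

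The main obstacle is bookkeeping: orienting the two half-pentagons so that $|\delta|$ consistently occupies the position ($s_5$) from which the lift $\tilde\lambda_i$ (respectively $\tilde\lambda_j$) is exactly three steps away around the cyclic pentagon identity, and confirming that Proposition \ref{RAH}'s interior-meeting arc really does realize the common perpendicular between the extended lines. Once the labeling is fixed, the symmetric substitution $p_m=\ell_k/2\pm t$ together with the monotonicity of $\cosh$ closes the argument with essentially no further computation.
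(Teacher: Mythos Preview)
Your proof is correct and follows essentially the same route as the paper's: both set up the $(i,j,k)$-hexagon via Lemmas \ref{mutual perp} and \ref{rt ang hex}, invoke Proposition \ref{RAH} to obtain the common perpendicular $\delta$, split $C$ into two right-angled pentagons, and apply the pentagon rule $\cosh\ell_i=\sinh|\delta|\sinh p_1$, $\cosh\ell_j=\sinh|\delta|\sinh p_2$. The only difference is the final algebraic step: the paper observes directly that $\ell_j\ge\ell_i$ forces $p_1\le\ell_k/2$, whence $\sinh|\delta|=\cosh\ell_i/\sinh p_1\ge\cosh\ell_1/\sinh(\ell_k/2)$, whereas you multiply the two pentagon equations and use $\sinh(A+t)\sinh(A-t)=\sinh^2A-\sinh^2t$ to reach the same bound. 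Your treatment is in fact slightly more explicit than the paper's about why $|\delta|$ equals the ambient distance $d_{\HH^3}(\tilde\lambda_k,\Pi_3)$.
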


\begin{proof} For $\tilde\lambda_k$ as above, let $\Pi_1$ and $\Pi_2$
  be the components of $\partial\widetilde{N}$ containing its
  endpoints. For a third component $\Pi_3$ of $\widetilde{N}$, let $C$
  be the planar
right-angled hexagon supplied by Lemma \ref{rt ang hex}, 
containing $\tilde\lambda_k$ and intersecting each $\Pi_i$ perpendicularly in a side, for $i=1$, $2$, $3$. 
 Then $C\cap \Pi_3$ is the side of $C$ opposite $\tilde\lambda_k$. 
 
We now apply Proposition \ref{RAH}, with $\tilde\lambda_k$ and
$C\cap\Pi_3$ 
playing 
the roles of $\lambda$ and $\lambda'$ 
in that proposition. 
Let $\delta\subset C$ be the arc 
supplied by Proposition \ref{RAH}. 
The hexagon $C$ is divided into two right-angled pentagons by $\delta$. Let us take the length of $\delta$ to be $y$ and $\ell_i$, $\ell_j$, $\ell_k$ the lengths of $\tilde\lambda_i$, $\tilde\lambda_j$, and $\tilde\lambda_k$, respectively, and $x_i$ and $x_j$ those of the sub-arcs of $\tilde\lambda_k$ in the pentagons containing $\tilde\lambda_i$ and $\tilde\lambda_j$. So $x_i + x_j = \ell_k$, and the ``law of sines'' for mostly-right-angled pentagons recorded in \cite[VI.3.2]{Fenchel} gives:
\[ \cosh \ell_i = \sinh x_i\sinh y\quad\mbox{and}\quad \cosh\ell_j = \sinh x_j\sinh y. \]
Supposing without loss of generality that $\ell_j\ge \ell_i$, the equations above combine to imply that $x_j \ge x_i$ and hence that  $x_i \leq \ell_k/2$. Using the first equation we now obtain:
\[ \sinh y = \frac{\cosh \ell_i}{\sinh x_i} \ge \frac{\cosh\ell_1}{\sinh(\ell_k/2)} \]
The inequality above comes from the fact above that $x_i \leq \ell_k/2$, using the default bound $\ell_i \ge \ell_1$ in the numerator. Noting that if $C$ is a $(1,1,k)$ hexagon then $\delta$ is its axis of reflective symmetry and hence $x_i = x_j = \ell_k/2$, we obtain the Lemma's final assertion about equality.
\end{proof}

We will use 
Lemma \ref{You 'kay?} 
to give a 
sufficient condition 
in Lemma \ref{no x bdry} below, 
for a copy of $M(\ell,r)$ to lie 
entirely in $\widetilde{N}$. 
First we formally establish the link between the muffin $\mathrm{Muf}_{\ell_1}$ 
originally defined in \cite{KM} and \cite[Dfn.~3.1]{DeSh}, and $M(\ell,R)$ defined 
in this paper.

\begin{definitionsremarks}\label{M ell one}
In the notation of Definition \ref{M ell R}, the muffin $\mathrm{Muf}_{\ell_1}$ 
originally defined in \cite{KM} and \cite[Dfn.~3.1]{DeSh} is 
$M(\ell_1,R(\ell_1))$, where $R = R(\ell_1)$ is given by the formula (\ref{R}). 
From (\ref{muffin relations}) we therefore obtain the following formulas determining its side altitude $A = A(\ell_1)$ and waist radius $W = W(\ell_1)$ as functions of $\ell_1$ only:\begin{equation}\label{waist ell1}\begin{split}
	\tanh A & = \cosh R \tanh(\ell_1/2) =  \sqrt{\frac{2\cosh\ell_1 - 1}{2\cosh\ell_1+2}} \\
	\tanh W & = \cosh (\ell_1/2)\tanh R = \sqrt{\frac{\cosh\ell_1+1}{4\cosh\ell_1-2}}  \end{split}\end{equation}
\end{definitionsremarks}

The geometric motivation for the choice of cap radius for 
$\mathrm{Muf}_{\ell_1}$ is captured in Figure 3.1 of \cite{KM}. 
The left side of that Figure pictures a $(1,1,1)$-hexagon in our notation, 
with ``$\ell$'' there equal to $\ell_1$ and ``$R$'' and ``$A$'' given by (\ref{R}) 
and (\ref{waist ell1}). The Fact below records a feature of that picture.

\begin{numberedfact}\label{UAW} For $U_1$ as in Lemma \ref{You 'kay?} 
and $A = A(\ell_1)$ and $W = W(\ell_1)$ as in \ref{waist ell1}, 
we have
$U_1 = A + W$.
\end{numberedfact}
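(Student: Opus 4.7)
The plan is to verify the identity $U_1 = A + W$ by direct hyperbolic trigonometric computation, reducing it to an algebraic identity in the single parameter $x = \cosh \ell_1$. A clean geometric interpretation also exists and is worth noting.

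First I would unpack all the defining quantities as explicit surds in $x$. From (\ref{R}), $\cosh R = \sqrt{(2x-1)/(2x-2)}$, whence $\sinh R$ and $\tanh R$. Combined with the standard half-angle values for $\ell_1/2$ and the formulas (\ref{waist ell1}), I obtain $\tanh A$ and $\tanh W$; applying $\sinh = \tanh/\sqrt{1-\tanh^2}$ and $\cosh = 1/\sqrt{1-\tanh^2}$ yields closed forms, for instance $\sinh A = \sqrt{(2x-1)/3}$, $\cosh A = \sqrt{(2x+2)/3}$, $\sinh W = \sqrt{(x+1)/(3(x-1))}$, and $\cosh W = \sqrt{2(2x-1)/(3(x-1))}$.

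Next I would expand $\sinh(A+W) = \sinh A\,\cosh W + \cosh A\,\sinh W$ using these. Both summands share a common factor of $\sqrt{2/(9(x-1))}$, and the polynomial residues $2x-1$ and $x+1$ add to $3x$, giving $\sinh(A+W) = x\sqrt{2/(x-1)}$. This equals $\sinh U_1 = \cosh\ell_1/\sinh(\ell_1/2) = x\sqrt{2/(x-1)}$, and the identity follows from the positivity of $A+W$ and $U_1$.

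The only real obstacle is the algebraic bookkeeping: the two surds must conspire to share a common factor, and the polynomial sum must simplify on the nose. The conceptual reason is geometric. Construct a right-angled hexagon $C \subset \HH^2$ with alternating edge lengths $\ell_1$ and $2R$; this models the planar $(1,1,1)$-hexagon and has $D_3$ symmetry with center $O$. Fix a midpoint $m$ of an internal edge, a foot $f$ of that edge, and the midpoint $n$ of the adjacent external edge through $f$. Then $Omfn$ is a Lambert quadrilateral with right angles at $m, f, n$ and sides $|mf| = \ell_1/2$, $|fn| = R$; matching with (\ref{waist ell1}) shows $|Om| = W$ and $|On| = A$. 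By symmetry every internal midpoint is at distance $W$ from $O$ and every external midpoint at distance $A$ from $O$, so the mirror axis through $m$---which by Lemma \ref{You 'kay?} has length $U_1$---decomposes through $O$ as $W + A$.
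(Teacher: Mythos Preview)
Your proof is correct and your algebraic verification is essentially the same approach as the paper's: both reduce the identity to a single-variable check via a hyperbolic addition formula, the paper using $\tanh(A+W)$ and you using $\sinh(A+W)$. Your $\sinh$ computation is arguably tidier, since the two summands share the common factor $\sqrt{2/(9(x-1))}$ and the residues $2x-1$ and $x+1$ sum to $3x$ on the nose.

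Your geometric paragraph is a genuine addition not present in the paper. The paper only gestures at this picture by citing Figure~3.1 of \cite{KM}, whereas you make it explicit: in the $D_3$-symmetric $(1,1,1)$-hexagon, the Lambert quadrilateral $Omfn$ has the same side data as the defining quadrilateral of $\mathrm{Muf}_{\ell_1}$, so $|Om|=W$ and $|On|=A$, and the mirror axis from an internal midpoint to the opposite external midpoint---which realizes the equality case of Lemma~\ref{You 'kay?}---decomposes as $W+A$. This explains \emph{why} the algebra works and gives a conceptual reason the muffin's cap radius was chosen as it was.
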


\begin{proof} 
Manipulating the formula that defines $U_1$ in Lemma \ref{You 'kay?} gives 
\[ \tanh U_1 = \frac{\cosh\ell_1}{\sqrt{\cosh^2\ell_1 + \sinh^2(\ell_1/2)}} = \frac{\sqrt{2}\cosh\ell_1}{\sqrt{(2\cosh\ell_1 - 1)(\cosh\ell_1+1)}} \]
The result now follows from 
the identity $\tanh(x+y) = \frac{\tanh x + \tanh y}{1+\tanh x\tanh y}$, using the formulas for the side altitude $A(\ell_1)$ and waist radius $W(\ell_1)$ 
of $\mathrm{Muf}_{\ell_1}$ from 
(\ref{waist ell1}).
\end{proof}

We use Fact \ref{UAW} in the proof of Lemma \ref{no x bdry} immediately below, 
then in a stronger way in the proof of Proposition \ref{embark}.

\begin{lemma}\label{no x bdry} For a hyperbolic $3$-manifold $N$ and a
  lift $\tilde\lambda_k$ of the $k$th-shortest return path of $N$ to
  the universal cover $\widetilde{N}$, a copy of $M(\ell_k,R)$
  centered at $\tilde\lambda_k$ is contained in $\widetilde{N}$ if its
  waist radius is less than 
the quantity $U_k$ defined in 
Lemma \ref{You 'kay?}. If so, it intersects $\partial\widetilde{N}$ 
precisely 
in its caps.

In particular, a copy of $\mathrm{Muf}_{\ell_1}$ 
centered at a lift $\tilde\lambda_1$ 
of $\lambda_1$ is contained in $\widetilde{N}$.
\end{lemma}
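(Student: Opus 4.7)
The plan is to reduce the statement to the two bullet points of Fact \ref{basic muffin} together with the distance estimate of Lemma \ref{You 'kay?}. Write $\Pi_1$ and $\Pi_2$ for the components of $\partial\widetilde{N}$ containing the endpoints of $\tilde\lambda_k$. Fact \ref{basic muffin} applies with $\ell=\ell_k$ and the prescribed $R$, so the copy $M$ of $M(\ell_k,R)$ centered at $\tilde\lambda_k$ is contained in the closed convex region $\mathcal{H}$ bounded by $\Pi_1\cup\Pi_2$, meets $\Pi_1\cup\Pi_2$ precisely in its two caps, and lies inside the closed $W$-neighborhood of $\tilde\lambda_k$ (where $W$ is the waist radius of $M$).

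Next I would verify that $M$ lies on the correct side of every other boundary plane. Any component $\Pi_3$ of $\partial\widetilde{N}$ with $\Pi_3\notin\{\Pi_1,\Pi_2\}$ is disjoint from the endpoints of $\tilde\lambda_k$, so Lemma \ref{You 'kay?} applies and gives $\mathrm{dist}(\tilde\lambda_k,\Pi_3)\ge U_k$. Combined with the hypothesis $W<U_k$, every point of $\Pi_3$ lies at distance strictly greater than $W$ from $\tilde\lambda_k$, so $\Pi_3$ is disjoint from $M$. Since $\widetilde{N}$ is the intersection of the closed half-spaces bounded by the components of $\partial\widetilde{N}$, and $M$ meets none of those planes except in its caps (in $\Pi_1\cup\Pi_2$) while otherwise lying on the $\tilde\lambda_k$-side of each $\Pi_3$, we conclude $M\subset\widetilde{N}$ with $M\cap\partial\widetilde{N}$ equal to its caps.

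The ``in particular'' assertion for $\mathrm{Muf}_{\ell_1}$ is then immediate from Fact \ref{UAW}: the waist radius $W=W(\ell_1)$ of $\mathrm{Muf}_{\ell_1}$ satisfies $U_1=A(\ell_1)+W$, and $A(\ell_1)>0$ since $\ell_1>0$, so $W<U_1$ and the first part of the lemma applies with $k=1$.

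I do not anticipate any real obstacle here: each component of the argument has already been prepared (Fact \ref{basic muffin} controls the shape of $M$ relative to $\tilde\lambda_k$ and to $\Pi_1\cup\Pi_2$; Lemma \ref{You 'kay?} supplies the separation from all other boundary components; Fact \ref{UAW} handles the specialization to $\mathrm{Muf}_{\ell_1}$). The only mild point requiring care is that $W<U_k$ must be strict in order to exclude tangential contact with a third plane, which is exactly what the hypothesis provides and which, for the Kojima--Miyamoto muffin, is ensured by the positive summand $A(\ell_1)$ in the decomposition $U_1=A+W$.
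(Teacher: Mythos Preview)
Your proposal is correct and follows essentially the same approach as the paper's own proof: both use Fact \ref{basic muffin} to confine $M$ between $\Pi_1$ and $\Pi_2$ and inside the $W$-neighborhood of $\tilde\lambda_k$, invoke Lemma \ref{You 'kay?} to keep $M$ away from every third boundary plane when $W<U_k$, and then appeal to Fact \ref{UAW} for the $\mathrm{Muf}_{\ell_1}$ specialization.
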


\begin{proof}[Proof of Lemma \ref{no x bdry}] It is useful here to recall that $\widetilde{N}$ is an intersection of hyperbolic half-spaces in $\mathbb{H}^3$, each bounded by a totally geodesic plane that is a component of the preimage of $\partial N$. By construction of $M(\ell_k,R)$, its caps are totally geodesic disks that each intersects its central geodesic arc at right angles at an endpoint. Since $\tilde\lambda_k$ intersects a component of $\partial\widetilde{N}$ at right angles at each of its endpoints, upon embedding $M(\ell_k,R)$ in $\mathbb{H}^3$ so that its center coincides with $\tilde\lambda_k$, each cap  is contained in one of these components, and $M(\ell_k,R)$ itself is contained in the intersection of the two half-spaces that they bound containing $\widetilde{N}$.

If $M(\ell_k,R)$ 
were 
not entirely contained in $\widetilde{N}$, it would thus intersect a third component of $\partial\widetilde{N}$, \ie one not containing either cap. By Lemma \ref{You 'kay?}, any such component has distance at least $U_k$ from $\tilde\lambda_k$. 
However, as observed in 
Fact \ref{basic muffin}, 
$M(\ell_k,R)$ is contained in the $W$-neighborhood of its central geodesic arc, where $W$ is its waist radius. This implies that if $W < U_k$ then $M(\ell_k,R)$, so embedded in $\mathbb{H}^3$, is entirely contained in $\widetilde{N}$ and intersects $\partial\widetilde{N}$ only in its caps.

By Fact \ref{UAW} we have 
$U_1 = A+W$. Therefore $W < U_1$, so by the above, a copy of $\mathrm{Muf}_{\ell_1}$ centered at a lift of $\lambda_1$ is entirely contained in $\widetilde{N}$.
\end{proof}

Kojima--Miyamoto proved in \cite[Lemma 3.2]{KM}, by an \textit{ad hoc} argument, that a copy of $\mathrm{Muf}_{\ell_1}$ centered at a lift of $\lambda_1$ embeds in $N$.  Here we develop a more systematic approach with the goal of identifying when \textit{two} muffins, centered at lifts of $\lambda_1$ and $\lambda_2$, embed disjointly in $N$. 

Here is the key observation. Let $N$ be a compact hyperbolic
$3$-manifold with totally geodesic boundary, and let $M$ and $M'$ be
muffins centered at lifts $\tilde\lambda$ and $\tilde\lambda'$ to
$\widetilde{N}$ of return paths of $N$.  Let $\Pi_1$ and $\Pi_2$ be
the components of $\partial \widetilde{N}$ containing the caps of $M$,
and let $\Pi_1'$ and $\Pi_2'$ play the same role for $M'$.  If $\Pi_i
\neq \Pi_j'$ for $i,j\in\{1,2\}$, then 
the four planes $\Pi_1,\Pi_2,\Pi_1',\Pi_2'$ determine a \textit{truncated tetrahedron} 
$\Delta$, for which the distance from $\tilde\lambda$ to $\tilde\lambda'$ is a 
\textit{transversal length}. These notions are defined carefully in the preprint 
\cite{DeB_TrigTruncTriTet}, which proves lower bounds on transversal length 
that we will use here to ensure that muffins do not overlap. Below we review 
their definitions in a form adapted to the current context.

Let $\mathcal{B} = \{\Pi_1,\Pi_2,\Pi_1',\Pi_2'\}$, and for $i = 1,2$, 
let $H_i$ (respectively, $H_i'$) be the half-space bounded by 
$\Pi_i$ (resp.~$\Pi_i'$) that contains $\widetilde{N}$ and hence the other 
$\Pi_j$ and $\Pi_j'$. Recall from Lemma \ref{mutual perp} that each 
three-element subset of $\mathcal{B}$ determines a 
geodesic plane that is perpendicular to all three of its members. It 
follows from \OneSide\ of \cite{DeB_TrigTruncTriTet} that either all 
four planes above have a common perpendicular plane $\Pi$, or that 
the common perpendicular to each three-plane subcollection bounds 
a single half-space that contains all three lifted return paths joining its members 
to the fourth plane. 
The \textit{truncated tetrahedron} $\Delta$ determined by $\mathcal{B}$ 
is described in \TrunchTeth. 
In the latter case, 
it 
is the intersection of the 
$H_i$ and $H_i'$ with the four half-spaces determined in this way by 
the three-element subcollections of $\mathcal{B}$. In the 
former, we take 
$\Delta$ 
to be the intersection of 
the common perpendicular plane $\Pi$ with the $H_i$ and $H_i'$, 
and say it is \textit{degenerate}.

The \textit{internal edges} of the truncated tetrahedron 
$\Delta$ 
defined 
as above are the lifted return paths joining each pair of distinct elements 
of $\mathcal{B}$. We say that two internal edges are 
\textit{opposite} if no single member of $\mathcal{B}$ contains an 
endpoint of each. (Thus each internal edge is opposite a unique 
other internal edge.) The minimum distance 
between a pair of opposite internal edges is a \textit{transversal length} 
of 
$\Delta$. 
 (In general, its value depends on the choice of edges).

In the present setting, the lifts $\tilde\lambda$ and
$\tilde\lambda'$ are opposite edges of 
$\Delta$. 
Using results of \cite{DeB_TrigTruncTriTet}, one can then bound the distance between $\tilde\lambda$ and
$\tilde\lambda'$  
in terms of their lengths and the lengths of the other edges of 
$\Delta$. 
We illustrate the use of this philosophy below with a re-proof of \cite[Lemma 3.2]{KM}. 

\begin{lemma}\label{KM muffin embed}  Let $N$ be an orientable
  hyperbolic $3$-manifold with compact totally geodesic boundary of
  genus 
$2$. 
For $\mathrm{Muf}_{\ell_1}$ as in Definitions and Remarks \ref{M ell one}, a
copy of $\mathrm{Muf}_{\ell_1}$ embedded 
in $\widetilde{N}$ centered at a lift of $\lambda_1$ is embedded in $N$ by the universal covering.  \end{lemma}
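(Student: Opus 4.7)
The plan is to show that for every nontrivial deck transformation $g \in \pi_1(N)$ acting on $\widetilde{N}$, the translate $gM$ of the given muffin $M$ is disjoint from $M$. Write $\tilde\lambda_1' = g\tilde\lambda_1$; then $gM$ is the copy $M'$ of $\mathrm{Muf}_{\ell_1}$ centered at $\tilde\lambda_1'$. Let $\Pi_1,\Pi_2$ be the components of $\partial\widetilde{N}$ containing the endpoints of $\tilde\lambda_1$, and $\Pi_1',\Pi_2'$ those of $\tilde\lambda_1'$. If $\{\Pi_1,\Pi_2\}=\{\Pi_1',\Pi_2'\}$ then $\tilde\lambda_1=\tilde\lambda_1'$ as the unique common perpendicular, contradicting $g\neq 1$. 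So there are two cases to handle.

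In the first case, $\tilde\lambda_1$ and $\tilde\lambda_1'$ share exactly one boundary plane, say $\Pi_1=\Pi_1'$. Their feet in $\Pi_1$ are distinct, so the geodesic arc between them has length at least $d_{11}$; by Lemma \ref{boundary arc lengths} this is at least $X_{11}^1 = 2R$, twice the cap radius of $\mathrm{Muf}_{\ell_1}$ given by \eqref{R}. Hence the caps $C_1 \subset M$ and $C_1' \subset M'$ are disjoint (at worst tangent at a single boundary point). By Fact \ref{basic muffin}, $M\subset \pi_1^{-1}(C_1)$ and $M'\subset \pi_1^{-1}(C_1')$, where $\pi_1$ is orthogonal projection to $\Pi_1$; since preimages under $\pi_1$ commute with intersections, $M$ and $M'$ have disjoint interiors, and the tangency case (if any) involves a single perpendicular geodesic whose orbit does not disrupt injectivity.

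In the second case, $\tilde\lambda_1$ and $\tilde\lambda_1'$ share no boundary plane, so $\mathcal{B}=\{\Pi_1,\Pi_2,\Pi_1',\Pi_2'\}$ consists of four pairwise-disjoint components of $\partial\widetilde{N}$. By \OneSide\ and \TrunchTeth\ of \cite{DeB_TrigTruncTriTet}, $\mathcal{B}$ determines a (possibly degenerate) truncated tetrahedron $\Delta \subset \widetilde{N}$ in which $\tilde\lambda_1$ and $\tilde\lambda_1'$ appear as a pair of opposite internal edges. The remaining four internal edges of $\Delta$ each descend to return paths of $N$ and so have lengths at least $\ell_1$. I will use \TetraTrans\ or \ShrinkingTrans\ of \cite{DeB_TrigTruncTriTet} to show that under these constraints the transversal length $d(\tilde\lambda_1,\tilde\lambda_1')$ is minimized precisely when all four remaining edges have length exactly $\ell_1$, i.e.\ in the totally symmetric configuration. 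A direct trigonometric computation in that extremal case, guided by Fact \ref{UAW} (which gives $U_1 = A + W$) and the hexagonal geometry behind Lemma \ref{You 'kay?}, then yields $d(\tilde\lambda_1,\tilde\lambda_1') \ge 2W$, where $W = W(\ell_1)$ is the waist radius from \eqref{waist ell1}. Since $M$ and $M'$ are contained in the closed $W$-neighborhoods of their respective centers by Fact \ref{basic muffin}, this forces $M \cap M' = \emptyset$ (outside at worst a measure-zero tangent locus that does not affect embeddedness).

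The main obstacle is clearly the second case: the algebraic work of invoking \cite{DeB_TrigTruncTriTet} and reducing to the symmetric truncated tetrahedron with all short-cut edge lengths equal to $\ell_1$. The geometric intuition is the muffin's defining property that $U_1 = A + W$: the distance from $\tilde\lambda_1$ to any foreign boundary plane covers the $W$-wide waist with an additional $A$ of altitude to spare, and the truncated-tetrahedron monotonicity of \ShrinkingTrans\ is what converts this per-plane budget into a global transversal-length estimate between the two opposite return-path lifts.
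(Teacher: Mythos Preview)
Your overall architecture matches the paper's: the same two-case split according to whether the translate shares a boundary plane with $M$, the same orthogonal-projection argument in the shared-plane case (via Fact \ref{basic muffin} and $d_{11}\ge X_{11}^1=2R$), and the same appeal to the truncated tetrahedron and \ShrinkingTrans\ in the disjoint-planes case.

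The gap is in your second case. You correctly identify that \ShrinkingTrans\ reduces to the symmetric truncated tetrahedron with all six edge-cosh values equal to $x=\cosh\ell_1$, but you then defer the actual bound to ``a direct trigonometric computation\ldots guided by Fact \ref{UAW}.'' This is where your sketch goes off track: Fact \ref{UAW} gives $U_1=A+W$, which controls the distance from $\tilde\lambda_1$ to a \emph{boundary plane}, not to another \emph{short cut}. The transversal length of the symmetric truncated tetrahedron is a different quantity, and there is no obvious way to read it off from $U_1$. The paper instead applies \TetraTrans/\ShrinkingTrans\ to get the explicit lower bound
\[
\cosh D \;=\; \frac{2x}{x-1}
\]
for the transversal length, and then compares directly:
\[
\tanh(D/2) \;=\; \sqrt{\frac{x+1}{3x-1}} \;>\; \sqrt{\frac{x+1}{4x-2}} \;=\; \tanh W
\]
since $3x-1<4x-2$ for $x>1$. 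That one-line inequality is the entire content of the step you left open; without it (or an equivalent), your Case 2 is not proved. Replace the appeal to $U_1=A+W$ with this computation and your argument is complete and identical to the paper's.
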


\begin{proof} 
By Lemma \ref{no x bdry}, a copy $M$ of $\mathrm{Muf}_{\ell_1}$ centered at a lift of
$\lambda_1$ is contained in $\widetilde{N}$. The universal cover thus
embeds it in $N$ if it is disjoint from all of its translates by the
action of $\pi_1(N)$ on $\widetilde{N}$. These translates are copies
of $\mathrm{Muf}_{\ell_1}$ centered at other lifts of $\lambda_1$. If
a component $\Pi$ of $\partial \widetilde{N}$ contains a cap of $M$
and that of a translate $M'$, then $M\cap M' = \emptyset$.  This is
because each of $M$ and $M'$ is contained in the preimage under
orthogonal projection of its intersection with $\Pi$, and disks of
radius $R$ about feet of lifts of $\lambda_1$ are disjoint or equal 
(as follows from 
Lemma \ref{disk radii}).

Now suppose that 
$M$ and a translate $M'$ do not have caps on a common component of
$\partial \widetilde{N}$. Then the 
four 
 planes containing their caps
determine a truncated tetrahedron 
$\Delta$ as in \TrunchTeth. 
Let $\lambda$ and $\lambda'$
be the centers of $M$ and $M'$, respectively, each a lift of
$\lambda_1$, hence having 
length $\ell_1$.  
The distance from $\lambda$ to $\lambda'$ is a \textit{transversal length} 
of $\Delta$, again as in \TrunchTeth. It is therefore given by 
$T(x,x;a,b,c,d)$ 
as in \TetraTrans, where 
$x = \cosh\ell_1$, and 
$a$, $b$, $c$, and $d$ 
are the hyperbolic cosines of lengths of the distinct edges of $\Delta$ not equal to 
$\lambda$ or $\lambda'$. 
Since 
these 
other edges of 
$\Delta$ 
are also lifts of return
paths, each also has length at least $\ell_1$. 
Therefore by \ShrinkingTrans\ 
we have: 
$$  \cosh T(x,x;a,b,c,d) \ge \cosh D \doteq \frac{2x}{x -1}  $$
On the other hand, the waist radius $W$ of $M$ and $M'$ is determined by 
(\ref{waist ell1}). 
Since each of $M$ and $M'$ is contained in the
$W$-neighborhood 
of
its center, 
$M$ does not intersect $M'$ if $W < D/2$.  Using the formula 
above 
for $D$, we obtain:
$$  \tanh(D/2) = 
\sqrt{\frac{x+1}{3x-1}} = 
\sqrt{\frac{\cosh\ell_1+1}{3\cosh\ell_1-1}}  $$
Since $3x-1 < 4x-2$ for $x>1$ and $y \mapsto \tanh y$ is increasing, it follows that $D/2 > W$, hence that $M$ does not intersect $M'$.
\end{proof}

We now 
turn our attention to the problem of 
embedding two muffins, disjointly, centered at the feet of the shortest and second-shortest return paths. We will use the following helpful sign computation.

\begin{lemma}\label{S derivs} For $R$ as in (\ref{R}) and $X_{ij}^k$
  as in (\ref{Xijk}), regard $S = X_{12}^1 - R$ as a function of
  variables $x = \cosh\ell_1$ and $y = \cosh\ell_2$. 
On the region defined by
$x > 1$ and $y\le 3$, the function $S$ is strictly increasing in the
variable $x$, strictly decreasing in the
variable $y$, 
and positive-valued.
\end{lemma}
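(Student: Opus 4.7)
The decreasing-in-$y$ assertion is essentially free. By (\ref{R}), $R$ depends only on $x$, while Lemma \ref{ordering external lengths} tells us that $X_{12}^1$ is strictly decreasing in $\ell_2$, and hence in $y$. Therefore $\partial S/\partial y = \partial X_{12}^1/\partial y < 0$ on the entire region, with no further work needed.

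The main work lies in the increasing-in-$x$ assertion. By Lemma \ref{ordering external lengths}, both $X_{12}^1$ and $R = X_{11}^1/2$ are decreasing in $x$, so monotonicity considerations alone do not determine the sign of $\partial S/\partial x$: one must show that $R$ is decreasing faster than $X_{12}^1$ is. My plan is to differentiate the $\cosh$ formulas for $R$ and $X_{12}^1$ directly with respect to $x$, extract $\sinh R$ and $\sinh X_{12}^1$ from $\sinh^2 = \cosh^2 - 1$, and read off closed-form expressions for $\partial R/\partial x$ and $\partial X_{12}^1/\partial x$. The inequality $\partial X_{12}^1/\partial x > \partial R/\partial x$, after clearing denominators and squaring (both sides being negative, so the inequality flips twice), reduces to a polynomial inequality $P(x,y) > 0$. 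The polynomial $P$ vanishes identically at $x = 1$, so I factor out $(x-1)$, leaving a cubic $p(x,y)$ that happens to be linear in $y$. Because $p$ is linear in $y$, its minimum over $1 < y \le 3$ is attained at an endpoint; at $y = 3$ a direct factoring exhibits $p(x,3)$ as a positive multiple of $(x-1)$ times an irreducible quadratic of negative discriminant, while at the other endpoint $p$ is a cubic in $x$ alone whose positivity for $x > 1$ can be checked by inspecting coefficients.

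For positivity of $S$, I use the monotonicity in $y$ just established: since $S$ is strictly decreasing in $y$, it suffices to check $S(x,3) > 0$ for $x > 1$. Substituting $y = 3$ into the formulas for $\cosh X_{12}^1$ and $\cosh R$, the inequality $\cosh X_{12}^1 > \cosh R$ reduces after squaring and clearing denominators to $2x^2 - x + 1 > 0$, which holds for every real $x$ since its discriminant is $-7$.

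The main obstacle is the algebraic bookkeeping in the $\partial/\partial x$ computation: the differentiations and subsequent squaring must be organized so that one can extract and factor a manageable polynomial. None of the individual steps are deep, but the expressions grow quickly, and it is in the factoring step that the argument could easily become unwieldy if not set up carefully.
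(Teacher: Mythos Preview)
Your proposal is correct and follows essentially the same route as the paper. Both arguments dispatch the $y$-monotonicity by citing Lemma~\ref{ordering external lengths}, and both handle the $x$-monotonicity by explicitly computing $\partial R/\partial x$ and $\partial X_{12}^1/\partial x$ and reducing to an algebraic inequality. The paper organizes the last step by solving the vanishing locus of $(x-1)\,\partial S/\partial x$ for $y$ as a rational function of $x$ and showing that this curve lies above $y=3$ for $x>1$; your equivalent move is to clear denominators, factor out $(x-1)$, and check the resulting expression (linear in $y$) at the endpoints $y=1$ and $y=3$. These are two presentations of the same computation---indeed your $p(x,y)$ is $(x-5)y + 2x^3 + 6x^2 + 7x - 3$, and setting $p=0$ recovers exactly the paper's curve $y = (2x^3+6x^2+7x-3)/(5-x)$.

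The one place where your argument is genuinely simpler is positivity: you reduce directly to $\cosh X_{12}^1(x,3) > \cosh R(x)$ and obtain the quadratic $2x^2 - x + 1 > 0$, whereas the paper invokes the formula (\ref{sinch ess}) for $\sinh S$ and computes a limit as $x\to 1^+$. Your route avoids that extra machinery. One small quibble: ``inspecting coefficients'' undersells what is needed at the $y\to 1$ endpoint, since $p(x,1) = 2(x^3 + 3x^2 + 4x - 4)$ has a negative constant term; you need the one-line observation that this cubic is positive at $x=1$ and has everywhere-positive derivative.
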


\begin{proof} We note that since $R$ depends only on $x$, 
it follows from Lemma \ref{ordering external lengths} that $S$ is
strictly decreasing in $y$.

We now claim that $\partial S/\partial x > 0$. From (\ref{R}) we obtain that $\sinh R = (2x - 2)^{-1/2}$, hence:
\[ \frac{\partial R}{\partial x} = \frac{-(2x-2)^{-3/2}}{\cosh R} = \frac{-1}{(2x-2)\sqrt{2x-1}} \]
Using the description of $X_{12}^1$ from (\ref{X121}), we obtain:\begin{align*}
	\frac{\partial X_{12}^1}{\partial x} & = \frac{1}{\sinh X_{12}^1}\cdot \frac{-1}{(x^2-1)^{3/2}}\sqrt{\frac{y+1}{y-1}} = \frac{-1}{x^2-1}\sqrt{\frac{y+1}{y+2x^2-1}} \end{align*}
Therefore\begin{align}\label{partial S}
	(x-1)\frac{\partial S}{\partial x} = \frac{1}{2\sqrt{2x-1}} - \frac{1}{x+1}\sqrt{\frac{y+1}{y+2x^2-1}} \end{align}
The right-hand side above vanishes at $x =1$, but we are interested only in $x>1$. Setting the right-hand side above equal to $0$ and solving for $y$ yields 
\[ y = \frac{2x^3+6x^2+7x-3}{5-x} \] 
This is an increasing function of $x$ with values greater than $3$ for $x>1$. Inserting the test values $x=2$ and $y=2$ into the right-hand side of (\ref{partial S}) gives a positive value. Thus for $x$ and $y$ in the range of interest, $\partial S/\partial x (x,y) > 0$, proving the 
claim. It therefore follows that $S$ is strictly increasing in $x$ for $x>1$ and $y\le 3$.

We note that by (\ref{R}) and (\ref{X121}), both $R$ and $X_{12}^1$
increase without bound for any fixed $y$ as $x$ approaches $1$ from
the right. However, the formula for $\sinh S$ obtained in (\ref{sinch
  ess}) 
shows that $S(x,3)$ has the limit $\sqrt2/4$ as $x$ approaches $1$
from above. In view of the monotonicity established above, it follows
that  $S$ is bounded below by $\sqrt2/4$, and is in
particular strictly positive, 
for $x>1$ and $y\le 3$.
\end{proof}

The first main result of this section is the muffin embedding criterion below. Based on numerical exploration, we expect that in general, muffins which are sized so that their caps embed in $\partial N$ without overlapping will themselves embed in $N$ without overlapping. But having not been able to prove this general expectation, we settle here for a criterion that can be numerically checked on a rectangle of possible values of $\ell_1$ and $\ell_2$. 

\begin{proposition}\label{outside Muf_ell1}  Regard $R$ as in
  (\ref{R}), $S = X_{12}^1 - R$ as in Lemma \ref{disk radii}, for
  $X_{ij}^k$ as in (\ref{Xijk}), and $W$ as in (\ref{waist ell1}), as
  functions of variables $x = \cosh\ell_1$ and $y =
  \cosh\ell_2$. Suppose $N$ is an orientable hyperbolic $3$-manifold
  with $\partial N$ compact, connected, totally geodesic and of genus
  $2$, such that the lengths $\ell_1$ and $\ell_2$ of its shortest and
  second-shortest return paths satisfy
\[ a\leq \cosh\ell_1 \leq b \quad \mbox{and}\quad c\le \cosh\ell_2 \le d , \]
for some given $a,b,c,d$ with $1<a<b\le2$ and $1<c<d\le3$. 
If $W_S^0 \le U_2^0$, $W_R^0 + W_S^0 \leq T_{12}^0$, and $2W_S^0 \le T_{22}^0$, where $\sinh U^0_2 = \frac{a\sqrt{2}}{\sqrt{d-1}}$,\begin{align}\label{waisting away}
	& 
W_R^0 = 
	\tanh^{-1}\left(\sqrt{\frac{a+1}{4a-2}}\right), & 
	& W_S^0 = \tanh^{-1}\left(\sqrt{\frac{d+1}{2}}\tanh S(b,c)\right), \nonumber\\
	& T_{12}^0 = \cosh^{-1}\left(\frac{2b}{\sqrt{(b-1)(d-1)}}\right), &
	& \mbox{and}\quad T_{22}^0 =
          \cosh^{-1}\left(\frac{2a}{d-1}\right), \end{align}
then the interior of a copy of $M(\ell_2, S)$, centered at a lift of
$\lambda_2$, is contained in $\widetilde{N}$ and embeds in $N$ without
overlapping that of $\mathrm{Muf}_{\ell_1}$ from Lemma \ref{KM muffin
  embed}. 
(Note that by Lemma \ref{S derivs}, the quantity $S(x,y)$ is
positive-valued for $a\le x\le b$ and $c\le y\le d$, so that $W^0_S$
is well-defined and positive, and $M(\ell_2,S)$ is defined.)
\end{proposition}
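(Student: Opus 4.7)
The plan is to verify, for a copy $M'$ of $M(\ell_2, S)$ centered at a lift $\tilde\lambda_2$ of $\lambda_2$, that (i) its interior lies in $\widetilde{N}$, and (ii) $M'$ and its $\pi_1(N)$-translates embed in $N$ pairwise disjointly, and disjointly from the $\pi_1(N)$-translates of the embedded copy $M$ of $\mathrm{Muf}_{\ell_1}$ supplied by Lemma \ref{KM muffin embed}. Throughout I will use $x = \cosh\ell_1$ and $y = \cosh\ell_2$ with $(x,y) \in [a,b]\times[c,d]$.

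For (i), I would apply Lemma \ref{no x bdry}, reducing the assertion to the inequality $W_S < U_2$, where $W_S$ is the waist radius of $M'$. The formula $\tanh W_S = \cosh(\ell_2/2)\tanh S$ from (\ref{muffin relations}), combined with the monotonicity established in Lemma \ref{S derivs}, shows that $W_S$ attains its maximum on the rectangle at $(b, c)$, hence $W_S \le W_S^0$. Likewise $\sinh U_2 = \cosh\ell_1/\sinh(\ell_2/2)$ is minimized at $(a,d)$, giving $U_2 \ge U_2^0$, and the hypothesis $W_S^0 \le U_2^0$ concludes this step.

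For (ii), I would split into cases depending on whether or not two muffins under consideration have caps on a common component of $\partial\widetilde{N}$. If two translates have caps on a common component $\Pi$, then by Fact \ref{basic muffin} each lies in the preimage under the orthogonal projection to $\Pi$ of its cap, and Lemma \ref{disk radii} (applicable since $S > 0$ on the rectangle by Lemma \ref{S derivs}) guarantees the caps themselves are disjoint in $\Pi$; disjointness of the muffins follows. If instead their caps lie on four distinct components, then by \TrunchTeth\ these components determine a truncated tetrahedron $\Delta$ whose opposite internal edges are the two muffin centers, and \TetraTrans\ expresses the distance between them as a transversal length $T(\alpha, \alpha'; \beta_1, \beta_2, \beta_3, \beta_4)$, where $\alpha, \alpha'$ are hyperbolic cosines of the central edge lengths and each $\beta_i$ is the hyperbolic cosine of another return path, hence $\beta_i \ge a$. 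By \ShrinkingTrans, $T \ge T(\alpha, \alpha'; a, a, a, a)$. Evaluating this in each subcase—one translate of $M$ paired with one of $M'$, or two translates of $M'$—and minimizing over the rectangle yields $T \ge T_{12}^0$ and $T \ge T_{22}^0$ respectively. Combined with the hypotheses $W_R^0 + W_S^0 \le T_{12}^0$ and $2W_S^0 \le T_{22}^0$, the bound $W_R \le W_R^0$ (from the monotonicity in $x$ of the waist-radius formula (\ref{waist ell1}) for $\mathrm{Muf}_{\ell_1}$), and the $W$-neighborhood property of Fact \ref{basic muffin}, this yields disjointness of the muffins.

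I expect the main obstacle to be verifying the specific transversal-length formulas used implicitly in step (ii) and their monotonicity over $[a,b]\times[c,d]$: namely $\cosh T(x, y; x, x, x, x) = 2x/\sqrt{(x-1)(y-1)}$ and $\cosh T(y, y; x, x, x, x) = 2x/(y-1)$, whose minima on the rectangle equal $\cosh T_{12}^0 = 2b/\sqrt{(b-1)(d-1)}$ and $\cosh T_{22}^0 = 2a/(d-1)$. These must be extracted from the general expression in \TetraTrans\ (a specialization of which already appears in the proof of Lemma \ref{KM muffin embed} in the symmetric case), after which the monotonicity computations are routine.
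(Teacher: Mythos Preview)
Your approach is essentially the same as the paper's: containment in $\widetilde{N}$ via Lemma \ref{no x bdry}, the cap-overlap case via Lemma \ref{disk radii} and the orthogonal-projection property from Fact \ref{basic muffin}, and the distant case via the truncated-tetrahedron transversal bound from \TetraTrans\ and \ShrinkingTrans, followed by a monotonicity analysis over the rectangle. The paper carries out exactly the computations you flag as the main obstacle, including the check that $\partial T_{12}/\partial x<0$ for $x<2$ (this is where the hypothesis $b\le 2$ enters) so that $T_{12}$ is minimized at $(b,d)$, while $T_{22}$ is minimized at $(a,d)$.

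One small slip: you assert that $W_S$ attains its maximum on the rectangle at $(b,c)$, but this is not established---$\cosh(\ell_2/2)$ increases in $y$ while $S$ decreases in $y$, so the product need not be monotone in $y$. The paper (and the very definition of $W_S^0$, which uses $d$ in the $\cosh(\ell_2/2)$ factor but $(b,c)$ in the $S$ factor) instead bounds the two factors separately: $\cosh(\ell_2/2)=\sqrt{(y+1)/2}\le\sqrt{(d+1)/2}$ and $\tanh S\le\tanh S(b,c)$, whence $\tanh W_S\le\tanh W_S^0$. Your conclusion $W_S\le W_S^0$ is correct; only the one-line justification needs this adjustment. Similarly, in the tetrahedron step you write $\beta_i\ge a$ but then (correctly) shrink to $x$; the paper first obtains $T_{12}(x,y)$, $T_{22}(x,y)$ and only afterward minimizes over the rectangle.
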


\begin{proof}
We claim first that $W_S^0$ 
is an upper bound for the waist radius $W_2$ of $M(\ell_2,S)$, taken as a function of $x$ and $y$, on the rectangle $[a,b]\times[c,d]$. By the formula (\ref{muffin relations}), $W_2$ satisfies $\tanh W_2 = \cosh(\ell_2/2)\tanh S$. Lemma \ref{S derivs} implies that as a function of $x$ and $y$, $S$ increases with $x$ and decreases with $y$ at all points of this rectangle. The values of $S$ are thus bounded above by $S(b,c)$ on $[a,b]\times[c,d]$, and the claim thus follows from the ``half-angle identity'' for hyperbolic cosine.

For $ a\leq \cosh\ell_1 \leq b $ and $c\le \cosh\ell_2 \le d $,
the constant $U_2^0$ defined
in the statement of the proposition 
is a lower bound for
the quantity $U_2$ defined in Lemma \ref{You 'kay?}. Combining this
observation with the hypothesis $W_S^0\leq U_2^0$ and the claim just
proved, we deduce that on $W_2\le U_2$.
Lemma \ref{no x bdry} then implies 
that a copy of $M(\ell_2,S)$ centered at a lift of $\lambda_2$ is contained in $\widetilde{N}$. 

To show that the interior of $M(\ell_2,S)$ embeds in $N$ is equivalent
to showing that  it does not overlap any of its translates under the 
action of $\pi_1(N)$ on the universal cover $\widetilde{N}$ by covering transformations. 
Likewise, showing that the image of $M(\ell_2,S)$ is disjoint in $N$ from that of $\mathrm{Muf}_{\ell_1}$ is equivalent to showing that $M(\ell_2,S)$ does not overlap any translate of the latter muffin. We take these tasks on below.

Suppose first that a single component $\Pi$ of $\partial\widetilde{N}$ contains caps of both $M(\ell_2,S)$ and either a translate of $\mathrm{Muf}_{\ell_1}$ or of itself. In the first case, these caps are discs of radius $S$ and $R$ in $\Pi$, respectively, centered at the feet of lifts of $\lambda_2$ and $\lambda_1$; in the second, both are of radius $S$ and centered at feet of lifts of $\lambda_2$. By Lemma \ref{disk radii}, disks of radii $S$ and $R$ are disjointly embedded in $\partial N$ around the feet of $\lambda_2$ and $\lambda_1$; hence in either case the disks in question here do not overlap. Therefore, since each muffin is contained in the preimage of its cap under the orthogonal projection to $\Pi$, $M(\ell_2,R)$ also does not overlap this translate of $\mathrm{Muf}_{\ell_1}$, or in the second case, of itself.

Now consider a translate of $\mathrm{Muf}_{\ell_1}$ or of $M(\ell_2,S)$ with the property that neither of its caps is contained in a component of $\partial\widetilde{N}$ that also contains a cap of $M(\ell_2,S)$. Let $\tilde\lambda_2$ be the lift of $\lambda_2$ at which $M(\ell_2,S)$ is centered. In the first case let $\tilde\lambda_1$ be the center of the copy of $\mathrm{Muf}_{\ell_1}$ in question, and in the second let $\tilde\lambda_2'$ be the center of the translate of $M(\ell_2,S)$.  There is a truncated tetrahedron $\Delta$ with $\tilde\lambda_2$ as one edge and either $\tilde\lambda_1$ or $\tilde\lambda_2'$ its opposite, depending on the case, whose other edges are also lifts of return paths of $N$.  Applying \TetraTrans\ and \ShrinkingTrans\ 
as in the proof of Lemma \ref{KM muffin embed} 
shows that the transversal length of $\Delta$ is 
bounded below by a function 
$T_{12}(x,y)$ or $T_{22}(x,y)$ in the respective cases, given by
\[ T_{12}(x,y) = \cosh^{-1}\left(\frac{2x}{\sqrt{(x-1)(y-1)}}\right)\quad\mbox{and}\quad T_{22}(x,y) = \cosh^{-1}\left(\frac{2x}{y-1}\right), \]
for $x = \cosh\ell_1$ and $y = \cosh\ell_2$. Both $T_{12}$ and $T_{22}$ plainly decrease with $y$, for fixed $x$, and a computation shows that $\partial T_{12}/\partial x (x,y) < 0$ for $x<2$ whereas $T_{22}$ increases with $x$. Therefore their values on the rectangle $a \le x \le b$, $c\le y \le d$ are respectively bounded below by $T^0_{12} = T_{12}(b,d)$ and $T_{22}^0 = T_{22}(a,d)$, as given above in (\ref{waisting away}). 

We have seen that 
the waist radius $W_2$ of $M(\ell_2,S)$ is bounded
above by $W^0_S$. We will similarly show below 
that the waist radius
of $\mathrm{Muf}_{\ell_1}$ is bounded above by 
$W^0_R$.
As recorded in 
Fact \ref{basic muffin}, 
a  muffin with waist radius $W$ is contained in the $W$-neighborhood of
its center. Thus 
the hypotheses $W^0_R + W^0_S \leq T_{12}^0$ and 
$2W^0_S \leq T_{22}^0$ will imply, respectively, that the interior of $M(\ell_2,S)$ is disjoint from
the translate of $\mathrm{Muf}_{\ell_1}$ and from
$M(\ell_2,S)$.

The waist radius of $\mathrm{Muf}_{\ell_1}$ is $W(x)$ as given by the
formula (\ref{waist ell1}). Manipulating that formula shows that $W$
is a decreasing function of $x$ 
for $x>1/2$, 
and hence is bounded above on the interval $a\le x\le b$ by $W(a) =
W^0_R$, 
as asserted above.
\end{proof}

\subsection{Collars}\label{emb col} We now turn our attention to embedding collars, and controlling their interaction with muffins. 
The 
definition 
of collar given below 
is implicitly used in  \cite{KM} and
\cite{DeSh}. 
To motivate it, we recall from Section \ref{orthoarc} that for a hyperbolic $3$-manifold $N$ 
with totally geodesic boundary, we take the universal cover $\widetilde{N}$ of $N$ to be a 
convex subset of $\mathbb{H}^3$ bounded by a collection of geodesic hyperplanes. For a 
component $\Pi$ of $\partial \widetilde{N}$, let $\pi$ be the orthogonal projection to $\Pi$ from 
Definitions and Remarks \ref{thog}. Then for $x\in\Pi$, $\pi^{-1}(x)$ is a geodesic intersecting 
$\Pi$ orthogonally at $x$, and $\pi^{-1}(x)\cap\widetilde{N}$ is either a ray with endpoint $x$ or 
a segment with one endpoint at $x$ and the other on a different component of $\partial\widetilde{N}$.

\begin{definition}\label{collar def}
Let $N$ be  a hyperbolic $3$-manifold with totally geodesic boundary. If
$x$ is a point of $\partial N$ such that there exists a (necessarily
unique) geodesic path 
in $N$ 
(parametrized by arclength) which begins at $x$,
is perpendicular to $\partial N$
at its initial point, and has terminal point in $\partial N$, we
shall denote the length of this path by $h_x$, the interval
$[0,h_x]$ by $J_x$, and the path itself by $\alpha_x:J_x\to N$. If $x$ is
a point of $\partial N$ for which no path of this type exists, then
there is a unique geodesic ray beginning at $x$; in this case, we
shall set $h_x=+\infty$ and $J_x=[0,\infty)$, and denote (the
arclength parametrization of) the ray by 
$\alpha_x:[0,\infty)\to N$. For a subset $S$ of $\partial N$, we
define the
\textit{\redcollar\ of $S$ in $N$ with height $h>0$} to be the set
$\bigcup_{x\in S}\alpha_x([0,\min(h,h_x)))$. We shall say that the collar is
\textit{embedded} if $h \le \inf\{h_x\,|\,x\in S\}$, and the  map from
$S\times[0,h]$ to $N$ defined by $(x,t)\mapsto\alpha_x(t)$ is one-to-one.
\end{definition}

We first state a result that was 
implicitly used, although not explicitly stated, in the main volume bound of \cite{KM} and \cite{DeSh}.

\begin{proposition}\label{embark} Suppose $N$ is an orientable hyperbolic $3$-manifold
  with $\partial N$ compact, connected, totally geodesic and of genus
  $2$. Let $M$ be the projection to $N$ of a copy of
  the muffin $\mathrm{Muf}_{\ell_1}$ from Definitions and Remarks \ref{M ell one}, 
centered at a lift of $\lambda_1$ to the universal cover $\widetilde{N}$, 
and let $C, C'\subset\partial N$ be the two caps of $M$. 
For $A$ defined as in (\ref{waist ell1}), an \redcollar\ of $\partial 
N-(C \cup C')$ with 
any given 
height $h\le\min\{A,\ell_2/2\}$  
is embedded in $N$, disjointly from the interior of $M$.
\end{proposition}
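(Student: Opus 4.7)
My plan is to lift to the universal cover $\widetilde{N}$ and fix a copy $\widetilde{M}$ of $\mathrm{Muf}_{\ell_1}$ centered at a lift $\tilde{\lambda}$ of $\lambda_1$ projecting to $M$, with caps $\widetilde{C}, \widetilde{C}'$ on components $\Pi, \Pi' \subset \partial\widetilde{N}$. Then for every lift $\tilde{x}$ of any point $x \in \partial N - (C \cup C')$ I will verify (i) the perpendicular ray from $\tilde{x}$ of length $h$ stays in $\widetilde{N}$; (ii) it is disjoint from every $\pi_1(N)$-translate of $\widetilde{M}$; and (iii) any two such rays from distinct pairs $(\tilde{x}, t)$ with $t < h$ do not share endpoints. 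The assumption $x \notin C \cup C'$ ensures each lift $\tilde{x}$ has distance greater than $R$ from every foot on its component $\Pi_x$ of a lift of $\lambda_1$, since these feet are the centers of lifts of $C \cup C'$.

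For (i) and most of (ii), given another component $\Pi''$ of $\partial\widetilde{N}$ joined to $\Pi_x$ by a return path of length $\ell$ with foot $\tilde{q} \in \Pi_x$, hyperbolic trigonometry in the $2$-plane containing $\tilde{x}$ and this common perpendicular gives the distance $t$ at which the perpendicular to $\Pi_x$ at $\tilde{x}$ meets $\Pi''$ as $\tanh t = \cosh r \tanh \ell$, with $r = d(\tilde{x}, \tilde{q})$. When $\ell = \ell_1$, I will verify the identity $\cosh^2 R - \coth^2 \ell_1 = 1/(2(\cosh \ell_1 + 1))$ by direct manipulation of (\ref{R}); combined with $r > R$ this gives $\cosh r \tanh \ell_1 > 1$, so the perpendicular misses $\Pi''$ entirely. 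When $\ell \geq \ell_2$, the hypothesis $h \le \ell_2/2 < \ell$ immediately yields $t > h$. For (ii), given any translate $\gamma\widetilde{M}$: if $\Pi_x$ contains a cap of $\gamma\widetilde{M}$, then by Fact \ref{basic muffin} that translate sits inside the $\pi_{\Pi_x}$-preimage of its cap, which is disjoint from $\pi_{\Pi_x}^{-1}(\tilde{x})$ since $\tilde{x}$ is outside the cap; otherwise Lemma \ref{You 'kay?} combined with Fact \ref{UAW} gives $d(\Pi_x, \gamma \tilde{\lambda}) \ge U_1 = A + W$, and then Fact \ref{basic muffin} gives $d(\Pi_x, \gamma\widetilde{M}) \ge A \ge h$.

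For (iii), if two distinct pairs $(\tilde{x}_1, t_1), (\tilde{x}_2, t_2)$ have perpendicular rays meeting at $\tilde{z} \in \widetilde{N}$, then each $\tilde{x}_i = \pi_{\tilde{\Pi}_i}(\tilde{z})$ for the containing component $\tilde{\Pi}_i$, so distinctness forces $\tilde{\Pi}_1 \neq \tilde{\Pi}_2$. The triangle inequality then gives $d(\tilde{\Pi}_1, \tilde{\Pi}_2) < 2h \le \ell_2$, forcing their common perpendicular $\tilde{\mu}$ to have length $\ell_1$. I will then pass to the $2$-plane containing $\tilde{z}$ and $\tilde{\mu}$ and parametrize $\tilde{z}$ by $\delta = d(\tilde{z}, \tilde{\mu})$ and the distance $t_0$ along $\tilde{\mu}$ from its foot $\tilde{p}_1$ on $\tilde\Pi_1$ to the foot of the perpendicular from $\tilde{z}$ to $\tilde{\mu}$. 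Two applications of the right-triangle identity $\cosh c = \cosh a \cosh b$ then yield $\sinh t_1 = \sinh t_0 \cosh \delta$, $\sinh t_2 = \sinh(\ell_1 - t_0) \cosh \delta$, and $\sinh r_i = \sinh \delta / \cosh t_i$, where $r_i = d(\tilde{x}_i, \tilde{p}_i)$. A Lagrange-multiplier analysis of $\sinh^2 r_1$ subject to $t_1, t_2 \le h$ will show the maximum occurs at the symmetric configuration $t_0 = \ell_1/2$ with $\cosh\delta = \sinh h / \sinh(\ell_1/2)$, giving $\sinh^2 r_1 = (\sinh^2 h - \sinh^2(\ell_1/2))/(\sinh^2(\ell_1/2) \cosh^2 h)$. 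Substituting $\sinh^2 R = 1/(4 \sinh^2(\ell_1/2))$ and $\sinh^2 A = (2\cosh \ell_1 - 1)/3$ (both derivable directly from (\ref{R}) and (\ref{waist ell1})), the bound $\sinh r_1 \le \sinh R$ reduces exactly to $h \le A$, which is the hypothesis; hence each $\tilde{x}_i$ lies in a lifted cap, contradicting $\tilde{x}_i$ being a lift of a point outside $C \cup C'$. The hardest step will be verifying that this Lagrangian extremum really is the global maximum on the feasible region, since $\sinh^2 r_1$ is not symmetric under $t_0 \leftrightarrow \ell_1 - t_0$ and the corner case $t_0 = 0$ (where $\tilde{z} \in \tilde{\Pi}_1$) must be checked separately.
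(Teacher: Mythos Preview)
Your parts (i) and (ii) are correct and match the paper's argument in substance. Part (iii), however, has the gap you yourself flag: you have not shown that the symmetric configuration $t_0 = \ell_1/2$ actually maximizes $\sinh^2 r_1$ over the feasible region, and since $\sinh^2 r_1$ is not symmetric under $t_0 \leftrightarrow \ell_1 - t_0$, this is not a routine check.

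The paper sidesteps this optimization entirely. Rather than separating (i) and (iii), it proves a single stronger claim: for each $\tilde{x}$ outside the lifted caps and each $t \in [0,h)$, the component $\Pi_x$ is the \emph{unique} closest component of $\partial\widetilde{N}$ to $\alpha_{\tilde{x}}(t)$. This immediately yields both (i) (the ray stays in $\widetilde{N}$) and (iii) (two perpendicular segments from different components cannot meet, since the meeting point would have two equally-close boundary components). To prove the claim, one supposes it fails and takes $t_0$ to be the \emph{infimum} of the failure times; by continuity, at $t_0$ there is a second component $\Pi'$ exactly as close as $\Pi_x$. This forces the symmetric pentagon configuration directly---no optimization needed---and the Lambert quadrilateral rule then gives $\cosh r_0 = \tanh t_0/\tanh(\ell_1/2) < \tanh A/\tanh(\ell_1/2) = \cosh R$, contradicting $\tilde{x}$ being outside the caps.

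Your approach can also be repaired more directly. To reach a contradiction you only need \emph{one} of $\tilde{x}_1, \tilde{x}_2$ to lie in a lifted cap, so it suffices to bound $\min(r_1, r_2)$ rather than $r_1$. Taking $t_0 \le \ell_1/2$ without loss of generality (so $t_1 \le t_2$ and hence $r_2 = \min(r_1,r_2)$), your formulas $\sinh r_2 = \sinh\delta/\cosh t_2$ and $\sinh t_2 = \sinh(\ell_1 - t_0)\cosh\delta$, together with $\ell_1 - t_0 \ge \ell_1/2$ and $t_2 \le h$, give $\sinh^2 r_2 \le (\sinh^2 h - \sinh^2(\ell_1/2))/(\sinh^2(\ell_1/2)\cosh^2 h)$ by exactly the computation you carried out at the symmetric point; the inequality $\ell_1 - t_0 \ge \ell_1/2$ is what makes the symmetric case the worst case for $r_2$, with no Lagrangian analysis needed.
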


\begin{proof}
For a totally geodesic plane $\Pi\subset\mathbb{H}^3$, the
``Hadamard--Hermann theorem'' \cite{Hermann} implies that the
exponential map restricts to a diffeomorphism from the normal bundle
$\nu(\Pi)$ of $\Pi$ onto $\mathbb{H}^3$. 
Since an orientation of the bundle $\nu(\Pi)$ gives an
identification of its total space with $\Pi\times\mathbb{R}$, 
it follows that there is a diffeomorphism from $\Pi\times (-h,h)$ to a
metric open neighborhood of $\Pi$ in $\mathbb{H}^3$ taking $(x,t)$ to
$\alpha_x(t)$ for any $x\in\Pi$ and 
any 
$t\in (-h,h)$, where $\alpha_x\co\mathbb{R}\to\mathbb{H}^3$ is an arclength-parametrized geodesic intersecting $\Pi$ orthogonally at $\alpha_x(0) = x$ for a choice of $\alpha_x'(0)$ depending continuously on $x$.

Now 
if $N$ is a hyperbolic manifold with totally geodesic boundary as in
the statement, we let $\tN\subset\HH^3$ denote the universal covering
of $N$, we 
take $\Pi$ to be a component of $\partial \widetilde{N}$, 
we
 take $h\le \min\{A,\ell_2/2\}$, and 
we choose the orientation of the bundle $\nu(\Pi)$ in such a way that 
$\alpha_x'(0)$ points into $\widetilde{N}$ for each $x\in\Pi$. We
claim that for each $x\in\Pi$ outside the preimage of $C\cup C'$, and 
each 
$t\in[0,h)$, $\Pi$ is the unique closest component of
$\partial\widetilde{N}$ to $\alpha_x(t)\in\widetilde{N}$. In
particular, this implies that 
the quantity denoted
 $h_x$ in 
Definition \ref{collar def} is at least $h$.

Suppose the claim does not hold, fix some $x_0\in\Pi$ outside the
preimage of $C\cup C'$ for which it fails, and let $t_0$ be the
infimum of the set of $t\in[0,h)$ such that a component $\Pi'\ne \Pi$
of $\partial\widetilde{N}$ is at least as close to $\alpha_{x_0}(t)$
as $\Pi$. Since $t_0$ is infimal with this property,
$\alpha_{x_0}(t_0)$ has distance $t_0$ from $\Pi'$ as well as
$\Pi$. Let $\Pi^{\perp}$ be the totally geodesic plane containing
$\alpha_{x_0}(t_0)$ and the shortest geodesic 
segments 
$\tau$ and $\tau'$ joining it to each of $\Pi$ and $\Pi'$,
respectively. Then $\Pi^{\perp}$ intersects $\Pi$ and $\Pi'$
orthogonally in geodesics $\gamma$ and $\gamma'$, respectively, and
there is a pentagon $P$ in $\Pi^{\perp}$ with sides consisting of
$\tau$ and $\tau'$, segments of $\gamma$ and $\gamma'$, and the
shortest geodesic 
segment 
$\lambda$ joining $\gamma$ to $\gamma'$.

Note that $\lambda$ is necessarily orthogonal to each of $\gamma$ and
$\gamma'$, hence also to $\Pi$ and $\Pi'$, so it is a lift of a return
path of $N$. Its length is at most $2t_0<2h \le \ell_2$,
so $\lambda$ must have length 
$\ell_1$, which 
must be strictly
less than $\ell_2$ in this situation. 
Hence 
$\lambda$ is a lift of $\lambda_1$ and therefore is the center of a
lift of the muffin $M$. The pentagon $P$ is symmetric under a
reflection fixing $\alpha_{x_0}(t_0)$ and exchanging the sides
containing it. 
It is divided into two quadrilaterals by the axis of this reflection. Let $Q$ be the resulting Lambert quadrilateral that contains $x_0$, and let $r_0$ be the length of its side joining $x_0$ to an endpoint of $\lambda$. The sides abutting this one have lengths $\ell_1/2$ and $t_0$, so the quadrilateral rule gives
\[ \cosh r_0 = \frac{\tanh t_0}{\tanh(\ell_1/2)} < \frac{\tanh A}{\tanh(\ell_1/2)} = \cosh R. \]
The inequality above comes from the hypothesis that $t_0<h\le A$, and the quantity $R$ above is the one defined in  (\ref{R}). But this is the radius of the caps $C$ and $C'$ of $M$, so the inequality implies that $x_0\in C\cup C'$, contradicting our hypothesis. The claim follows.

The claim implies that the height-$h$ \redcollar\ of the preimage in $\Pi$ of $\partial N - (C\cup C')$ is contained in $\widetilde{N}$ and does not intersect the height-$h$ \redcollar\ of any other component of $\partial\widetilde{N}$. Therefore the universal cover induces an embedding of the height-$h$ \redcollar\ of $\partial N - (C\cup C')$ in $N$.

We now recall 
from Fact \ref{UAW} 
that $U_1 = A + W$. 
 Here $U_1$, 
defined in Lemma \ref{You 'kay?}, is 
a lower bound on the
 distance between a lift of $\lambda_1$ and any component of
 $\partial\widetilde{N}$ not containing 
either of
its endpoints. 
Since $\mathrm{Muf}_{\ell_1}$ is contained in the $W$-neighborhood of its center, and the collar height $h$ is less than $A$, the height-$h$ \redcollar\ of $\Pi$ in $\widetilde{N}$ thus does not intersect any lift of $M$ that has no cap on $\Pi$. This implies that the height-$h$ \redcollar\ of $\partial N - (C\cup C')$ in $N$ is disjoint from $M$.
\end{proof}

The second main result of this section is a counterpart to Proposition \ref{outside Muf_ell1} that gives a criterion for setting the height of a collar of $\partial N$ that interacts well with a copy of $M(\ell_2,S)$.

\begin{proposition}\label{Et tu?}
Regard $R$ as in (\ref{R}) and $S = X_{12}^1 - R$ as in Lemma
\ref{disk radii}, for $X_{ij}^k$ as in (\ref{Xijk}), as functions of variables $x = \cosh\ell_1$ and $y = \cosh\ell_2$.
Suppose $N$ is an orientable hyperbolic $3$-manifold with $\partial N$ compact, connected, totally geodesic and of genus $2$, such that the lengths $\ell_1$ and $\ell_2$ of its shortest and second-shortest return paths satisfy 
\[ a\leq \cosh\ell_1 \leq b\le 2\quad \mbox{and}\quad c\le \cosh\ell_2 \le d \le 3, \]
and let $M$ 
denote
the projection to $N$ of a copy of $M(\ell_2,S)$ in $\widetilde{N}$ centered at a lift of $\lambda_2$.
Let $U_2^0$ and $W_S^0$ be defined
as in Proposition \ref{outside Muf_ell1}, and set
$H^0 = U_2^0 - W^0_S$. If $H^0 > 0$, 
the intersection 
of
the interior of $M$ 
with an
\redcollar\ of $\partial N$ in $N$ is the projection to $N$ of a union of \redcollar s of the caps of $M(\ell_2,S)$. 
\end{proposition}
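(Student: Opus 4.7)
The plan is to work in the universal cover $\widetilde N\subset\mathbb H^3$. I would fix a lift $\tilde M$ of $M$ centered at some lift $\tilde\lambda_2$ of $\lambda_2$, write $\Pi_1,\Pi_2\subset\partial\widetilde N$ for the two components containing the endpoints of $\tilde\lambda_2$, and set $\tilde C_i=\tilde M\cap\Pi_i$ for $i=1,2$. A lift of an open collar of $\partial N$ is a disjoint union, indexed by components $\Pi$ of $\partial\widetilde N$, of the families of perpendicular rays from $\Pi$ into $\widetilde N$. The goal is to show that $\inter\tilde M$ meets only the rays from $\Pi_1$ and $\Pi_2$, and meets those only over $\tilde C_1$ and $\tilde C_2$.

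For any component $\Pi\ne\Pi_1,\Pi_2$, my first step would be to invoke Lemma \ref{You 'kay?} to get $\mathrm{dist}(\Pi,\tilde\lambda_2)\ge U_2$, then use the hypotheses $\cosh\ell_1\ge a$ and $\cosh\ell_2\le d$ to conclude $U_2\ge U^0_2$. Next, I would reuse the monotonicity argument from the proof of Proposition \ref{outside Muf_ell1}---which rests on Lemma \ref{S derivs} together with the half-angle identity for $\tanh$---to bound the waist radius $W_2$ of $\tilde M$ above by $W^0_S$. Fact \ref{basic muffin} then places $\tilde M$ in the closed $W_2$-neighborhood of $\tilde\lambda_2$, so $\mathrm{dist}(\Pi,\tilde M)\ge U^0_2-W^0_S=H^0>0$, and no perpendicular ray from $\Pi$ of length less than $H^0$ can meet $\inter\tilde M$.

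For $\Pi_i$ with $i=1,2$, my plan is to use the other clause of Fact \ref{basic muffin}, namely $\tilde M\subset\pi_i^{-1}(\tilde C_i)$ for $\pi_i\co\mathbb H^3\to\Pi_i$ the orthogonal projection. A perpendicular ray from $x\in\Pi_i$ lies in $\pi_i^{-1}(x)$, so it meets $\tilde M$ if and only if $x\in\tilde C_i$. For such $x$, the initial tangent to the ray is perpendicular to the totally geodesic cap $\tilde C_i$ and points into $\widetilde N$, so the ray enters $\inter\tilde M$ immediately. Assembling these intersections over $x\in\tilde C_i$ would yield an open collar of $\tilde C_i$ in the sense of Definition \ref{collar def}, and projecting to $N$ via the universal covering would complete the argument.

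I expect the main technical hurdle to be the waist-radius bound $W_2\le W^0_S$: it relies on the two-variable monotonicity of $S$ from Lemma \ref{S derivs} over the rectangle $[a,b]\times[c,d]$, and is best imported verbatim from the proof of Proposition \ref{outside Muf_ell1}. A more delicate point to handle carefully is that for $x$ on the boundary circle $\partial\tilde C_i$, where $\partial\tilde M$ has a corner where cap and waist surface meet, the perpendicular ray still enters $\inter\tilde M$ for small positive $t$; this should follow from the local axial structure of the muffin but deserves an explicit remark.
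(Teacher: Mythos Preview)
Your proposal is correct and follows essentially the same route as the paper's own proof: split the lifted collar over components of $\partial\widetilde N$, use Fact \ref{basic muffin}'s orthogonal-projection clause for $\Pi_1,\Pi_2$, and for any other component $\Pi_3$ combine Lemma \ref{You 'kay?} ($U_2\ge U_2^0$), the waist-radius bound $W_2\le W_S^0$ imported from the proof of Proposition \ref{outside Muf_ell1}, and the triangle inequality. The paper does not separately address the corner at $\partial\tilde C_i$, treating it as implicit in the rotational structure; your extra remark there is harmless but not needed since the statement concerns the \emph{interior} of $M$.
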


\begin{proof} Let $\widetilde{M}$ be a copy of $M(\ell_2,S)$ in $\widetilde{N}$
  centered at a lift $\tilde\lambda_2$ of $\lambda_2$, and let $\Pi_1$
  and $\Pi_2$ be the components of $\partial\widetilde{N}$ containing
  the caps of $\widetilde{M}$. For $i = 1$ or $2$, since $\widetilde{M}$ is contained in the
  preimage of its cap in $\Pi_i$ under the orthogonal projection
  $\mathbb{H}^3\to\Pi_i$ it intersects a height-$H^0_2$ collar
  neighborhood of $\Pi_i$ in a \redcollar\  of its cap in
  $\Pi_i$. Therefore the projection of $\widetilde{M}$ to $N$ intersected with a
  height $H^0$ collar of $\partial N$ contains the union of the collar
  neighborhoods of its caps in $\Pi_1$ and $\Pi_2$, and any other
  point in this intersection 
lies in 
the projection of the intersection of
  $\widetilde{M}$ with the height-$H^0$ collar of another component $\Pi_3$ of
  $\partial\widetilde{N}$. 
  We will show below that
the latter 
intersection is empty, for any given such $\Pi_3$.

Recall from the proof of Lemma \ref{no x bdry} that $\widetilde{M}$ is
contained in a 
radius-$W_2$ 
neighborhood of its center $\tilde\lambda_2$, where 
$W_2$ 
is its waist radius, and from Lemma \ref{You 'kay?} that 
the quantity $U_2$ defined in that lemma is a lower  bound for 
the distance from $\tilde\lambda_2$ to $\Pi_3$.
From the proof of Proposition \ref{outside Muf_ell1}, for $(x,y)$ in
the rectangle $[a,b]\times[c,d]$, 
$W_2$ 
is bounded above by $W^0_S$ and $U_2$ is bounded below by 
$U_2^0$. 
Therefore by the triangle inequality, no point lies in both $\widetilde{M}$ and a height-$H^0$ \redcollar\  of $\Pi_3$.
\end{proof}

\section{Volume bounds}\label{bound vol}

\numberwithin{para}{section}

It is recorded in Lemma 3.3 of \cite{KM} that the volume of $\mathrm{Muf}_{\ell_1}$ decreases with $\ell_1$. The Lemma below records a related derivative for later reference.

\begin{lemma}\label{Mufell1 volume change} For the muffin
  $\mathrm{Muf}_{\ell_1}$ from 
Definitions and Remarks \ref{M ell one}, 
  depending
  on a parameter $\ell_1$, 
let $\mathit{VM}(x)$ record its volume as a function of $x = \cosh\ell_1$. Then\begin{align}\label{dvol Mufell1}
	\mathit{VM}'(x) = 2\pi A \frac{d}{dx} (\cosh R) < 0
\end{align}
where $R$ as in (\ref{R}), defined by $\cosh R(x) = \sqrt{1+1/(2x-2)}$ as a function of $x$, is the cap radius of $\mathrm{Muf}_{\ell_1}$ and $A$ is its altitude length, defined by $\cosh (2A) = (4x+1)/3$.
\end{lemma}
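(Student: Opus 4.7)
The plan is to express $\mathit{VM}(x)$ as an iterated integral in Fermi coordinates about the central geodesic $\lambda$ of $\mathrm{Muf}_{\ell_1}$, and then differentiate by exploiting an arithmetic identity specific to the choice (\ref{R}) of $R = R(\ell_1)$.

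First I would set up Fermi coordinates $(s, r, \theta)$ about $\lambda$, in which the volume form on $\mathbb{H}^3$ is $\cosh r\,\sinh r\,ds\,dr\,d\theta$. As a solid of revolution about $\lambda$, the muffin is described by $s \in [0, \ell_1]$, $r \in [0, r(s)]$, $\theta \in [0, 2\pi]$, where $r(s)$ is the perpendicular distance from the point at arclength $s$ on $\lambda$ to the side $V_3V_4$ of the generating pentagon $P$. Integrating out $\theta$ and $r$ and using reflective symmetry of $P$ across the perpendicular bisector of $\lambda$ gives $\mathit{VM}(x) = 2\pi \int_0^{\ell_1/2} \sinh^2 r(s)\,ds$. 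Applying the Lambert quadrilateral formula (in the same form as (\ref{muffin relations})) to the quadrilateral with vertices $V_2$, $V_3$, the point on $\lambda$ at arclength $s$, and its perpendicular lift to $V_3V_4$ yields $\tanh r(s) = \cosh s \tanh R$, whence $\sinh^2 r(s) = \sinh^2 R \cosh^2 s/(1 - \sinh^2 R \sinh^2 s)$.

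Next I would substitute $u = \sinh R \sinh s$ to transform the integral into $\mathit{VM}(x) = 2\pi \int_0^{u_\ast} \sqrt{\sinh^2 R + u^2}/(1 - u^2)\,du$ where $u_\ast = \sinh R \sinh(\ell_1/2)$. The crucial identity, coming from (\ref{R}) in the form $\sinh^2 R = 1/(2(x-1))$ together with $\sinh^2(\ell_1/2) = (x-1)/2$, is that $u_\ast = 1/2$ is independent of $x$. The integrand's only $x$-dependence is then through $\sinh^2 R$, so differentiation under the integral gives
\[
\mathit{VM}'(x) = -\frac{\pi}{2(x-1)^2}\int_0^{1/2}\frac{du}{(1 - u^2)\sqrt{\sinh^2 R + u^2}}.
\]
Reversing the substitution, then setting $v = \tanh s$ followed by $w = v \cosh R$, and using $\tanh A = \cosh R \tanh(\ell_1/2)$ for the new upper limit $\tanh A$, collapses the integral to $A/\cosh R$, giving $\mathit{VM}'(x) = -\pi A/(2(x-1)^2 \cosh R)$.

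Finally I would differentiate $\sinh R = (2(x-1))^{-1/2}$ directly to get $\cosh R \cdot R'(x) = -(2(x-1))^{-3/2}$, so $(\cosh R)'(x) = \sinh R \cdot R'(x) = -1/(4(x-1)^2 \cosh R)$. Comparing with the expression for $\mathit{VM}'(x)$ established above confirms $\mathit{VM}'(x) = 2\pi A (\cosh R)'(x)$; the strict inequality follows from positivity of $A$, $(x-1)^2$, and $\cosh R$. The principal obstacle is recognizing the identity $\sinh R \sinh(\ell_1/2) = 1/2$ that is forced by (\ref{R}) and makes the upper limit of integration constant in $x$; once that is visible, everything else reduces to standard hyperbolic substitutions and differentiation.
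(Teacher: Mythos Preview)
Your proof is correct, but it takes a genuinely different route from the paper's. The paper simply quotes the closed-form volume formula from Fenchel,
\[
\mathit{VM}(x) = 2\pi\left(A\cosh R - \tfrac{\ell_1}{2}\right) = \pi\left((2A)\cosh R - \cosh^{-1}(x)\right),
\]
and differentiates by the product rule; the cross-terms cancel because of the calculus identity $(2A)'\cosh R = 1/\sqrt{x^2-1}$, leaving $\mathit{VM}'(x) = 2\pi A\,[\cosh R]'$. Your approach instead builds the volume integral from scratch in Fermi coordinates and exploits the fact that the particular choice (\ref{R}) forces $\sinh R\,\sinh(\ell_1/2) = 1/2$, so the upper limit $u_\ast$ is constant in $x$ and differentiation under the integral sign is clean. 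The paper's argument is much shorter since it outsources the integration to Fenchel, while yours is self-contained and in effect \emph{explains} why the product-rule cancellation occurs: the identity $u_\ast = 1/2$ is the geometric mechanism behind it. Either way the final expression $\mathit{VM}'(x) = -\pi A/(2(x-1)^2\cosh R) = 2\pi A\,(\cosh R)'$ agrees.
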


\begin{proof}
The formula above for $R$ comes from (\ref{R}), 
the one for $A$ can be deduced from (\ref{muffin relations}),
 and each matches one given on the
  first page of \cite[\S 3]{KM}. 
According to 
Lemma 3.3
  of \cite{KM},
for $A$ and $R$ as above, 
we have:
\begin{align}\label{Muf_l1}  \mathrm{vol}(\mathrm{Muf}_{\ell_1}) = 2\pi \left( A\cosh R - \frac{\ell_1}{2} \right) = \pi\left( (2A)\cosh R - \cosh^{-1}(x) \right).  \end{align}
(As is pointed out in the proof 
of \cite[Lemma 3.3]{KM},
this equality is included in
the formula stated on p.~213 of \cite{Fenchel}, which itself records an old result in hyperbolic geometry.)

Taking a derivative with respect to $x$ we obtain:
$$ \mathit{VM}'(x) = \pi\left[(2A)[\cosh R]' + (2A)'\cosh R - \frac{1}{\sqrt{x^2-1}}\right] = 2\pi A[\cosh R]' $$
The second 
equality 
above uses a calculus computation showing that $(2A)'\cosh R = \frac{1}{\sqrt{x^2-1}}$.  
\end{proof}

\begin{proposition}\label{SglMfn} Let $N$ be an orientable hyperbolic $3$-manifold with $\partial N$ compact, connected, totally geodesic, and of genus $2$, and let $x = \cosh\ell_1$ and $y=\cosh\ell_2$, where $\ell_1$ and $\ell_2$ are the respective lengths of the shortest and second-shortest return paths of $N$. For $x\in[1.24,1.5]$, define a piecewise-constant function $Y$ of $x$ as follows:  \begin{itemize}
  \item  for $x \in [1.24,1.25)$, $Y(x) = 1.986$;
  \item  for $x\in [1.25,1.27)$, $Y(x) =  1.9$;
  \item for $x\in [1.27,1.3)$, $Y(x) = 1.8$;
  \item  for $x \in [1.3,1.35)$, $Y(x) = 1.68$;
  \item for $x \in [1.35,1.4)$, $Y(x) =  1.59$; 
  \item for $x\in [1.4,1.45)$, $Y(x) =  1.55$; and
  \item for $x\in [1.45,1.5)$, $Y(x) = 1.52$.
\end{itemize}
If $x\in [1.24,1.5)$ and $y\ge Y(x)$, or if $x\ge1.5$, then $\mathrm{vol}(N)\geq7.4$.\end{proposition}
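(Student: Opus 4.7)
The plan is to combine the Kojima--Miyamoto single-muffin-plus-collar bound (\ref{KM vol ineq}) with the second-muffin construction from Propositions \ref{outside Muf_ell1} and \ref{Et tu?}. For $x = \cosh\ell_1 \geq 1.5$ the classical single-muffin bound should already deliver a volume exceeding $7.4$; for $x \in [1.24, 1.5)$ I would invoke the second muffin, with the piecewise thresholds in $Y(x)$ chosen to make both the two-muffin embedding criteria and the resulting volume estimate succeed on each subinterval.

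Specifically, for each of the seven subintervals $[a, b) \subset [1.24, 1.5)$ with corresponding threshold $Y(x) = c$, I would verify on the rectangle $[a,b] \times [c, d]$ (taking $d = 3$, or a smaller upper bound from Proposition \ref{DeSh ell_2} if a $(1,1,1)$-hexagon can be excluded) the three hypotheses $W_S^0 \leq U_2^0$, $W_R^0 + W_S^0 \leq T_{12}^0$, and $2 W_S^0 \leq T_{22}^0$ of Proposition \ref{outside Muf_ell1}. This produces a copy of the muffin $M(\ell_2, S)$, with $S = X_{12}^1 - R$, embedded in $N$ disjointly from the copy of $\mathrm{Muf}_{\ell_1}$ supplied by Lemma \ref{KM muffin embed}. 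Checking also that $H^0 = U_2^0 - W_S^0 > 0$ invokes Proposition \ref{Et tu?}, which combined with Proposition \ref{embark} embeds a collar of $\partial N$ minus the four muffin caps, with height $h = \min\{A, H^0, \ell_2/2\}$, disjoint from the interiors of both muffins. The bound
\[ \vol(N) \geq \vol(\mathrm{Muf}_{\ell_1}) + \vol(M(\ell_2, S)) + V_{\mathrm{collar}} \]
then applies. Using Lemma \ref{Mufell1 volume change} for the first summand, and monotonicity computations along the lines of Lemma \ref{S derivs} and the quadrilateral relations (\ref{muffin relations}) for the others, I would bound each summand below on the rectangle by its value at an appropriate corner and then verify numerically on each subinterval that the total exceeds $7.4$.

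For $x \geq 1.5$, no second muffin is needed: as $x$ grows, the muffin volume $\vol(\mathrm{Muf}_{\ell_1})$ decreases (Lemma \ref{Mufell1 volume change}) but its cap radius $R$ also decreases, so the area of $\partial N - (C \cup C')$ and the available collar altitude $A$ both grow. A numerical check at $x = 1.5$ using (\ref{KM vol ineq}) together with monotonicity should therefore confirm a volume exceeding $7.4$ on all of $[1.5, \infty)$. The main obstacle will be the leftmost subintervals, closest to the no-$(1,1,1)$-hexagon threshold $\cosh\ell_1 > 1.23$ of Proposition \ref{DeSh ell_2}: there the caps of $\mathrm{Muf}_{\ell_1}$ are largest, leaving little boundary area for a collar, and the second-muffin embedding inequalities are tightest. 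The piecewise nature of $Y(x)$ reflects the delicate balance, as $\cosh\ell_1$ shrinks, between raising $\ell_2$ to gain second-muffin height and keeping $S = X_{12}^1 - R$ (hence the second muffin's cap) from collapsing.
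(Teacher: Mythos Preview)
Your approach inverts the paper's logic. The paper proves Proposition \ref{SglMfn} using \emph{only} the single-muffin-plus-collar bound (\ref{KM vol ineq}); the second muffin never appears here. The double-muffin machinery of Propositions \ref{outside Muf_ell1}, \ref{Et tu?}, and \ref{DblMfn} is reserved for the complementary region $y < Y(x)$, handled later in Theorem \ref{one two three five}.

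The key observation you are missing is that the right-hand side of (\ref{KM vol ineq}), viewed as a function $V(x,H)$ with $H = \min\{A,\ell_2/2\}$, is increasing in $H$ for fixed $x$. Since the thresholds $Y(x)$ are chosen so that $Y(x) \le \cosh(2A(x))$ on each subinterval (one checks $\cosh(2A(1.24)) = 1.98\bar6 > 1.986$, and $A$ increases with $x$), the condition $y \ge Y(x)$ gives $H \ge \cosh^{-1}(Y(x))/2$. A short computation then shows $\partial V/\partial x > 0$ when $H$ is held at this value, so on each subinterval the minimum of $V$ occurs at the left endpoint; the paper tabulates these seven values and they all exceed $7.4$. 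For $x\ge 1.5$ your sketch is essentially correct and matches the paper.

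Your proposed two-muffin argument also has a genuine gap: you take $d=3$ as an upper bound on $y$, but the hypothesis $y\ge Y(x)$ imposes no upper bound at all (and Proposition \ref{DeSh ell_2} is unavailable since no $(1,1,1)$-hexagon hypothesis is present). So your rectangle $[a,b]\times[c,3]$ does not cover the region in question, and the criteria of Proposition \ref{outside Muf_ell1} require $d\le 3$. You could in principle patch this by using the single-muffin bound for $y>3$, but at that point the single-muffin bound already does the whole job, and the second muffin is superfluous.
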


\begin{proof}  To prove this we will use the fundamental volume bound of \cite{KM}, 
a result which implicitly relied on Proposition \ref{embark}, and 
which we recorded as Lemma 3.2 in our prior work \cite{DeSh}.  Taking $R$ as in (\ref{R}) as a function of $x = \cosh\ell_1$, this is: \begin{align}\label{KM vol ineq} 
\mathrm{vol}(N) \geq \mathit{VM}(x) + \pi(2-\cosh R)(2H + \sinh(2H)),  \end{align}
where $\mathit{VM}(x)$ records the volume of $\mathrm{Muf}_{\ell_1}$ as in Lemma \ref{Mufell1 volume change}, and for $A$ equal to the altitude length of $\mathrm{Muf}_{\ell_1}$ as described there, $H = \min\{A,\ell_2/2\}$.

Let us regard the right-hand side of (\ref{KM vol ineq}) as defining a
function $V(x,H)$ of two independent variables. For fixed $x$, this
function clearly increases with $H$. Moreover, the formula for $A$ in
Lemma \ref{Mufell1 volume change} defines it as an increasing function
of $x$, and a computation shows that when $x = 1.24$,
$\cosh(2A)=1.98\bar{6}$. Therefore if $N$ is as in the Proposition's
hypotheses, with $x = \cosh\ell_1 \in [1.24,1.5]$ and $y =
\cosh(\ell_2)\geq Y(x)$, for $Y(x)$ defined in the Proposition, then 
$H\ge\cosh^{-1}(Y(x))/2$ and therefore
$\mathrm{vol}(N) \geq V(x,\cosh^{-1}(Y(x))/2)$. 

We claim that for any $x\in [1.24,1.5]$, the values of $V(x,\cosh^{-1}(Y(x))/2)$ on any interval of $x$ where $Y(x)$ is constant are minimized at the left endpoint of that interval. This follows from a computation. The second equality below uses  the conclusion of Lemma \ref{Mufell1 volume change}.
\[ \frac{\partial V}{\partial x}(x,H) = \mathit{VM}'(x) - \pi[\cosh R]'(2H+\sinh(2H)) =  \pi[\cosh R]' \left(2A -2H-\sinh(2H)\right). \]
Since $\cosh R$ decreases with $x$, this quantity is positive as long as $2A < 2H+\sinh(2H)$. For $x \le 1.5$ we have $2A < \cosh^{-1}(2.34) < 1.5$ and for $H \ge \cosh^{-1}(1.5)/2$,
\[ 2H + \sinh(2H) 
\ge
 \cosh^{-1}(1.5) + \sqrt{1.5^2-1} > 2.08.
 \]
Therefore since $A = A(x)$ is increasing and $Y(x)$ is nonincreasing on the interval $[1.24,1.5]$, $\partial V/\partial x(x,Y(x)) > 0$ for all $x$ here, and the claim follows.

\begin{table}[ht] \begin{center} \begin{tabular}{lllll}
  $x = \cosh \ell_1$\ \  & $Y(x)$\ \  & $V(x,H)$  \\ \hline
  $1.24$ & $1.986$ & $7.406$  \\
  $1.25$ & $1.9$ & $7.406$ \\
  $1.27$ & $1.8$ & $7.438$ \\
  $1.3$ & $1.68$ & $7.407$ \\
  $1.35$ & $1.59$ & $7.407$ \\
  $1.4$ & $1.55$ & $7.433$ \\
  $1.45$ & $1.52$ & $7.431$
\end{tabular} \end{center}
\caption{Values of $V(x,H)$, truncated after $3$ decimal places, for $H = \cosh^{-1}(Y(x))/2$.} 
\label{Vh values}
\end{table}

Table \ref{Vh values} collects the values $V(x,H)$, truncated after
the third decimal place, for $H = \cosh^{-1}(Y(x))/2$ at left
endpoints of the intervals where $Y(x)$ is constant.  Since these
values are all 
greater than 
$7.4$, by the claim and the observations above we have proven the Proposition's assertions when $x = \cosh\ell_1\in [1.24,1.5)$.

It remains to address the case $x\ge 1.5$. Here we recall that by definition, $\ell_2 \geq \ell_1$, and since $\ell_1/2 < A$, it follows from (\ref{KM vol ineq}) that $\mathrm{vol}(N) \ge V(x,\ell_1/2)$. This is exactly the function called $V$ in the proof of \cite[Proposition 3.7]{DeSh} where it is shown to be increasing for $\cosh \ell_1 \geq 1.439$. As its value at $\cosh \ell_1 = 1.5$ is $7.429$, truncated after three decimal places, we have that $\mathrm{vol}(N) \ge 7.4$ whenever $\cosh\ell_1\ge 1.5$.\end{proof}

We lack a volume bound for a manifold $N$ with totally geodesic boundary of genus $2$ if $x = \cosh\ell_1 < 1.24$ or if $x\in[1.24,1.5)$ and 
$\max\{\cosh E,\cosh M,\cosh \ell_1\} \le \cosh\ell_2 \le Y(x)$, 
where $Y$ is defined as above and $E$ and $M$ are defined as in 
Proposition \ref{KM ell_2}. 
The graphs of
$\cosh E$, $\cosh M$, $\cosh\ell_1$ and $Y(x)$ as functions of
$x=\cosh\ell_1$ are 
pictured in Figure \ref{graphs}.

\begin{figure}[ht]
\begin{tikzpicture}[xscale=20, yscale=5]
	\draw [help lines, xstep=0.05, ystep=0.1] (1.22,1.31) grid (1.495,1.89);
	\draw [thick] plot file {l2Ebound.txt};
	\node at (1.22,1.64) {$\cosh E$};
	\draw [thick] plot file {l2Mbound.txt};
	\node at (1.24,1.46) {$\cosh M$};
	\draw [thick, domain=1.33:1.5] plot (\x,\x);
	\node [left] at (1.335,1.35) {\small $\cosh\ell_1$};
	
	\draw [thick] (1.25,1.9) -- (1.27,1.9);
	\draw [thick] (1.27,1.8) -- (1.3,1.8);
	\draw [thick] (1.3,1.68) -- (1.35,1.68);
	\draw [thick] (1.35,1.59) -- (1.4,1.59);
	\draw [thick] (1.4,1.55) -- (1.45,1.55);
	\draw [thick] (1.45,1.52) -- (1.5,1.52);

	\draw[->] (1.21,1.28) -- (1.5,1.28) node [right] {$\cosh\ell_1$};
	\foreach \x in {1.25,1.3,1.35,1.4,1.45}
		\draw (\x,1.27) -- (\x,1.28) node [below] {$\x$};
	\draw[->] (1.18,1.35) -- (1.18,1.85) node [above left] {$\cosh\ell_2$};
	\foreach \y in {1.4,1.5,1.6,1.7,1.8}
		\draw (1.177,\y) -- (1.18,\y) node [left] {$\y$};
\end{tikzpicture}

\caption{$Y(x)$ versus $\max\{\cosh E,\cosh M,\cosh \ell_1\}$}
\label{graphs}
\end{figure}
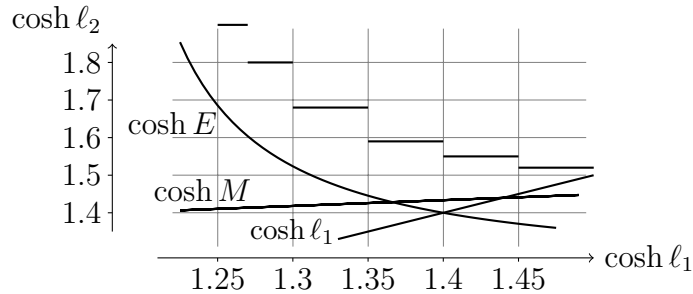

We address these regions with double-muffin volume bounds. First we record bounds on the volume of $M(\ell_2,S)$ over a rectangle of possible $\ell_1$- and $\ell_2$-values.

\begin{lemma}\label{second muffin volume} Regard $R$ as in (\ref{R}) and $S = X_{12}^1 - R$ as in Lemma \ref{disk radii}, for $X_{ij}^k$ as in (\ref{Xijk}), all as functions of $x = \cosh\ell_1$ and $y = \cosh\ell_2$. If $\ell_1$ and $\ell_2$ satisfy
\[ a\leq \cosh\ell_1 \leq b\le 2\quad \mbox{and}\quad c\le \cosh\ell_2 \le d \le 3, \]
then\begin{align}\label{VM two nought}
	\mathrm{vol}(M(\ell_2,S)) \ge \mathit{VM}_2^0 \doteq 2\pi(A^0\cosh S(a,d) - \cosh^{-1}(d)/2),   \end{align}
where $A^0 = \tanh^{-1} \left(\cosh\left( S(a,d)\right)\sqrt{\frac{c-1}{c+1}}\right)$.
\end{lemma}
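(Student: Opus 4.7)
The plan is to apply the Fenchel-type volume formula for hyperbolic muffins (used to derive the first equality of (\ref{Muf_l1}) in the proof of Lemma \ref{Mufell1 volume change}), which is a general formula for the solid of rotation of a Lambert quadrilateral about its base. Applied to $M(\ell_2, S)$, it gives
\[ \mathrm{vol}(M(\ell_2, S)) = 2\pi\bigl(A\cosh S - \ell_2/2\bigr), \]
where the side altitude $A$ of $M(\ell_2, S)$ is determined by (\ref{muffin relations}) as $A = \tanh^{-1}\!\bigl(\cosh S\cdot\tanh(\ell_2/2)\bigr)$. The desired bound $\mathit{VM}_2^0$ will then follow by lower-bounding the positive term $A\cosh S$ and upper-bounding the subtracted $\ell_2/2$ separately over the rectangle $[a,b]\times[c,d]$.

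The bound on $\ell_2/2$ is immediate: $\ell_2 = \cosh^{-1}(y)$ is increasing in $y$, so $\ell_2/2 \le \cosh^{-1}(d)/2$, which is exactly the subtracted quantity in $\mathit{VM}_2^0$.

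For the lower bound on $A\cosh S$, I would handle the two factors separately. By Lemma \ref{S derivs}, $S$ is strictly increasing in $x$ and strictly decreasing in $y$ on the rectangle, so $S\ge S(a,d)$, and hence $\cosh S \ge \cosh S(a,d)$. Independently, $\tanh(\ell_2/2) = \sqrt{(y-1)/(y+1)}$ is increasing in $y$, so $\tanh(\ell_2/2)\ge \sqrt{(c-1)/(c+1)}$. Multiplying these two positive lower bounds and invoking the monotonicity of $\tanh^{-1}$ yields $A\ge A^0$. Combining, $A\cosh S \ge A^0\cosh S(a,d)$, and therefore
\[ \mathrm{vol}(M(\ell_2, S)) \ge 2\pi\bigl(A^0\cosh S(a,d) - \cosh^{-1}(d)/2\bigr) = \mathit{VM}_2^0. \]

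The proof is essentially a direct calculation and I do not anticipate any serious obstacle. The only subtle point is that although $\cosh S$ and $\tanh(\ell_2/2)$ attain their respective minima at \emph{different} corners of the rectangle (namely $(a,d)$ and points with $y=c$), the product of independent lower bounds on the two positive factors nevertheless furnishes a valid (if perhaps slightly loose) lower bound on $A\cosh S$. Lemma \ref{S derivs} guarantees that $S(x,y)>0$ on the rectangle, so all quantities involved are well-defined.
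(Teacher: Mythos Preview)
Your proof is correct and follows essentially the same approach as the paper's: apply the Fenchel volume formula $\mathrm{vol}(M(\ell_2,S)) = 2\pi(A\cosh S - \ell_2/2)$, use Lemma~\ref{S derivs} to bound $S\ge S(a,d)$ and the half-angle identity to bound $\tanh(\ell_2/2)\ge\sqrt{(c-1)/(c+1)}$, hence $A\ge A^0$, and combine with $\ell_2/2\le\cosh^{-1}(d)/2$. The paper's version is slightly terser (it leaves the $\tanh(\ell_2/2)$ and $\ell_2/2$ bounds implicit), but the argument is the same.
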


\begin{proof} $M(\ell_2, S)$ has side altitude of length $A = \tanh^{-1}(\cosh S\tanh(\ell_2/2))$ by the formula (\ref{muffin relations}). By Lemma \ref{S derivs}, as a function $x$ and $y$, $S$ increases with $x$ and decreases with $y$ on the region in question. Hence it is bounded below by $S(a,d)$ there. It follows that $A$ is bounded below by $A^0$ as defined above on this region. 

The volume of $M(\ell_2,S)$ is 
given by the formula stated on p.~213 of \cite{Fenchel}, which we have
already quoted in connection with (\ref{Muf_l1}). In the present
context, the formula gives
\begin{align}\label{muffin volume}
  \mathrm{vol}(M(\ell_2,S)) = 2\pi(A\cosh S - \ell_2/2),  \end{align}

By combining (\ref{muffin volume}) with  the lower bounds on $A$ and
$S$ established  above, we now obtain
 the lower bound on $\mathrm{vol}(M(\ell_2,S))$ given in the Lemma's statement. 
\end{proof}

We now combine the volume of $M(\ell_2,S)$ with existing ingredients to give the main double-muffin volume bound:

\begin{proposition}\label{DblMfn} Suppose $N$ is an orientable hyperbolic $3$-manifold with $\partial N$ compact, connected, totally geodesic and of genus $2$, such that the lengths $\ell_1$ and $\ell_2$ of its shortest and second-shortest return paths satisfy
\[  a\leq \cosh\ell_1 \leq b\le 2\quad \mbox{and}\quad c\le \cosh\ell_2 \le d \le 3. \]
For $W_R^0$, $W^0_S$, $T_{12}^0$ and $T_{22}^0$ as in (\ref{waisting away}), if $W^0_R+W^0_S\le T^0_{12}$ and $2W_S^0\le T^0_{22}$, then $\mathrm{vol}(N) \ge \mathit{VM}(b) + \mathit{VM}_2^0$, where $\mathit{VM}(x)$ is as in Lemma \ref{Mufell1 volume change} and $\mathit{VM}_2^0$ is defined in (\ref{VM two nought}). 

Moreover, for $H^0$ as defined in Proposition \ref{Et tu?} and $A(x)$ as in Lemma \ref{Mufell1 volume change}, taking $H = \min\{A(a),\cosh^{-1}(c)/2, H^0\}$ we have:\begin{align}
\label{plus collar} \mathrm{vol}(N) \ge \mathit{VM}(b) + \mathit{VM}_2^0 + \pi(3-\cosh R(a)-\cosh S(b,c))\left(2H + \sinh(2H)\right), \end{align}
where $R = R(x)$ from (\ref{R}) and $S = X_{12}^1 - R$ from Lemma \ref{disk radii}.
\end{proposition}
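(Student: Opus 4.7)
The plan is to embed two disjoint muffins in $N$ together with an \redcollar\ of the remainder of $\partial N$, and sum their volumes. For the first inequality, I would begin by applying Lemma \ref{KM muffin embed} to embed a copy $M_1$ of $\mathrm{Muf}_{\ell_1}$ in $N$; its volume $\mathit{VM}(\cosh\ell_1)$ is at least $\mathit{VM}(b)$ by the monotonicity recorded in Lemma \ref{Mufell1 volume change}. Next, the stated hypotheses $W^0_R + W^0_S \leq T^0_{12}$ and $2W^0_S \leq T^0_{22}$ (together with $W^0_S \leq U^0_2$, which is equivalent to the positivity of $H^0$ assumed in the moreover clause and should be regarded as implicit throughout) invoke Proposition \ref{outside Muf_ell1} to produce an embedding of a copy $M_2$ of $M(\ell_2, S)$ in $N$ without overlap with $M_1$; its volume is bounded below by $\mathit{VM}_2^0$ by Lemma \ref{second muffin volume}. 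Adding these two lower bounds gives the first inequality.

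For the moreover part, denote the caps of $M_1$ by $C_1, C_1'$ and those of $M_2$ by $C_2, C_2'$, all viewed in $\partial N$. Monotonicity observations show that $H = \min\{A(a), \cosh^{-1}(c)/2, H^0\}$ satisfies $H \leq \min\{A(\cosh\ell_1), \ell_2/2\}$ (since $A$ is increasing in $\cosh\ell_1$ and $\cosh\ell_2 \geq c$), so Proposition \ref{embark} yields an embedded height-$H$ \redcollar\ $K$ of $\partial N - (C_1 \cup C_1')$ in $N$, disjoint from the interior of $M_1$. Since $H \leq H^0$, Proposition \ref{Et tu?} then identifies $K \cap \mathrm{int}(M_2)$ precisely as the projection of the height-$H$ \redcollar s of $C_2$ and $C_2'$ inside $M_2$. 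Subtracting this from $K$ therefore yields the height-$H$ \redcollar\ $K'$ of $\partial N - (C_1 \cup C_1' \cup C_2 \cup C_2')$, embedded in $N$ and disjoint from both $M_1$ and $M_2$.

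It then remains to compute the volume of $K'$. By Gauss--Bonnet the total area of $\partial N$ is $4\pi$; each cap of $M_1$ has area $2\pi(\cosh R - 1)$ and each cap of $M_2$ has area $2\pi(\cosh S - 1)$, leaving $4\pi(3 - \cosh R - \cosh S)$. The standard computation of a totally geodesic collar using $\int_0^H \cosh^2 t \, dt = H/2 + \sinh(2H)/4$ then gives $\mathrm{vol}(K') = \pi(3 - \cosh R - \cosh S)(2H + \sinh(2H))$. Finally, since $\cosh R$ is decreasing in $\cosh\ell_1$ by (\ref{R}), and $\cosh S$ is, by Lemma \ref{S derivs}, increasing in $\cosh\ell_1$ and decreasing in $\cosh\ell_2$, on the rectangle $[a,b]\times[c,d]$ we have $\cosh R \leq \cosh R(a)$ and $\cosh S \leq \cosh S(b, c)$. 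Summing $\mathrm{vol}(M_1) + \mathrm{vol}(M_2) + \mathrm{vol}(K')$ with these bounds produces (\ref{plus collar}). The main point requiring care is the interaction of the \redcollar\ with the second muffin, which is exactly what Proposition \ref{Et tu?} controls, ensuring that the pieces fit together without double-counting volume.
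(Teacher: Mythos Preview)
Your proof is correct and follows essentially the same approach as the paper's: embed the two muffins via Lemma \ref{KM muffin embed} and Proposition \ref{outside Muf_ell1}, bound their volumes below using Lemmas \ref{Mufell1 volume change} and \ref{second muffin volume}, then use Propositions \ref{embark} and \ref{Et tu?} to add a non-overlapping collar whose volume is computed via Gauss--Bonnet and bounded using the monotonicity from Lemma \ref{S derivs}. You are in fact slightly more careful than the paper in flagging that the hypothesis $W_S^0 \le U_2^0$ (equivalently $H^0>0$) is needed to invoke Proposition \ref{outside Muf_ell1}, even for the first inequality; the paper's proof silently assumes this when it writes ``given the inequalities relating $W_R^0$, $W^0_S$, $T_{12}^0$ and $T_{22}^0$.''
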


\begin{proof} Given the inequalities relating $W_R^0$, $W^0_S$, $T_{12}^0$ and $T_{22}^0$, Proposition \ref{outside Muf_ell1} implies that the the universal cover $\widetilde{N}\to N$ embeds the interior of a copy of $M(\ell_2,S)$, centered at a lift of the second-shortest return path $\lambda_2$, disjointly from the interior of the copy $\mathrm{Muf}_{\ell_1}$ centered at $\lambda_1$. Therefore the volume of $N$ is at least the sum of these two muffins' volumes.

Recall from Lemma \ref{Mufell1 volume change} that the function $\mathit{VM}(x)$, which records the volume of $\mathrm{Muf}_{\ell_1}$ as a function of $x = \cosh\ell_1$, is decreasing. Thus since $x\le b$, $\mathit{VM}(x) \ge \mathit{VM}(b)$. By Lemma \ref{second muffin volume}, $\mathit{VM}_2^0$ bounds the volume of $M(\ell_2,S)$ below for these values of $\ell_1$ and $\ell_2$. Thus $\mathrm{vol}(N) \ge \mathit{VM}(b) + \mathit{VM}_2^0$.

We can augment this lower bound by adding the volume of a collar of the region in $\partial N$ outside the union of the caps of the embedded copies of $\mathrm{Muf}_{\ell_1}$ and $M(\ell_2,S)$, as long as the collar is not too high. If $B$ is the area of the region in $\partial N$ and $H$ is the collar height, then the collar volume $V$ satisfies $V = B \cdot (2H+\sinh(2H))/4$.  As $\partial N$ has area $4\pi$, by the Gauss-Bonnet theorem, and a hyperbolic disk of radius $r$ has area $2\pi(\cosh r - 1)$, we have:
\[ B = 4\pi - 4\pi(\cosh R -1) - 4\pi(\cosh S -1) = 4\pi \left(3 - \cosh R - \cosh S\right). \]
A collar height of $H = \min\{A,\ell_2/2\}$ was used in \cite{KM} and
\cite{DeSh}, where $A = A(x)$ 
is defined by 
$\cosh(2A) = (4x+1)/3$ as in
Lemma \ref{Mufell1 volume change}. 
For a copy of $\mathrm{Muf}_{\ell_1}$ in $N$ centered at $\lambda_1$, 
with caps $U$ and $U'$,
Proposition \ref{embark} asserts that a height-$H$ 
\redcollar\ of $\partial N- (U\cup U')$ 
is is embedded in
 $N$ disjointly from $\mathrm{Muf}_{\ell_1}$.
 Since $A$ is an increasing
 function of $x$, the quantity
$\min\{A(a),\cosh^{-1}(c)/2\}$ bounds $\min\{A,\ell_2/2\}$ below on the entire rectangle.

For $H^0$ as defined in Proposition \ref{Et tu?}, that result implies 
that 
a height-$H^0$ collar of $\partial N$ intersects the embedded copy of
$M(\ell_2,S)$ in the union of \redcollar s of its caps $V$ and $V'$ 
(noting that the bounds on $\ell_1$ and $\ell_2$ 
 in the hypothesis of the present proposition match 
 those in the hypothesis of
Proposition \ref{Et tu?}). 
Therefore a collar of $\partial N - (U\cup U'\cup V\cup V')$ with height $H = \min\{A(a),\cosh^{-1}(c)/2, H^0\}$ is embedded in $N$ without overlapping either the copy of $\mathrm{Muf}_{\ell_1}$ or of $M(\ell_2,S)$, so it contributes volume
\[ V = \pi(3-\cosh R - \cosh S)(2H + \sinh(2H)) \]
to the volume of $N$, independently of the muffins.

In order to bound $V$ below in terms of the given bounds on $x = \cosh\ell_1$ and $y = \cosh\ell_2$, we bound $R$ and $S$ above. Since $R$ decreases with $x$, its value is bounded above by $R(a)$. And by Lemma \ref{S derivs}, the value of $S$ is bounded above by $S(b,c)$. Together with the height bound $H$ described above, this gives the collar volume's contribution to the Proposition's lower bound on $\mathrm{vol}(N)$.
\end{proof}

We use Proposition \ref{DblMfn} and a numerical scheme, whose output is summarized in the picture below, to complete the proof of the first main result of this section. 

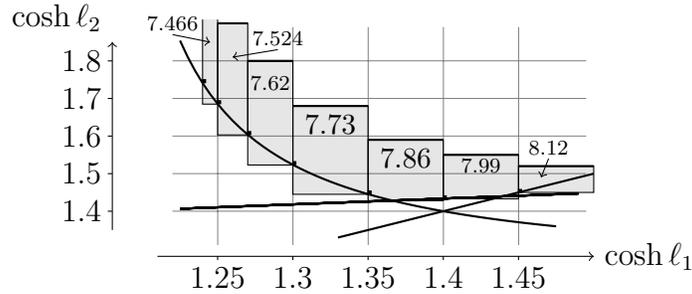
\begin{figure}[ht]
\begin{tikzpicture}[xscale=20, yscale=5]
	\draw [help lines, xstep=0.05, ystep=0.1] (1.22,1.31) grid (1.495,1.89);
	\draw [thick] plot file {l2Ebound.txt};
	\draw [thick] plot file {l2Mbound.txt};
	\draw [thick, domain=1.33:1.5] plot (\x,\x);
	
	\draw [thick] (1.25,1.9) -- (1.27,1.9);
	\draw [thick] (1.27,1.8) -- (1.3,1.8);
	\draw [thick] (1.3,1.68) -- (1.35,1.68);
	\draw [thick] (1.35,1.59) -- (1.4,1.59);
	\draw [thick] (1.4,1.55) -- (1.45,1.55);
	\draw [thick] (1.45,1.52) -- (1.5,1.52);

	\draw[->] (1.21,1.28) -- (1.5,1.28) node [right] {$\cosh\ell_1$};
	\foreach \x in {1.25,1.3,1.35,1.4,1.45}
		\draw (\x,1.27) -- (\x,1.28) node [below] {$\x$};
	\draw[->] (1.18,1.35) -- (1.18,1.85) node [above left] {$\cosh\ell_2$};
	\foreach \y in {1.4,1.5,1.6,1.7,1.8}
		\draw (1.177,\y) -- (1.18,\y) node [left] {$\y$};
		
	\fill [opacity=0.1] (1.24,1.685) -- (1.25,1.685) -- (1.25,1.91) -- (1.24,1.91) -- cycle;
	\draw [thin] (1.24,1.91) -- (1.24,1.685) -- (1.25,1.685) -- (1.25,1.91);
	\filldraw (1.24,1.742) rectangle ++(.002,.008);
	\node at (1.22,1.9) {\tiny $7.466$};
	\draw [->] (1.22,1.87) -- (1.245,1.85);
	
	\fill [opacity=0.1] (1.25,1.603) -- (1.27,1.603) -- (1.27,1.9) -- (1.25,1.9) -- cycle;
	\draw [thin] (1.25,1.603) -- (1.27,1.603) -- (1.27,1.9) -- (1.25,1.9) -- cycle;
	\node at (1.29,1.86) {\tiny $7.524$};
	\filldraw (1.25,1.686) rectangle ++(.002,.008);
	\draw [->] (1.29,1.83) -- (1.26,1.81);
	
	\fill [opacity=0.1] (1.27,1.523) -- (1.3,1.523) -- (1.3,1.8) -- (1.27,1.8) -- cycle;
	\draw [thin] (1.27,1.523) -- (1.3,1.523) -- (1.3,1.8) -- (1.27,1.8) -- cycle;
	\node at (1.285,1.74) {\tiny $7.62$};
	\filldraw (1.27,1.604) rectangle ++(.002,.008);

	\fill [opacity=0.1] (1.3,1.445) -- (1.35,1.445) -- (1.35,1.68) -- (1.3,1.68) -- cycle;
	\draw [thin] (1.3,1.445) -- (1.35,1.445) -- (1.35,1.68) -- (1.3,1.68) -- cycle;
	\node at (1.325,1.63) {\small $7.73$};
	\filldraw (1.3,1.524) rectangle ++(.002,.008);

	\fill [opacity=0.1] (1.35,1.428) -- (1.4,1.428) -- (1.4,1.59) -- (1.35,1.59) -- cycle;
	\draw [thin] (1.35,1.428) -- (1.4,1.428) -- (1.4,1.59) -- (1.35,1.59) -- cycle;
	\node at (1.375,1.54) {\small $7.86$};
	\filldraw (1.35,1.446) rectangle ++(.002,.008);

	\fill [opacity=0.1] (1.4,1.433) -- (1.45,1.433) -- (1.45,1.55) -- (1.4,1.55) -- cycle;
	\draw [thin] (1.4,1.433) -- (1.45,1.433) -- (1.45,1.55) -- (1.4,1.55) -- cycle;
	\node at (1.425,1.522) {\tiny $7.99$};
	\filldraw (1.4,1.433) rectangle ++(.002,.008);

	\fill [opacity=0.1] (1.45,1.45) -- (1.5,1.45) -- (1.5,1.52) -- (1.45,1.52) -- cycle;
	\draw [thin] (1.45,1.45) -- (1.5,1.45) -- (1.5,1.52) -- (1.45,1.52) -- cycle;
	\node at (1.47,1.57) {\tiny $8.12$};
	\filldraw (1.45,1.45) rectangle ++(.002,.008);
	\draw [->] (1.47,1.54) -- (1.465,1.49);
		
\end{tikzpicture}
\caption{Some volume bounds}
\label{volume bounds}
\end{figure}

\begin{theorem}\label{one two three five} Let $N$ be an orientable hyperbolic $3$-manifold with $\partial N$ compact, connected, totally geodesic, and of genus $2$. If the length $\ell_1$ of the shortest return path of $N$ satisfies $\cosh\ell_1 \ge 1.24$ then $N$ has volume at least $7.4$.\end{theorem}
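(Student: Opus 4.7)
The plan is to partition the parameter space $(x,y) = (\cosh\ell_1, \cosh\ell_2)$ with $x\ge 1.24$ into regions on each of which a volume lower bound of at least $7.4$ can be established, and then note that every admissible pair for $N$ lies in one such region. First, Proposition \ref{SglMfn} already handles two large pieces: the strip $x\ge 1.5$, and the region $x\in[1.24,1.5)$ with $y\ge Y(x)$. In both of these the single-muffin inequality (\ref{KM vol ineq}), with the collar of the complement of the caps of $\mathrm{Muf}_{\ell_1}$, already gives $\vol(N)\ge 7.4$.

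It remains to treat the ``leftover'' region where $x\in[1.24,1.5)$ and $\max\{x,\cosh E(x),\cosh M(x)\}\le y<Y(x)$; by Proposition \ref{KM ell_2} no points lie outside this lower envelope. Looking at Figure \ref{graphs}, the piecewise-constant shape of $Y$ together with the monotonicity of $\cosh E$, $\cosh M$, and $x\mapsto\cosh\ell_1$ lets me cover this leftover region by the seven axis-parallel sub-rectangles $[a,b]\times[c,d]$ depicted in Figure \ref{volume bounds}: on each sub-rectangle, $b-a$ is one of the intervals where $Y$ is constant, $d=Y(a)$, and $c$ is a value slightly below $\max\{b,\cosh E(b),\cosh M(b)\}$ that safely contains the relevant piece of the lower envelope.

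On each such sub-rectangle I intend to apply Proposition \ref{DblMfn}. This requires first checking the two muffin-embedding inequalities $W^0_R+W^0_S\le T^0_{12}$ and $2W^0_S\le T^0_{22}$, which depend only on $a,b,c,d$ via the explicit formulas (\ref{waisting away}); granted these, the proposition outputs the lower bound
\[ \vol(N)\ge \mathit{VM}(b)+\mathit{VM}_2^0+\pi(3-\cosh R(a)-\cosh S(b,c))\bigl(2H+\sinh(2H)\bigr), \]
with $H=\min\{A(a),\cosh^{-1}(c)/2,H^0\}$. A direct computation of this quantity on each of the seven rectangles should produce values running from roughly $7.466$ on $[1.24,1.25]\times[1.685,1.91]$ up to about $8.12$ on $[1.45,1.5]\times[1.45,1.52]$, all well above $7.4$; these are the numbers already reported in Figure \ref{volume bounds}.

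The main obstacle is numerical bookkeeping rather than new mathematics: one must verify on each of the seven rectangles both that the embedding hypotheses of Proposition \ref{DblMfn} genuinely hold (so that the second muffin $M(\ell_2,S)$ and its companion collar really do fit disjointly from $\mathrm{Muf}_{\ell_1}$), and that the collection of rectangles I use truly covers the leftover region after Proposition \ref{SglMfn} is applied. The conceptual input has been packaged into Propositions \ref{SglMfn}, \ref{KM ell_2}, and \ref{DblMfn}; the remaining work is to verify the numerical inequalities on a short, explicit list of rectangles and to check that these together with the two ``easy'' pieces exhaust the $(x,y)$-region cut out by the hypothesis $\cosh\ell_1\ge 1.24$.
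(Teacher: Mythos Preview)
Your high-level plan matches the paper's: use Proposition~\ref{SglMfn} for $x\ge 1.5$ and for $x\in[1.24,1.5)$ with $y\ge Y(x)$, then cover the leftover region and apply Proposition~\ref{DblMfn}. The gap is in granularity. You propose applying Proposition~\ref{DblMfn} once to each of the seven big rectangles and you read the numbers in Figure~\ref{volume bounds} (7.466, 7.524, \ldots) as the output of that single application. They are not: the paper subdivides each big rectangle $[a_0,b_0]\times[c_0,d_0]$ into tiny $0.001\times 0.001$ boxes, applies Proposition~\ref{DblMfn} to each tiny box, and records the \emph{minimum} of those values over the big rectangle. The little black squares in the figure mark the tiny boxes where these minima are attained.

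This matters because the bound of Proposition~\ref{DblMfn} degrades badly on large rectangles: each ingredient takes a worst-case value at a different corner. For instance, on $[1.24,1.25]\times[1.685,1.91]$ one has $\mathit{VM}(1.25)\approx 4.98$ and $\mathit{VM}_2^0\approx 1.24$ (using $S(1.24,1.91)$), giving a muffin sum of roughly $6.2$; and the collar coefficient $3-\cosh R(1.24)-\cosh S(1.25,1.685)$ is slightly negative, so (\ref{plus collar}) does no better. Thus the single big-rectangle application falls well short of $7.4$. The fine subdivision (and, for each tiny box, the check of the embedding hypotheses of Proposition~\ref{outside Muf_ell1} together with a comparison of the two bounds in Proposition~\ref{DblMfn}) is the actual content of the proof; the paper carries it out by computer and reports the results.
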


\begin{proof} Given Proposition \ref{SglMfn}, to prove the Theorem we must bound the volume of a manifold $N$ whose shortest return path length $\ell_1$ satisfies $1.24\le \cosh \ell_1 \le 1.5$, and whose second-shortest return path length $\ell_2$ satisfies $\cosh\ell_2\le Y(x)$, for the piecewise-constant function $Y$ of the Proposition. For this we implement the volume bounds of Proposition \ref{DblMfn} numerically as follows.

On each maximal subinterval $[a_0,b_0)$ of $[1.24,1.5)$ where $Y$ is constant, we find the minimum value of the function $\max\{\ell_1,E,M\}$ from Lemma \ref{KM ell_2}. Let $c_0$ be the hyperbolic cosine of this value, truncated after three decimal places. By Lemma \ref{KM ell_2}, for $N$ with $\cosh\ell_1\in [a_0,b_0]$, $\cosh\ell_2 \ge c_0$. Let $d_0$ be the value of $Y$ on $[a_0,b_0)$. We have written Python scripts that divide the rectangle $[a_0,b_0]\times[c_0,d_0]$ into subrectangles of size $0.001\times0.001$, and on each subrectangle $[a,b]\times[c,d]$ performs the following tasks.\begin{enumerate}
	\item Check whether
$[a,b]\times[c,d]$ intersects the \textit{possible subregion}: the subset of $[a_0,b_0]\times[c_0,d_0]$ consisting of $(x,y)$-values that satisfy the criterion of Lemma \ref{KM ell_2}. For instance, $E(x) = \max\{\ell_1,E,M\}$ for $x\leq 1.366$, so since $E$ is a decreasing function of $x$ here (cf.~\cite[Lemma 3.4]{DeSh}), for $b\leq 1.366$ this is tantamount to checking that $d \ge \cosh E(b)$.

If $[a,b]\times[c,d]$ does not intersect the possible subregion then
we ignore it since in this case, by Lemma \ref{KM ell_2}, there does
not exist a manifold satisfying the hypotheses of the 
present 
result and with an $(x,y)$-value in $[a,b]\times[c,d]$.

	\item If $[a,b]\times[c,d]$ does intersect the possible subregion, check the muffin-embedding criteria of Proposition \ref{outside Muf_ell1}. These hold in each such case, confirming the expectation recorded above that result.
	\item On $[a,b]\times[c,d]$ intersecting the possible subregion, compute the two volume bounds supplied by Proposition \ref{DblMfn}, recording the larger. (The reason that the sum of muffin volumes $\mathit{VM}(b)+\mathit{VM}^0_2$  may be larger than the bound (\ref{plus collar}), which also incorporates the volume of a collar of a region in $\partial N$, is that in some cases the given lower bound $3-\cosh R(a) - \cosh S(b,c)$ for the area of that region may be negative.)
\end{enumerate}

The Python scripts performing this task are included in the ancillary files as \texttt{VolScript\_E.py}, \texttt{VolScript\_M.py}, and \texttt{VolScript\_x.py}, where the different names refer to the different lower bounds that the scripts call for $\max\{\ell_1,E,M\}$ (each choice being best for a particular range of $x$ values). They call functions from the script \texttt{formulas.py}, which collects relevant formulas from elsewhere in this paper. Their output is summarized in text files, also included in the ancillary materials, such as \texttt{124\_125.txt}: this one recording bounds for possible $(x,y)$-values with $x\in[1.24,1.25]$.

The resulting volume bounds are 
summarized in Figure \ref{volume bounds}. Each big rectangle $[a_0,b_0]\times [c_0,d_0]$ from above is shaded in the Figure, and the smallest of the volume bounds produced by the process above---taken over all subrectangles $[a,b]\times[c,d]$---is recorded in or directly above the big rectangle. (The subrectangle on which the minimum is attained is also blackened in the Figure.) Since each of these bounds is larger than $7.4$, the Theorem follows from Proposition \ref{SglMfn}.
\end{proof}

The next theorem provides a  volume
bound for manifolds satisfying  an extra condition.

\begin{theorem}\label{one two three} Let $N$ be an orientable hyperbolic $3$-manifold with $\partial N$ compact, connected, totally geodesic, and of genus $2$. If the length $\ell_1$ of the shortest return path of $N$ satisfies 
$\cosh\ell_1 \le 1.24$ and $\widetilde{N}$ contains no $(1,1,1)$-hexagon then $N$ has volume at least $7.409$.\end{theorem}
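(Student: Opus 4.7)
The plan is to follow the same numerical scheme used in the proof of Theorem \ref{one two three five}, now restricted to the complementary range of $\ell_1$ and with the additional upper bound on $\ell_2$ supplied by Proposition \ref{DeSh ell_2}. In the absence of a $(1,1,1)$-hexagon, that proposition gives $\cosh\ell_1 > 1.23$, which together with the standing hypothesis $\cosh\ell_1 \le 1.24$ narrows $x = \cosh\ell_1$ to the very short interval $(1.23, 1.24]$. The same proposition bounds $\cosh\ell_2$ above by $(2+2\sqrt{3})\sinh^2\ell_1 - 1$, while Proposition \ref{KM ell_2} bounds it below by $\max\{\cosh\ell_1,\cosh E,\cosh M\}$. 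These two bounds together delimit a thin strip in $(x,y)$-space that we need to cover.

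First I would compute explicit constants $c_0$ and $d_0$ so that any manifold satisfying the hypotheses of the theorem has $(x,y) = (\cosh\ell_1, \cosh\ell_2) \in [1.231, 1.24] \times [c_0, d_0]$; here $c_0$ is obtained by evaluating $\max\{\cosh\ell_1,\cosh E,\cosh M\}$ at $x = 1.24$ (since $E$ and $M$ are monotone by Proposition \ref{KM ell_2} and Lemma \ref{ordering external lengths}) and truncating, and $d_0$ is obtained by evaluating the upper bound of Proposition \ref{DeSh ell_2} at $x = 1.24$. Next I would subdivide $[1.231,1.24]\times[c_0,d_0]$ into subrectangles $[a,b]\times[c,d]$ of side length $0.001$, and on each subrectangle intersecting the ``possible subregion''---defined by Propositions \ref{KM ell_2} and \ref{DeSh ell_2}---verify the embedding criteria $W^0_S\le U^0_2$, $W^0_R + W^0_S \le T^0_{12}$, and $2W^0_S \le T^0_{22}$ of Proposition \ref{outside Muf_ell1}, then record the larger of the two lower bounds on $\mathrm{vol}(N)$ supplied by Proposition \ref{DblMfn} (the bare muffin-sum bound $\mathit{VM}(b) + \mathit{VM}_2^0$ and the collar-augmented bound (\ref{plus collar})).

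These scripts are direct modifications of \texttt{VolScript\_E.py} and the other scripts used in the proof of Theorem \ref{one two three five}, and will produce a text file analogous to \texttt{124\_125.txt}. It remains to check that the minimum over all subrectangles of the resulting bounds is at least $7.409$. Heuristically this is plausible because the region is very narrow: $\cosh\ell_1$ is pinned almost at the minimum value permitted for manifolds without a $(1,1,1)$-hexagon, so $\mathit{VM}(x)$ is near its maximum (by Lemma \ref{Mufell1 volume change}), and the thin corridor for $\ell_2$ forced by Propositions \ref{KM ell_2} and \ref{DeSh ell_2} together keeps $\mathit{VM}_2^0$ substantial; the numerical values summarized in Figure \ref{volume bounds} for the adjacent rectangle at $x\in[1.24,1.25]$ already exceed $7.46$, suggesting comfortable room over $7.409$.

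The main obstacle I anticipate is the verification of the muffin-embedding criteria of Proposition \ref{outside Muf_ell1} near the left endpoint $x = 1.231$, since the cap radius $R$ blows up as $\cosh\ell_1 \to 1$ (by (\ref{R})), and with it the waist radius $W^0_R$; the inequality $W^0_R + W^0_S \le T^0_{12}$ is the tightest of the three and could conceivably fail on some subrectangle at the far left of the interval. If it does fail on some subrectangle, one would need either to refine the grid there or to use a larger lower bound $a$ replacing $1.231$ (justified by monotonicity) and absorb the resulting narrow sliver into a separate argument, perhaps using only the single-muffin bound (\ref{KM vol ineq}) and the fact that the upper bound on $\ell_2$ from Proposition \ref{DeSh ell_2} still forces a substantial collar height $H = \min\{A,\ell_2/2\}$ in that range.
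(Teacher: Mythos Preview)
Your approach is essentially the same as the paper's, but there are two concrete issues.

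First, you pass from $\cosh\ell_1 > 1.23$ (which is all that Proposition \ref{DeSh ell_2} asserts) to the rectangle $[1.231,1.24]\times[c_0,d_0]$ without justification, leaving the sliver $(1.23,1.231)$ unaccounted for. The paper simply takes the $x$-interval to be $[1.23,1.24]$ and lets the ``possible subregion'' check (which now includes the upper bound $c\le Y_0(b)$ from Proposition \ref{DeSh ell_2} as well as the lower bound $d\ge\cosh E(b)$) discard subrectangles where no manifold can live.

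Second, and more seriously, the grid size $0.001\times0.001$ is too coarse to reach $7.409$. The paper explicitly records that this grid yields only $7.399$; to obtain $7.409$ it refines the $x$-direction to $0.0005\times0.001$. The minimum of the computed bounds is attained on the subrectangle $[1.2305,1.231]\times[1.808,1.809]$, right at the left edge of the $x$-interval, so the refinement in $x$ is exactly what is needed. Your anticipated obstacle---failure of the embedding criteria of Proposition \ref{outside Muf_ell1} near $x=1.23$---does not in fact occur; the criteria hold on every relevant subrectangle. The tightness is entirely in the volume estimate itself, and the cure is a finer $x$-grid rather than any new argument.
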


\begin{proof} 
Since $\widetilde{N}$ contains no $(1,1,1)$-hexagon, 
Proposition \ref{DeSh ell_2} asserts that $x \doteq \cosh\ell_1 > 1.23$.
The proof 
will follow 
the same scheme
as that of Theorem \ref{one two three five}, but 
we will use the interval $[1.23,1.24]$ of possible values of $x$, and will
appeal 
to Proposition \ref{DeSh ell_2} again to 
get 
a non-constant upper bound $Y_0(x)$ on $\cosh\ell_2$ in terms of $x$. 
From the formula of Proposition \ref{DeSh ell_2} we obtain $Y_0(x) = (2+2\sqrt{3})x^2 - (3+2\sqrt{3})$. 

We use a single large rectangle $[1.23,1.24]\times [1.74,1.938]$. The lower bound of $y = 1.74$ is the minimum of $\cosh E(x)$ on this interval, which is attained at $x=1.24$, truncated after three decimal places. The upper bound is the maximum of $Y_0(x)$ on the interval, also attained at $1.24$, rounded up after three decimal places. Because $Y_0(x)$ is not constant here but increasing, we alter step (1) above for a subrectangle $[a,b]\times [c,d]$ by checking that both $d \ge \cosh E(b)$ and $c \le Y_0(x)$. 

To achieve a slight improvement in our volume bounds, we use a subrectangle size of $0.0005 \times 0.001$---that is, we divide the previous subrectangles in half vertically. The smallest bound resulting from Theorem \ref{one two three five}'s revised process is $7.409$, truncated after three decimal places, attained on the subrectangle $[1.2305,1.231]\times [1.808,1.809]$, proving the result. (Using the previous subrectangle size would yield $7.399$ as a bound.) The relevant Python script is \texttt{VolScript\_EUl2.py}, and its output is \texttt{123\_124.txt}.
\end{proof}

Theorems \ref{one two three five} and \ref{one two three} together yield an unconditional lower bound on volume in the absence of a $(1,1,1)$-hexagon.

\begin{corollary}\label{larry}\CoreOLarry\end{corollary}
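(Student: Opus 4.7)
The plan is to derive Corollary \ref{larry} as an immediate case-split consequence of the two theorems that precede it. Under the hypothesis that $\widetilde{N}$ contains no $(1,1,1)$-hexagon, Proposition \ref{DeSh ell_2} guarantees $\cosh\ell_1 > 1.23$, so the full range of possible values of $x \doteq \cosh\ell_1$ is covered by the union of the intervals treated in Theorems \ref{one two three five} and \ref{one two three}.

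First I would dispose of the case $x \ge 1.24$. Here the no-$(1,1,1)$-hexagon hypothesis is not needed: Theorem \ref{one two three five} directly yields $\mathrm{vol}(N) \ge 7.4$. Next I would handle the complementary case $x \le 1.24$. Since Proposition \ref{DeSh ell_2} applies (this is where the hypothesis of no $(1,1,1)$-hexagon is used) we have $x > 1.23$, so Theorem \ref{one two three} applies and gives the stronger conclusion $\mathrm{vol}(N) \ge 7.409 > 7.4$.

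Combining the two cases gives $\mathrm{vol}(N) \ge 7.4$ in all situations compatible with the hypothesis, which is the desired inequality. I expect no obstacle here; the real work has been done in establishing the two theorems (and in Proposition \ref{DeSh ell_2}, which pins the possible range of $x$ above $1.23$ in the absence of a $(1,1,1)$-hexagon). The corollary is essentially a bookkeeping observation that the $x$-thresholds in Theorems \ref{one two three five} and \ref{one two three} abut at $1.24$ and together exhaust the admissible range.
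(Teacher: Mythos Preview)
Your proposal is correct and follows the same case split as the paper's proof. One minor redundancy: you need not separately invoke Proposition \ref{DeSh ell_2} to obtain $x>1.23$, since Theorem \ref{one two three} already has ``$\cosh\ell_1\le 1.24$ and no $(1,1,1)$-hexagon'' as its full hypothesis (the bound $x>1.23$ is derived inside its proof, not required as input).
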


\begin{proof} For a manifold $N$ satisfying the
  hypotheses of the corollary, 
if the length $\ell_1$ of the shortest return path of $N$ satisfies $\cosh\ell_1\le 1.24$ then $\mathrm{vol}(N) \ge 7.409$ by Theorem \ref{one two three}. If $\cosh\ell_1\ge 1.24$ then $\mathrm{vol}(N)\ge 7.4$ by Theorem \ref{one two three five}. Thus $\mathrm{vol}(N)$ is at least $7.4$ in both cases.\end{proof}

\section{Trimonic Manifolds from a $(1,1,1)$-hexagon}\label{trimonic}

Here we strengthen some results from Section 6 of \cite{DeSh}. Our first result removes a restriction on the length of $\ell_1$ from the hypothesis of Lemma 6.6 there.

\begin{lemma}\label{short cut vs 111 hex}  Let $N$ be a hyperbolic $3$-manifold with compact totally geodesic boundary. If $C$ is a $(1,1,1)$-hexagon in $\widetilde{N}$ and $\tilde\lambda_1$ is a lift of the shortest return path of $N$, then $\tilde\lambda_1$ is an internal edge of $C$ or $\tilde\lambda_1\cap C = \emptyset$.\end{lemma}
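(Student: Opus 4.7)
The plan is to suppose $\tilde\lambda_1$ is not an internal edge of $C$ and deduce $\tilde\lambda_1 \cap C = \emptyset$. The key geometric fact I would use is that distinct lifts of $\lambda_1$ in $\widetilde{N}$ are pairwise disjoint, which follows from $\lambda_1$ being embedded in $N$ (a standard property of the shortest orthogeodesic, provable by a length-shortening argument at any putative self-intersection). Since the three internal edges of the $(1,1,1)$-hexagon $C$ are lifts of $\lambda_1$, the assumption that $\tilde\lambda_1$ equals none of them means $\tilde\lambda_1$ is disjoint from all three.

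Combining this with the observation that the interior of $\tilde\lambda_1$ lies in the interior of $\widetilde{N}$, while the three external edges of $C$ lie in $\partial\widetilde{N}$, any point of $\tilde\lambda_1 \cap \partial C$ must coincide with an endpoint $P_a$ or $P_b$ of $\tilde\lambda_1$. The convexity of $C$ in its ambient totally geodesic plane $\Pi$ (supplied by Lemma \ref{rt ang hex}) then forces $\tilde\lambda_1 \cap C$ to be connected, so the only non-empty candidates are $\{P_a\}$, $\{P_b\}$, or all of $\tilde\lambda_1$.

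Each non-empty case I would rule out by contradiction. If $\tilde\lambda_1 \subset C$, then $P_a, P_b$ both lie on external edges, so $\{\Pi_a, \Pi_b\} \subset \{\Pi_1, \Pi_2, \Pi_3\}$; by uniqueness of the common perpendicular between two disjoint geodesic planes, $\tilde\lambda_1$ equals the corresponding internal edge, contradiction. If $\tilde\lambda_1 \cap C = \{P_a\}$, then $P_a$ lies on some external edge $e_i \subset \gamma_i = \Pi \cap \Pi_i$, so $\Pi_a = \Pi_i$; since $\Pi \perp \Pi_i$, the perpendicular direction to $\Pi_i$ at $P_a$ lies in $\Pi$, forcing $\tilde\lambda_1 \subset \Pi$. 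If $P_a$ is interior to $e_i$, then $\tilde\lambda_1$ enters the interior of $C$ perpendicular to $\gamma_i$, contradicting the single-point intersection; if $P_a$ is a vertex where an internal edge meets $e_i$, then $\tilde\lambda_1$ starts along that internal edge's direction with the same length, so $\tilde\lambda_1$ equals it, contradiction. The $\{P_b\}$ case is symmetric. The main obstacle is confirming the disjointness of distinct lifts of $\lambda_1$; once that is in hand, the rest of the argument is convex-topological bookkeeping free of metric estimates, which is why no hypothesis on the length of $\ell_1$ is required.
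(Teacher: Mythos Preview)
Your argument has a genuine gap: it misses the case in which $\tilde\lambda_1$ is \emph{not} contained in the plane $\Pi$ carrying $C$. In that case $\tilde\lambda_1$ meets $\Pi$ transversally in a single point $x$, and nothing in your outline prevents $x$ from being an interior point of both $\tilde\lambda_1$ and $C$. Then $\tilde\lambda_1\cap C=\{x\}$ with $x\notin\{P_a,P_b\}$ and $x\notin\partial C$, so your constraint $\tilde\lambda_1\cap\partial C\subset\{P_a,P_b\}$ is vacuously satisfied, and your list of ``only non-empty candidates'' $\{P_a\},\{P_b\},\tilde\lambda_1$ simply omits this possibility. Your appeal to convexity of $C$ is a statement about geodesics \emph{within} $\Pi$ and yields nothing further for a transverse arc beyond the trivial connectedness of a single point.

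This transverse-interior case is precisely the one that requires work, and it is where the paper's proof uses a metric estimate rather than bookkeeping. Any point $x$ of $C$ lies within distance $A$ (the side altitude of the $(1,1,1)$-hexagon) of one of the three boundary components $\Pi'$ carrying the external edges; since $\tilde\lambda_1$ crosses $C$ transversally, that $\Pi'$ cannot be either component containing an endpoint of $\tilde\lambda_1$, so Lemma~\ref{You 'kay?} forces the distance from $\tilde\lambda_1$ to $\Pi'$ to be at least $U_1$. A short computation shows $U_1>A$ for \emph{every} value of $\ell_1$, producing the contradiction. Thus the removal of the length hypothesis from \cite[Lemma~6.6]{DeSh} genuinely rests on the universal inequality $U_1>A$, and your closing claim that the argument is ``free of metric estimates'' does not survive. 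Your treatment of the in-plane case (which amounts to the first paragraph of the proof of \cite[Lemma~6.6]{DeSh}) is fine; it is only the transverse case that is missing.
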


We recall from Definition \ref{internal external} here that the three \textit{internal} edges of a $(1,1,1)$-hexagon are short cuts, \ie lifts of the shortest return path, and its other, \textit{external} edges lie in components of $\partial\widetilde{N}$. This matches the usage in \cite{DeSh}.

\begin{proof} We follow the proof of Lemma 6.6 of \cite{DeSh}: let
  $\Pi_1$ and $\Pi_2$ be the components of $\partial\widetilde{N}$
  containing the endpoints of $\tilde\lambda_1$ and $\Pi$ the geodesic
  plane containing $C$. As 
shown 
in the proof of \cite[L.~6.6]{DeSh}'s first paragraph, if $\tilde\lambda_1$ lies in $\Pi$ then it is an internal edge of $C$. (This argument does not require a condition on the length of $\tilde\lambda_1$.) We therefore suppose below that $\tilde\lambda_1$ does not lie in $\Pi$ and hence intersects it transversely in a point $x$.

As in the previous proof, we take $\Pi'$ to be the component of $\partial\widetilde{N}$ containing the external edge of $C$ closest to $x$. As observed there, the distance from $\Pi'$ to $x$ is at most $A$, defined in equation (3.1.1) of \cite{DeSh} by $\cosh A = \sqrt{\frac{2}{3}(\cosh\ell_1+1)}$ (the same ``$A$'' from Lemma \ref{Mufell1 volume change} here). This thus bounds the distance from $\Pi'$ to $\tilde\lambda_1$ above.

On the other hand, we repeat for emphasis that $\Pi'\ne \Pi_i$ for $i = 1$ or $2$ (this was also used in \cite[L.~6.6]{DeSh}), since $\tilde\lambda_1$ intersects $C$ transversely but intersects $\Pi_1$ and $\Pi_2$ at right angles. Therefore the distance from $\Pi'$ to $\tilde\lambda_1$ is bounded \textit{below} by the quantity $U_1$ from Lemma \ref{You 'kay?} of this paper, which is given strictly in terms of $\cosh\ell_1$ by:
\[ \cosh U_1 = \sqrt{\frac{2\cosh^2\ell_1}{\cosh\ell_1 -1} + 1} = \sqrt{\frac{2\cosh\ell_1-1}{\cosh\ell_1-1}(\cosh\ell_1 + 1)} \]
A little manipulation then shows that $\cosh U_1 > \sqrt{2(\cosh\ell_1+1)}$. It follows that $U_1 > A$, yielding a contradiction. Thus $\tilde\lambda_1$ can only intersect $C$ as one of its internal edges.\end{proof}

Lemma \ref{short cut vs 111 hex} 
in turn facilitates an analogous strengthening of \cite[Lemma 6.7]{DeSh}:

\begin{lemma}\label{two 111s} Let $N$ be a hyperbolic $3$-manifold with compact totally geodesic boundary. If $C$ and $C'$ are distinct $(1,1,1)$-hexagons in $\widetilde{N}$, then $C\cap C'$ is empty or a single internal edge of each.
\end{lemma}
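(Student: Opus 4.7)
The plan is to suppose $C \cap C' \neq \emptyset$ and show this intersection equals a single internal edge shared by $C$ and $C'$. The main tool is Lemma \ref{short cut vs 111 hex}, which implies that each internal edge of $C'$ (a lift of $\lambda_1$) is either an internal edge of $C$ or disjoint from $C$, and symmetrically.

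I would split the analysis based on whether the planes $\Pi$ and $\Pi'$ containing $C$ and $C'$ are distinct. In the case $\Pi \neq \Pi'$, the intersection $C \cap C'$ lies in the geodesic $\gamma = \Pi \cap \Pi'$, so it is a (possibly degenerate) closed segment $\sigma \subset \gamma$. The key geometric observation, used throughout, is that each of $\Pi$ and $\Pi'$ is perpendicular to every boundary component of $\partial \widetilde N$ on which it hosts an external edge; consequently, at any point where $\gamma$ meets a shared such boundary component $\Pi_i$, the geodesic $\gamma$ itself must be perpendicular to $\Pi_i$. This pins down the allowable positions of $\sigma$: if $\sigma$ lay along an external edge of $C$ on some $\Pi_i$, then $\gamma \subset \Pi_i$ would force both $\Pi$ and $\Pi'$ to be the unique plane through $\gamma$ perpendicular to $\Pi_i$, contradicting $\Pi \neq \Pi'$; and if both endpoints of $\sigma$ lie on $\partial \widetilde N$, then $\sigma$ becomes a short cut whose direction at each endpoint matches that of the internal edge of $C$ emanating from that endpoint, and a length comparison using that all internal edges of $C$ have length $\ell_1$ forces $\sigma$ itself to be such an internal edge. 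After assembling the endpoint sub-cases, $\sigma$ lies in an internal edge $e$ of $C$, and Lemma \ref{short cut vs 111 hex} applied to $C'$ forces $e$ to be a common internal edge; since the geodesic $\gamma$ carrying $e$ exits $\widetilde N$ at both of $e$'s endpoints, we get $C \cap \gamma = C' \cap \gamma = e$, and hence $C \cap C' = e$.

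In the coplanar case $\Pi = \Pi'$, the cross-section $\widetilde N \cap \Pi$ is a convex region in $\Pi$ bounded by the geodesics $\Pi_j \cap \Pi$, and both hexagons lie inside it. To rule out $2$-dimensional overlap, I would argue that if $C \cap C'$ had positive area then its boundary in $\Pi$ consists of pieces of edges of $C$ and $C'$; Lemma \ref{short cut vs 111 hex} forces each non-shared internal edge of one hexagon to be disjoint from the other, so the boundary of the overlap lies entirely on external edges, i.e., on $\partial \widetilde N \cap \Pi$. A convex subset of the convex polygon $\widetilde N \cap \Pi$ whose boundary lies entirely on the boundary of that polygon must equal it, forcing $C = C' = \widetilde N \cap \Pi$ and contradicting $C \neq C'$. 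The same circle of ideas excludes a $1$-dimensional overlap along an external edge, since such an overlap would extend into a $2$-dimensional overlap on the $\widetilde N$-side. So any overlap is accounted for by shared internal edges, and sharing two of them would force the triples of boundary components of $C$ and $C'$ to coincide, hence $C = C'$.

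The main obstacle is the endpoint analysis of $\sigma$ in the case $\Pi \neq \Pi'$: organizing the possibilities that each endpoint of $\sigma$ lies on an internal or external edge of $C$ or $C'$, and in each case applying the perpendicularity observation together with Lemma \ref{short cut vs 111 hex} to force $\sigma$ onto a common internal edge, is the most delicate step of the argument.
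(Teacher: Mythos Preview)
Your approach is the same as the paper's: the paper's proof is literally ``the proof of \cite[Lemma 6.7]{DeSh} goes through here, replacing the appeal to \cite[Lemma 6.6]{DeSh} by Lemma~\ref{short cut vs 111 hex}.'' You are reconstructing that argument, and your use of Lemma~\ref{short cut vs 111 hex} as the engine is exactly right. Your non-coplanar case is sound once the endpoint sub-cases are spelled out; in particular, the moment an endpoint of $\sigma$ touches an internal edge $e$ of either hexagon, Lemma~\ref{short cut vs 111 hex} makes $e$ shared, forces $\gamma$ to be the line through $e$, and gives $C\cap C'=e$ directly. The single-point case also falls out: if the lone point lies in the interior of an external edge of both hexagons, your perpendicularity observation makes $\gamma\perp\Pi_i$ there, and then both hexagons extend along $\gamma$ into $\widetilde N$, contradicting $|\sigma|=1$.

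There is one genuine gap in your coplanar case. You assert that the boundary of a $2$-dimensional overlap $R=C\cap C'$ lies entirely on external edges, but a \emph{shared} internal edge $e$ can also contribute to $\partial R$ (Lemma~\ref{short cut vs 111 hex} only rules out \emph{non-shared} internal edges). The fix is short: since sharing two internal edges forces the triples of boundary planes to agree and hence $C=C'$, there is at most one shared internal edge $e$. This $e$ is a chord of the convex region $\widetilde N\cap\Pi$ (its endpoints lie on $\partial\widetilde N$), splitting it into two convex pieces. If $C$ and $C'$ lie on opposite sides of $e$ then $C\cap C'\subset e$ and there is no $2$-dimensional overlap; if they lie on the same side $\widetilde N^+$, then $\partial R\subset \partial(\widetilde N^+)$ and your clopen argument gives $R=\widetilde N^+$, whence $C=C'=\widetilde N^+$, a contradiction. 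With this adjustment your coplanar argument goes through.
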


\begin{proof} The proof of \cite[Lemma 6.7]{DeSh} still 
goes through 
here 
(i.e.~without the 
prior result's 
hypothesis that $\cosh\ell_1\le 1.215$): 
one need only replace the reference to
\cite[Lemma 6.6]{DeSh} in the proof of \cite[Lemma 6.7]{DeSh} by a reference to Lemma \ref{short cut vs 111 hex} above.
\end{proof}

The following result strengthens both Propositions 6.8 and 6.9  of
  \cite{DeSh} by considerably weakening the restriction on $\ell_1$. 
The 
term
\textit{trimonic manifold,} which appears in the following statement, is
defined in   \cite[Definition 5.7]{DeSh}, while the term
\textit{non-degenerate} is defined in  \cite[Definition 5.10]{DeSh}.

\begin{proposition}\label{heeere's trimonny!} Let $N$ be an orientable hyperbolic $3$-manifold with $\partial N$ compact, connected, totally geodesic, and of genus $2$, such that there is a $(1,1,1)$-hexagon in $\widetilde{N}$ and 
\[  \cosh \ell_1 < \frac{\cos (2\pi/9)}{2\cos (2\pi/9) - 1} = 1.43969... \]
Then there is a submanifold $X\subset N$ with $\partial N\subset X$, such that $X$ is a non-degenerate trimonic manifold relative to $\partial N$.\end{proposition}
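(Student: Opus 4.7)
The plan is to follow the construction in the proof of \cite[Proposition 6.8]{DeSh} and the non-degeneracy argument in \cite[Proposition 6.9]{DeSh} essentially verbatim, substituting the strengthened Lemmas \ref{short cut vs 111 hex} and \ref{two 111s} above for their antecedents \cite[Lemmas 6.6 and 6.7]{DeSh}. In the original proofs the hypothesis $\cosh \ell_1 \le 1.215$ entered only through those two lemmas; having just eliminated it there, the remaining hypothesis $\cosh \ell_1 < \cos(2\pi/9)/(2\cos(2\pi/9)-1)$ is precisely what is needed for the disk-packing arguments borrowed from \cite{KM} (visible here through Proposition \ref{KM ell_2} and Lemma \ref{disk radii}) to guarantee that the pieces of the intended $X$ assemble without overlap.

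First I would fix a $(1,1,1)$-hexagon $C_0 \subset \widetilde N$ as supplied by hypothesis and let $\mathcal{H}$ denote the set of all $(1,1,1)$-hexagons in $\widetilde N$. By Lemma \ref{two 111s}, any two distinct members of $\mathcal{H}$ meet in at most a single internal edge; by Lemma \ref{short cut vs 111 hex}, each lift of $\lambda_1$ is either an internal edge of some $C \in \mathcal{H}$ or is disjoint from every member of $\mathcal{H}$. These two facts, together with $\pi_1(N)$-equivariance, show that the image of $\bigcup \mathcal H$ in $N$ is a $2$-complex whose local structure around each edge is that of one or two hexagons meeting along a common internal edge, exactly as in \cite[\S 6]{DeSh}. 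As there, I would build $X$ as the union of $\partial N$, a thickening of this image, and the copies of $\mathrm{Muf}_{\ell_1}$ from Lemma \ref{KM muffin embed} centered on those lifts of $\lambda_1$ that appear as internal edges of hexagons in $\mathcal{H}$.

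Verifying that $X$ is trimonic in the sense of \cite[Definition 5.7]{DeSh} then amounts to checking the local model around each internal edge and the boundary behavior along $\partial N$; both are established in \cite[Proposition 6.8]{DeSh}, with the muffin-hexagon compatibility depending only on Lemmas \ref{short cut vs 111 hex}, \ref{two 111s}, \ref{KM muffin embed} and the disk-packing bound from Lemma \ref{disk radii}. Non-degeneracy in the sense of \cite[Definition 5.10]{DeSh} is exhibited as in \cite[Proposition 6.9]{DeSh} by producing three mutually non-coplanar internal edges in a preimage of $X$, for which the given $(1,1,1)$-hexagon is itself a direct witness; the upgraded Lemma \ref{short cut vs 111 hex} makes this witness work regardless of the precise value of $\cosh \ell_1$.

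The main obstacle is bookkeeping rather than new ideas: one must trace every appeal to $\cosh \ell_1 \le 1.215$ in \cite[\S 6]{DeSh} and confirm that each is either an invocation of one of the upgraded Lemmas above, or a disk-packing application that still runs when one only knows $\cosh \ell_1 < \cos(2\pi/9)/(2\cos(2\pi/9)-1)$. In the latter case the radius-$R$ and radius-$S$ disks of Lemma \ref{disk radii} combined with the Gauss--Bonnet area estimate (as in Proposition \ref{KM ell_2}) still fit, since our weaker hypothesis is precisely the endpoint of the interval on which those packing arguments remain valid; a small amount of care is needed near the upper endpoint, but no genuinely new estimate is required.
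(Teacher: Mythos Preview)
Your high-level strategy---follow the proofs of \cite[Propositions 6.8 and 6.9]{DeSh} and substitute Lemmas \ref{short cut vs 111 hex} and \ref{two 111s} for \cite[Lemmas 6.6 and 6.7]{DeSh}---is exactly what the paper does. But several details of your write-up are off in ways that matter.

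First, the construction of $X$ you describe is not the one in \cite[Proposition 6.8]{DeSh}. There, $X$ is simply a regular neighborhood of the union of $\partial N$ with the image in $N$ of a \emph{single} $(1,1,1)$-hexagon $C$ under the universal covering projection. No union over all hexagons in $\mathcal{H}$, and no muffins are glued in. The muffins and the disk-packing apparatus of Lemma \ref{disk radii} and Proposition \ref{KM ell_2} belong to the volume-estimate side of the paper (Sections \ref{bound muffin}--\ref{bound vol}); they play no role in building the trimonic submanifold.

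Second, and relatedly, you have misidentified where the hypothesis $\cosh\ell_1 < \cos(2\pi/9)/(2\cos(2\pi/9)-1)$ is used. It is not needed for any packing or overlap argument. Its role, via \cite[Lemma 6.3]{DeSh}, is to guarantee that the shortest return path $\lambda_1$ is \emph{unique}, so that the restriction $f$ of the covering projection to $C$ sends every internal edge of $C$ to the same arc $\lambda_1$ in $N$. This is what makes the image $f(C)$ have the correct local structure for Properties (1)--(6) of \cite[Definition 5.7]{DeSh}. Your final paragraph about tracing disk-packing applications near the upper endpoint is therefore chasing a phantom; there is no such bookkeeping to do.

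Finally, the paper notes that the non-degeneracy argument of \cite[Proposition 6.9]{DeSh} carries through without any bound on $\cosh\ell_1$ at all; you need not invoke Lemma \ref{short cut vs 111 hex} a second time there.
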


\begin{proof} The trimonic manifold $X$ is constructed in the proof of
  \cite[Prop.~6.8]{DeSh} as a regular neighborhood of 
  the union of $\partial N$ with
  the 
  image in
  $N$ of a $(1,1,1)$-hexagon $C$ in $\widetilde{N}$ under the
  universal 
covering projection $\widetilde{N}\to N$.
  Following that proof, we
  denote by $f$
the restriction of the covering projection 
to $C$.
The upper bound on $\cosh\ell_1$ ensures that $N$ has a unique
shortest return path $\lambda_1$, by \cite[Lemma 6.3]{DeSh}, and hence
that $f$ projects every internal edge of $C$ to $\lambda_1$.

In order to show that $X$ is a trimonic manifold relative to
$\partial N$, we must verify that it has the properties (1)--(6)
stated in \cite[Dfn.~5.7]{DeSh}. 
Property (1), which in  the proof of
  \cite[Prop.~6.8]{DeSh} was verified by an appeal to Lemma 6.6 of \cite{DeSh}, follows here from Lemma \ref{short cut vs 111 hex}
above. 
Lemma 6.7 of \cite{DeSh}, which in the
earlier argument was used to establish Property (3), 
is replaced here by Lemma \ref{two 111s}. 
The verifications of Properties (2), (4), (5) and (6) go through
exactly as in the earlier argument. Furthermore, 
the argument for non-degeneracy given in Proposition 6.9 of \cite{DeSh} does not depend on any particular upper bound for $\cosh\ell_1$ and so carries through without alteration.
\end{proof}

\section{
Background on topology and least-area surfaces
}\label{bop tack}

\numberwithin{para}{subsection}

The proof of Theorem \ref{bounding main}, which was stated in the
Introduction,  combines the results of the preceding sections with 
concepts and results from three-manifold topology
and geometry 
 which will
also be important
  in later sections.
  We introduce these here. 
Subsection \ref{top back} establishes topological 
conventions, definitions and observations 
that 
will be used
throughout the rest of the paper. 
Subsection \ref{geom back} reviews  
a series of deep differential geometric results on minimal
surfaces in three-manifolds proved by other authors, 
which were used in the proof of \cite[Th.~7.4]{DeSh}, 
and adapts them
to
the present context. 
Their first application here is in the proof of Theorem \ref{7.4 upgrade}, 
which is itself an ingredient of the proof of Theorem \ref{bounding main}.

\subsection{Topological background}\label{top back}~
The material concerning manifolds in this subsection, and
elsewhere in the paper, is 
to be understood in the
smooth category. However, we shall often implicitly exploit the equivalence of
the smooth and piecewise-linear categories in dimension $3$ to go back
and forth between the two categories.

We stipulate, 
as part of the definition of connectedness, 
that a connected topological space is non-empty.

The Euler characteristic of a finitely triangulable space $Y$ will be denoted $\chi(Y)$, and we will set $\chibar(Y)=-\chi(Y)$.

If $A$ is a subset of a topological space $X$, we denote the frontier
of $A$ in $X$, defined to be $\overline{A}\cap\overline{X-A}$, by $\Fr_XA$.

We shall follow the conventions of \cite{Schar-Thomp} regarding
Heegaard splittings. In particular, each compact, connected, orientable
$3$-manifold-with-boundary $M$ has a well-defined Heegaard genus, which 
we denote by $\Hg(M)$.

A connected submanifold $Y$ of a connected manifold $X$ is said to be {\it
  $\pi_1$-injective} in $X$ if the inclusion homomorphism
$\pi_1(Y)\to\pi_1(X)$ is injective. More generally, a (possibly empty) submanifold $Y$ of a  manifold $X$ is said to be {\it
  $\pi_1$-injective} in $X$ if each component $C$ of $Y$ is
  $\pi_1$-injective in the component of $X$ containing $C$.

A $3$-manifold $M$ is said to be {\it irreducible} if 
$M$ is connected and
every (tame) $2$-sphere 
in $M$ bounds a ball. 
One says that 
a two-dimensional submanifold  
$\cals$ of 
an irreducible 
$3$-manifold $M$
is
 {\it incompressible} if (1)
$S$ is 
closed and orientable, and is 
contained in $\inter M$, (2) no
component of $S$ is a $2$-sphere, and (3) $\cals$ is $\pi_1$-injective.
The $3$-manifold $M$ is said to be \textit{boundary-irreducible} if $\partial M$ 
is 
$\pi_1$-injective, and \textit{boundary-reducible} otherwise.

In this paper, we say
that a connected $3$-manifold is {\it strongly atoral} if its fundamental group has no rank-$2$ free abelian subgroup. 
We shall say that a $3$-manifold is {\it \simple} if it
is compact, orientable, irreducible, boundary-irreducible, strongly
atoral,  and 
has an infinite fundamental group. Note that a simple $3$-manifold $M$ is
not homeomorphic to a ball; since $M$ is irreducible, it follows that no boundary
component of $M$ 
can be a $2$-sphere. 

Note also that the definition of ``\simple'' 
given 
here is similar but not
identical to 
the  definition of the same term in \cite{CDS}. It is easy to see that a $3$-manifold is \simple\ in the
sense defined in \cite{CDS} if and only if it is either \simple\ in
the sense defined here or is homeomorphic to a ball.

Note that 
every closed, orientable, hyperbolic $3$-manifold is 
\simple, 
and 
that 
in a
\simple\
$3$-manifold, every 
connected
incompressible surface
has genus at least $2$.

We 
now 
review some definitions from \cite{CDS}. 
If $S$ is a
closed
 surface
 in 
the interior of 
a  $3$-manifold $M$, 
we denote by
 $M\setminus\setminus S$ the manifold with boundary obtained by
 splitting $M$ along $S$: 
 it is the completion of the path metric on 
$Z\doteq M-S$ 
induced by the 
 restriction of a metric on $M$. The inclusion map
 $Z\hookrightarrow M$ 
extends to a map $M\cut S\to M$ 
 that restricts on 
$(M\cut S) - Z$ 
to a two-sheeted covering 
 map to $S$. The restriction is a disconnected cover if and only if 
 $S$ is two-sided in $M$.

If $M$ is \simple, and $S$ is incompressible and non-empty,
then each component  of
 $M\setminus\setminus S$ is \simple\
and has non-empty boundary.

Any 
\simple\ $3$-manifold $U$ with non-empty boundary 
has a well-defined relative characteristic submanifold $\Sigma_U$ in
the sense of \cite{Jo} and \cite{JS}. (In the notation of \cite{JS},
$(\Sigma_U,\Sigma_U\cap\partial U)$ is the characteristic pair of
$(U,\partial U)$. 
The assumptions that $U$ is \simple\ and has non-empty
boundary are enough to guarantee that $(U,\partial U)$ is a
``sufficiently large pair'' in the sense of \cite{JS}, so that the
Characteristic Pair Theorem \cite[p. 118]{JS} guarantees that the
characteristic pair is well defined. 
The arguments of \cite{JS}, and the corresponding arguments in
\cite{Jo}, are done in the piecewise-linear category; as we are
translating the results to the smooth category, we must regard
$\Sigma_U$ as a \textit{smooth manifold with corners} 
---we refer to \cite[Ch.~16]{LeeSmoothMflds} for definitions and basic facts---
such that $\Sigma_U\cap\partial
U$ and $\Fr_U\Sigma_U$ are smooth manifolds with boundary.) 
For each 
component $C$ of $\Sigma_Q$ either (i) $C$ may be given the structure of an
$I$-bundle over a compact 
(smooth) 
$2$-manifold-with-boundary $F_C$ with
$\chi(F_C)<0$, in such a way
that $\Fr_QC$ is the  preimage of
$\partial F_C$ under the bundle projection, or (ii)
$C$ is 
homeomorphic to a 
solid torus 
and the components 
of $\Fr_QC$ are 
(smooth) 
annuli in $\partial C$ that are homotopically non-trivial in
$C$.

Now let $Q$ be a  compact $3$-manifold, each of  whose
components is a \simple\ manifold with non-empty boundary.
We denote 
by $\Sigma_{Q}\subset Q$ the union of the submanifolds $\Sigma_U$,
where $U$ ranges over the components of $Q$. 
Since $\Sigma_Q$ is a manifold with corners whose frontier is a smooth
manifold with boundary, $\overline{Q-\Sigma_Q}$ is a manifold with
corners. 
Each component of 
$\overline{Q-\Sigma_Q}$ 
either has (strictly)
negative Euler characteristic, or may be identified 
(by a diffeomorphism of manifolds with corners) 
with $S^1\times[0,1]\times[0,1]$ in such a way that its frontier in
$Q$ is $S^1\times[0,1]\times\{0,1\}$. 
(To rule out components of $\overline{Q-\Sigma_Q}$  with strictly
positive Euler characteristic, we must show that no boundary component
$W$ of  $\overline{Q-\Sigma_Q}$ is a $2$-sphere. If
$W\cap\Sigma_Q=\emptyset$ 
this 
follows from the observation, made
above, that no boundary component of a \simple\ $3$-manifold $Q$ is a
$2$-sphere. 
If $W\cap\Sigma_Q\ne\emptyset$ then $W$ contains a component of
$\Fr_Q\Sigma_Q$, which is a $\pi_1$-injective annulus in $Q$ and
therefore cannot be contained in a $2$-sphere.) 
We
denote by $\kish(Q)$ the union of all components of
$\overline{Q-\Sigma_Q}$ that have negative Euler
characteristic, and 
set 
$\kish^0(Q) = \kish Q - \Fr_Q\kish Q$.
Thus
$\kish Q$ is a $3$-manifold-with-corners, while $\kish^0Q$ is a smooth
$3$-manifold-with-boundary; and, 
 by definition, for each component $K$ of
$\kish(Q)$, we have $\chi(K)<0$, or equivalently $\chibar(K)\ge1$.

To say that
$Q$ is {\it acylindrical} 
(where 
 $Q$ is still understood to be a compact 
$3$-manifold, each of  whose
components is a \simple\ manifold with non-empty boundary) 
means that $\Sigma_Q=\emptyset$; this is
equivalent to saying that 
$\kish(Q)=Q$.

Note that every 
compact hyperbolic $3$-manifold  
with non-empty totally geodesic boundary is (i) \simple\ and (ii)
acylindrical.

As in \cite{ACS}, we define a {\it book of $I$-bundles}  to
be a triple $\calw=(W,\calB,\calp)$, where (1) $W$ is a compact,
orientable
smooth $3$-manifold with boundary; 
(2) $\calb$ and $\calp$ are 
manifolds with corners such that 
$\calb\cup\calp=W$ and $\calb\cap\calp=\Fr_W\calb=\Fr_W\calp$, and
each of the sets 
$\calb\cap\calp$, 
$\calb\cap\partial W$, and $\calp\cap\partial W$ is a smooth manifold with boundary; 
(3) 
each component 
$B$ of $\calB$ is a solid torus whose frontier
components in $W$ are
(smooth)
annuli in $\partial B$ that are homotopically non-trivial in
$B$; 
and (4) each component $P$ of $\calp$ may be given the structure of an
$I$-bundle over a compact
smooth 
$2$-manifold with boundary $F_P$  in such a way
that $\Fr_QP$ is the  preimage of
$\partial F_P$ under the bundle projection. 
The components of $\calb$
and $\calp$ are called {\it bindings} and {\it pages} of $\calw$,
respectively. The manifold $W$ is called the underlying manifold of
$\calw$ and may be denoted $|\calw|$. We will say that a book of $I$-bundles $\calw$ is
connected if $|\calw|$ is connected.

It follows from the discussion above
  that if  $Q$ is 
a compact
$3$-manifold, each of  whose
components is a \simple\ manifold with non-empty boundary,
and if $\kish(Q)=\emptyset$, then $Q$ is the
underlying manifold of some book of $I$-bundles $\calw$. Indeed, we
may take $\calw=(Q,\calb,\calp)$, where $\calb$ is a regular
neighborhood of the union of all solid torus components of $\Sigma_{Q}$, and
$\calp$ 
is a regular
neighborhood of the union of all components of $\Sigma_{Q}$ having
negative Euler characteristic.


\subsection{
Background on least-area surfaces
}\label{geom back}~

Here we review   
a series of deep differential geometric results on minimal
surfaces in three-manifolds proved by other authors, 
which were used in the proof of \cite[Th.~7.4]{DeSh}. We 
further adapt 
these results here for use in the proof of Theorem \ref{7.4 upgrade}
of the present paper---which is an upgrade of \cite[Th.~7.4]{DeSh}---and 
for 
further applications later in this paper.

\begin{definitionsremarks}\label{refer later}
If $S$ is an oriented smooth manifold and $M$ is a Riemannian manifold, possibly with boundary, 
 with $\dim M\ge\dim S$, any
smooth immersion $f\co S\to M$ 
pulls the Riemannian metric $g$ on $M$ back to a Riemannian metric
$f^* g$ on $S$. For any 
(measurable)
 $A\subset S$, we 
define  the \textit{volume of $A$ under $f$,} denoted by $\vol_f(A)$, to be 
the integral of the volume form of $f^* g$ over $A$.  
This extends
naturally to a 
definition of $\vol_f(A)$ for  any smooth map $f:S\to M$, by taking the volume form to be $0$ at
points where the derivative of $f$ is singular. Thus if $f$ is not an
immersion, we may have $\vol_f(A)=0$ even if (say) $A$ has non-empty interior.

For $\dim S=2$
we will use the term ``area'' in place of ``volume,'' and write
$\area_f(A)$ in place of $\vol_f(A)$ for measurable $A\subset S$. 
We will say that a smooth map $f\co S\to M$ is \textit{least-area} if 
$\area_f(S)\le \area_g(S)$ for any map $g\co S\to M$ smoothly homotopic to $f$.

There is an equivalent definition of the volume of $A\subset S$ under
the smooth map $f$ in the special case where $\dim M=\dim S$ and where  $M$,
as well as $S$, is equipped with an orientation. In this situation, the
metric on $M$ defines a volume form $\alpha$, and the pulled back form
$f^*(\alpha)$ may be written as $h\cdot\omega$, where $\omega$ is a
non-vanishing form that determines the given orientation of $S$. We
then have $\vol_f(A)=\int_A|h|\omega$. 
In
particular, since 
$\left\vert\int_A|h|\omega\right\vert\ge |\int_Ah\omega|$,
it follows that  $\vol_f(A)\ge|\int_A f^*(\alpha)|$.

There is a still more general notion of volume 
of submanifolds 
from geometric measure theory---the Hausdorff measure---that applies even to non-smooth maps 
and gives the same result as the definition above for smooth embeddings. 
\end{definitionsremarks}

\begin{notationremarks}\label{after refer later}
By definition, a riemannian metric on a manifold $M$ gives an inner
product, and hence a norm, on the tangent space at any point of $M$. If $\kappa :M_1\to M_2$ is a
smooth map between riemannian manifolds, then for every $\bx\in M_1$ the
derivative $d\kappa _{\bx}:T_{\bx}M_1\to T_{\kappa (\bx)}M_2$ is a
linear map, and has an operator norm
$\|d\kappa _{\bx}\|=\max_{v\in T_{\bx}M_1,\|x\|=1}\|d\kappa _x(v)\|$.

Now suppose that $\|d\kappa _{\bx}\|\le1$ for every $\bx\in M_1$. Then
for any oriented smooth manifold $S$, any smooth map $f:S\to M_1$, and any point
$\bu\in S$ such that $df_{\bu}$ is non-singular, the norms defined on
$T_{\bu}$ by the pullbacks via $f$
and $\kappa \circ f$ of the metrics $g_1$ and $g_2$ on $M_1$ and $M_2$
satisfy $\|(\kappa\circ f)^*(g_2)\|\le\|f^*(g_2)\|$. This implies in
particular that if $\beta_1$ and $\beta_2$ denote the volume forms of
the respective
pullbacks, and if for $i=1,2$  we choose an $n$-form  $\omega$ 
defining the orientation of $S$ and
write $\beta_i=h_i\omega$ for some
function $h_i$ which is positive wherever $df_{\bu}$ is non-singular,
then $h_2\le h_1$. If as in \ref{refer later} we extend the $\beta_i$
to all of $S$ by defining them to be $0$ at points where $df_{\bu}$ is
singular, and if we define the $h_i$  to be $0$ at such points as
well, then the inequality $h_2\le h_1$ holds on all of $S$; upon
integrating we conclude that for any (measurable) set $A\subset S$ we
have $\vol_{\kappa\circ f}(A)\le\vol_f(A)$, in the notation of
\ref{refer later}.

If, in addition to the
assumption that $\|d\kappa _{\bx}\|\le1$ for every $\bx\in M_1$, we
assume that $\|d\kappa _{\bx}\|<1$ for some $\bx\in f(A)$, the same
argument shows that $\vol_{\kappa\circ f}(A)<\vol_f(A)$.
\end{notationremarks}

\begin{proposition}\label{thog proj} 
Let $\Pi$ be any  totally geodesic plane in $\HH^3$ and let
$\pi:\HH^3\to\Pi$ denote  orthogonal projection. Then if
$\bx$ is any point of $\HH^3$, and $D$ denotes the hyperbolic distance from $\bx$ to
$\Pi$, we have
$\|d\pi_{\bx}\|=1/\cosh D$, where $d\pi_{\bx}:T_{\bx}\HH^3\to T_{\pi{\bx}}\Pi$
is the derivative map, and
the operator norm $\|d\pi_{\bx}\|$ is defined as in \ref{after refer later}. 
In particular we have $\|d\pi_{\bx}\|\le1$ for every $\bx\in\HH^3$,
and $\|d\pi_{\bx}\|<1$ for every $\bx\in\HH^3-\Pi$. 
\end{proposition}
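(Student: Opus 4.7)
The plan is to compute $\|d\pi_\bx\|$ by working in Fermi coordinates based on $\Pi$. By the Hadamard--Hermann theorem, which is already quoted in the paper (in the proof of Proposition \ref{embark}), the exponential map restricts to a diffeomorphism from the normal bundle $\nu(\Pi)$ onto $\HH^3$; once an orientation of $\nu(\Pi)$ has been fixed, this identifies $\HH^3$ with $\Pi\times\mathbb{R}$ via $(p,t)\mapsto\exp_p(t\nu_p)$, where $\nu_p$ is the unit normal vector to $\Pi$ at $p$ picked out by the orientation. Under this identification $\pi$ becomes projection to the first factor and $|t|$ equals the distance from $(p,t)$ to $\Pi$. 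The key classical input is that the hyperbolic metric pulls back to the warped product
\[ ds^2 = \cosh^2(t)\,g_\Pi + dt^2, \]
where $g_\Pi$ is the intrinsic hyperbolic metric on $\Pi$. This is a standard fact, derivable from the observation that the equidistant surface at signed distance $t$ from $\Pi$ is intrinsically a rescaling of $\Pi$ by the factor $\cosh t$.

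Given the warped-product formula, the computation of the operator norm is immediate. A tangent vector at $\bx=(p,t)$ decomposes uniquely as $(v,s)\in T_p\Pi\oplus\mathbb{R}$; its ambient norm is $\sqrt{\cosh^2(t)\,|v|_\Pi^2+s^2}$, and its image under $d\pi_\bx$ is $v\in T_p\Pi$, whose norm in the metric on $\Pi$ is $|v|_\Pi$. To maximize $|v|_\Pi$ over ambient unit vectors we impose the constraint $\cosh^2(t)\,|v|_\Pi^2+s^2=1$, which forces $|v|_\Pi\le 1/\cosh t$, with equality when $s=0$ and $|v|_\Pi=1/\cosh t$. Since $D=|t|$ and $\cosh$ is even, this yields $\|d\pi_\bx\|=1/\cosh D$.

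The two consequences stated in the proposition then follow at once, since $\cosh D\ge 1$ with equality exactly when $D=0$, i.e., exactly when $\bx\in\Pi$. The only substantive step is the warped-product form of the metric; rather than reprove it from scratch I would cite a standard reference (such as the treatment of equidistant hypersurfaces in Ratcliffe's book, already cited elsewhere in the paper), or alternatively verify it by direct computation in the upper half-space model, where $\Pi$ may be taken to be a vertical half-plane and the formula then reduces to a short hyperbolic-trigonometric identity.
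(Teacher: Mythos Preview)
Your argument is correct, and it takes a genuinely different route from the paper's own proof. The paper works entirely in the upper half-space model: it fixes $\Pi$ to be the unit hemisphere centered at the origin, writes down an explicit formula for the orthogonal projection $\pi$ in ambient coordinates, uses the transitivity of the isometry group to reduce to a point $\bx=(0,0,z)$ with $z=e^D$, and then computes the Jacobian matrix of $\pi$ at that point directly. The result is the matrix $\frac{1}{\cosh D}\begin{pmatrix}1&0&0\\0&1&0\end{pmatrix}$ with respect to suitable orthonormal bases, from which the operator norm is read off.

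Your approach via Fermi coordinates and the warped-product decomposition $ds^2=\cosh^2(t)\,g_\Pi+dt^2$ is more conceptual and arguably cleaner: once one accepts the warped-product formula, the projection is literally the first-factor projection and the operator norm drops out with no computation. It also makes the dimension-independence of the result transparent. The trade-off is that the warped-product formula itself is the substantive content, and you defer its verification to a citation or a side computation; the paper's approach is more self-contained, at the price of an explicit but somewhat opaque coordinate calculation. Either route is perfectly adequate here.
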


\begin{proof} Because the isometries of $\mathbb{H}^3$ act
  transitively on its collection of totally geodesic planes and
  conjugate orthogonal projections to orthogonal projections, we may
  fix a particular plane $\Pi$ on which to 
establish the assertions of the proposition. 
Using the upper half-space model for $\mathbb{H}^3$, we choose $\Pi$ to be the unit hemisphere centered at $\mathbf{0}$. The orthogonal projection $\pi$ to this plane is given 
in terms of the ambient coordinates on $\mathbb{R}^3$
by
\[ \pi(x,y,z) =
  \frac{1}{1+x^2+y^2+z^2}\,\left(2x,2y,\sqrt{(1+x^2+y^2+z^2)^2-4(x^2+y^2)}\right) \]

Let us prove the first assertion of the proposition, that for any
$\bx\in\HH^3$ we have $\|d\pi_{\bx}\|=1/\cosh\redD$, where $\redD$
denotes the distance from $\bx$ to $\Pi$.

Because the stabilizer of $\Pi$ in $\mathbb{H}^3$ acts transitively on
the  points of $\Pi$, we may assume without loss of generality that
$\pi(\bx)=(0,0,1)$; after possibly modifying $\bx$ by a reflection
about $\Pi$ we may further assume that $\bx=(0,0,z)$, where $z=e^{\redD}$.

Denoting the standard basis vectors for $\mathbb{R}^3$ as $\be_1$, 
$\be_2$, $\be_3$, we have that $\{z\be_1,z\be_2,z\be_3\}$  
and $\{\be_1,\be_2\}$ are orthonormal bases for $T_{\bx}\mathbb{H}^3$,  
and $T_{\pi\bx}\Pi$, respectively. In terms of these bases, 
the derivative $d\pi_{\bx}$ of $p$ at
$\bx$
is given by
\Equation\label{the matrix}
d\pi_{\bx} = \frac{1}{\cosh \redD }\begin{pmatrix} 1& 0 & 0 \\ 0 & 1 &
    0 
    \end{pmatrix} 
.
\EndEquation

The matrix in (\ref{the matrix}) 
can be 
obtained 
by computing 
the usual Jacobian matrix at 
$\bx = (0,0,z)$, of partial derivatives of the components of $\pi$, 
applying it to the basis vectors for $T_{\bx}\mathbb{H}^3$ given above, 
expressing their images in terms of the basis vectors for $T_{\pi\bx}\Pi$, 
then substituting $e^{\redD} $ for 
$z$ and simplifying.

The expression (\ref{the matrix}) for $d\pi_{\bx}$ immediately implies 
the first assertion of the proposition, that
$\|d\pi_{\bx}\|=1/\cosh\redD$. 
\end{proof}

\begin{corollary}\label{more o'larry} The inclusion map $S\hookrightarrow M$ of a totally geodesic surface $S$ in a closed hyperbolic $3$-manifold $M$ is least-area, in the sense of \ref{refer later}. 
\end{corollary}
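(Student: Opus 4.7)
The plan is to exploit the fact that orthogonal projection onto a totally geodesic plane is distance-nonincreasing (Proposition \ref{thog proj}), together with a degree argument to bound the area of any map homotopic to the identity on $S$ from below by the area of $S$.

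First, I would work in the universal cover. Fix an orientation of $S$ and let $f\co S\to M$ be any smooth map homotopic to the inclusion $i\co S\hookrightarrow M$. Since $M$ has universal cover $\mathbb{H}^3$, and $S$ is totally geodesic, there is a component $\Pi\subset\mathbb{H}^3$ of the preimage of $S$ which is a totally geodesic plane and may be identified with the universal cover $\widetilde S$ of $S$; under this identification, the lifted inclusion $\widetilde i\co \widetilde S\to\mathbb{H}^3$ is the inclusion $\Pi\hookrightarrow\mathbb{H}^3$, and the subgroup $G=\pi_1(S)\le\pi_1(M)$ acts on $\Pi$ as the deck group of $\widetilde S\to S$. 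Choose the lift $\widetilde f\co\widetilde S\to\mathbb{H}^3$ of $f$ that is equivariantly homotopic to $\widetilde i$.

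Next, let $\pi\co\mathbb{H}^3\to\Pi$ denote orthogonal projection. Since every element of $G$ preserves $\Pi$ and acts by an isometry of $\mathbb{H}^3$, the projection $\pi$ is $G$-equivariant. Hence $\pi\circ\widetilde f\co\widetilde S\to\Pi$ is $G$-equivariant and descends to a smooth map $g\co S\to S$. The equivariant homotopy from $\widetilde f$ to $\widetilde i$ composed with $\pi$ gives an equivariant homotopy from $\pi\circ\widetilde f$ to $\pi\circ\widetilde i=\mathrm{id}_\Pi$, so $g$ is homotopic to $\mathrm{id}_S$ and in particular has degree $1$.

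The area comparisons now fall into place. Fixing a fundamental domain $D\subset\widetilde S$ for $G$, we have $\area_f(S)=\area_{\widetilde f}(D)$ and $\area_g(S)=\area_{\pi\circ\widetilde f}(D)$. By Proposition \ref{thog proj}, $\|d\pi_{\bx}\|\le1$ everywhere, so the discussion in \ref{after refer later} yields
\[
\area_{\pi\circ\widetilde f}(D)\le\area_{\widetilde f}(D).
\]
On the other hand, if $\omega$ denotes the area form on $S$ coming from the hyperbolic metric, then by the remark in \ref{refer later} and the fact that $\deg g=1$,
\[
\area_g(S)\ge\left|\int_S g^*\omega\right|=|\deg g|\cdot\area(S)=\area(S).
\]
Stringing the inequalities together gives $\area_f(S)\ge\area(S)=\area_i(S)$, which is what we wanted.

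The only real thing to watch is the setup of the equivariant lift and the descent of $\pi\circ\widetilde f$ to a self-map of $S$; this is a standard covering-space bookkeeping issue, and the potential pitfall is the need for $S$ to be orientable (which is forced by the statement, since ``least-area'' in \ref{refer later} requires an orientation). Everything else is an immediate application of the two already-quoted ingredients, Proposition \ref{thog proj} and the monotonicity principle of \ref{after refer later}.
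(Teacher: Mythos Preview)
Your proof is correct and follows essentially the same approach as the paper's: both compose a lift of $f$ with orthogonal projection to the plane $\Pi$ covering $S$, use Proposition \ref{thog proj} and \ref{after refer later} to get $\area_g(S)\le\area_f(S)$ for the resulting self-map $g$ of $S$, and then use that $g$ is homotopic to the identity together with the observation in \ref{refer later} to get $\area_g(S)\ge\area(S)$. The only cosmetic difference is that the paper passes to the intermediate cover $\tM=\HH^3/\pi_1(S)$ and works with the induced projection $\pi:\tM\to\tS$ there, whereas you work $G$-equivariantly in $\HH^3$ and use a fundamental domain; these are equivalent bookkeeping choices.
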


\begin{proof} 
Since
$S$ is least-area if each of its components is, 
we may assume that $S$ is connected. 

Let $i:S\to M$ denote
the inclusion map, and let
$p\co\widetilde{M}\to M$
denote the  covering space determined by 
$i_\sharp(\pi_1 (S))$. 
Then $i$
admits a lift
$\ti:S\to \tM$; set $\tS=\ti(S)$. 
If we write $\tM=H^3/\Gamma_0$, where $\Gamma_0$ is a discrete,
torsion-free group of isometries of $\HH^3$, and let $q:\HH^3\to\tM$
denote the quotient map, then $\Gamma_0$ leaves the plane $\Pi\doteq
q^{-1}(\tS)$ invariant, 
preserves some
component of $p^{-1}(S)$, which is a plane
$\Pi$, 
and therefore commutes with 
the orthogonal projection from $\HH^3$ to $\Pi$. 
Hence 
this projection 
induces a projection $\pi\co\widetilde{M}\to\widetilde{S}$. 
It follows from  Lemma \ref{thog proj} that
$\|d\pi_{\bx}\|\le1$ for every $\bx\in\tM$.

If $f:S\to M$ is a map homotopic to $i$, and we choose a smooth
homotopy $F\co S\times I\to M$  from $i$ to $f$, then $F$ admits a
lift $\tF:S\to\tM$. Now 
$H \doteq p\circ\pi\circ \tF$ is a homotopy from the identity map of $S$ to
$j\doteq p\circ\pi\circ\tf:S\to S$, where $\tf$ is a lift of $f$ to
$\tM$. Since $\|d\pi_{\bx}\|\le1$ for every $\bx\in\tM$, it follows from
\ref{after refer later} that $\area_{\pi\circ \tf}(S)\le\area_{\tf}(S)$.
Since $p$ is a local isometry, this inequality may be rewritten as 
$\area_j(S)\le \area_f(S)$.

Now fix an orientation of $S$. By an observation made in \ref{refer later}
 we have
$\area_j(S)\ge|\int_S j^*(\alpha)|$, where $\alpha$ is the area form
determined by the orientation of $S$. But it is a standard
consequence of Stokes's Theorem that the pull-backs of an $n$-form under
homotopic maps between closed $n$-manifolds have the
same integral. Since $j:S\to S$ is homotopic to the identity, it
follows that  $\int_S j^*(\alpha)=\int_S {\rm id}_S^*(\alpha)=\area(S)\ge0$, 
and hence that $\area_j(S)\ge\area(S)$. 
Noting that since $S$ is totally geodesic, $\area_i(S) = \area(S)$, we conclude
 that $\area_f(S)\ge\area_j(S)\ge\area_{i}(S)$.
\end{proof}

The lemma below is a variation on a standard consequence of deep results 
of Thurston \cite{Th1}, Agol-Storm-Thurston \cite{ASTD}, and Miyamoto \cite{Miy}; compare it with 
Theorems 7.2 and 9.1 of \cite{ASTD}.

\begin{lemma}\label{ASTD plus epsilon} 
Let $N$ be a  compact $3$-manifold, each of  whose components is
\simple\ and has non-empty boundary. Suppose that $N$ is equipped with a hyperbolic metric 
such that $\partial N$ is a 
minimal surface. Then
we have 
$\vol N>\voct\,\chibar(\kish N)$.
\end{lemma}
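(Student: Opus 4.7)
The plan is to exploit the duality between the given minimal-boundary hyperbolic structure on $N$ and an auxiliary hyperbolic structure with totally geodesic boundary (and possibly rank-one cusps) on the kishkes $\kish(N)$.

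First, I would invoke Thurston's geometrization theorem for Haken manifolds to equip the interior of each component of $\kish(N)$ with a complete finite-volume hyperbolic structure: the pieces of $\partial\kish(N)$ inherited from $\partial N$ become totally geodesic, while the annular components of $\Fr_N\kish(N)$ (the interface with $\Sigma_N$) become rank-one cusps. Let $K$ denote the resulting (possibly cusped) hyperbolic $3$-manifold; its homeomorphism type is that of $\kish(N)$ with the frontier annuli collapsed. Miyamoto's density theorem from \cite{Miy} then yields $\vol(K) \ge \voct\,\chibar(K) = \voct\,\chibar(\kish(N))$.

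Next, I would establish the volume comparison $\vol(N) \ge \vol(K)$ via a least-area argument in the spirit of \cite[Theorem~7.2]{ASTD}. The minimality of $\partial N$ is precisely the geometric input one needs in place of total geodesicity: by constructing a suitable ``straightening'' map $\phi\co K \to N$ that sends geodesic boundary to minimal boundary and cusp neighborhoods to neighborhoods of the frontier annuli of $\kish(N)$ in $N$, one verifies that the pointwise Jacobian of $\phi$ is at most $1$ --- the standard curvature estimate underlying the ASTD volume comparison, whose validity depends only on the mean curvature vanishing along $\partial N$. Combining this with the previous paragraph gives $\vol(N) \ge \voct\,\chibar(\kish(N))$.

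It remains to promote this to strict inequality. Equality in Miyamoto's bound would force $K$ to be a very particular union of regular ideal hyperbolic octahedra, which in particular requires $K$ to be non-compact; equality in the ASTD-style comparison would force $\partial N$ to be totally geodesic in $N$ and $\Sigma_N = \emptyset$, so that $\phi$ is an isometry onto $N = \kish(N)$. The joint equality case --- $N = \kish(N)$ compact, with totally geodesic boundary, and saturating Miyamoto's bound --- is then excluded, because Miyamoto's extremals are necessarily cusped. The main obstacle I expect is making the volume comparison rigorous with strict inequality when $\partial N$ is minimal but not totally geodesic: this should follow from analyzing the contribution of the second fundamental form of $\partial N$ to the Jacobian bound, using the observation that a minimal surface with non-vanishing second fundamental form forces the Jacobian of $\phi$ to be strictly less than $1$ on a set of positive measure near $\partial N$.
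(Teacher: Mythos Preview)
Your overall architecture---a Miyamoto-type lower bound on the hyperbolic volume of $\kish(N)$, combined with a volume comparison back to $N$, plus an analysis of the equality case---matches the paper's strategy. The equality-case analysis you give is essentially the same as the paper's. But the middle step, your volume comparison $\vol(N)\ge\vol(K)$, has a real gap.

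You propose a ``straightening'' map $\phi\colon K\to N$ with pointwise Jacobian at most $1$. Taken literally, this gives the inequality in the wrong direction: if $\phi$ has degree one onto its image and $|\mathrm{Jac}\,\phi|\le1$, then $\vol(\phi(K))\le\vol(K)$, not $\ge$. Moreover, topologically $K$ is (a cusped version of) the \emph{sub}manifold $\kish(N)$ of $N$, so any such $\phi$ would miss the characteristic submanifold $\Sigma_N$ entirely; you have no mechanism for accounting for the volume of $N\setminus\kish(N)$. The appeal to ``the standard curvature estimate underlying the ASTD volume comparison'' does not apply here: the ASTD machinery is not a straightening map from one hyperbolic structure to another, but rather a Ricci-flow argument on the doubled manifold with a singular initial metric.

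The paper instead routes the comparison through the double $DN$ and the Gromov norm. By \cite[Theorem~7.2]{ASTD} (which uses the minimality of $\partial N$), one has $\vol N\ge\tfrac12 V_3\|DN\|$. Since $D\kish^0 N$ sits inside $DN$ with incompressible torus frontier, Thurston's theorem \cite[Theorem~6.5.5]{Th1} gives $V_3\|DN\|\ge\vol(D\kish^0 N)$; this is precisely where the volume of the Seifert-fibered pieces of $DN\setminus D\kish^0 N$ is legitimately discarded (they contribute nothing to simplicial volume). Then Miyamoto's bound on $\vol(D\kish^0 N)$ finishes the chain. If you want to salvage your approach without the Gromov norm, you would need a direct comparison theorem between the minimal-boundary metric on $N$ and the geodesic-boundary-with-cusps metric on $\kish^0 N$; no such result is cited or proved, and constructing one would essentially require redoing the ASTD analysis.
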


In the 
proof  of Lemma \ref{ASTD plus epsilon}
and below it,
the \textit{double} of a manifold $N$ with boundary is the manifold $\mathit{DN}$ obtained from $N\sqcup\overline{N}$, where $\overline{N}$ is a second copy of $N$, by identifying $\partial N\to\partial\overline{N}$ via the identity map. The following facts are standard and will be taken for granted. First, if $N$ is oriented then so is $\mathit{DN}$, by equipping $\overline{N}$ with the opposite orientation from that of $N$. Second, a hyperbolic structure with totally geodesic boundary on $N$ embeds isometrically into a boundaryless hyperbolic structure on $DN$ in which $\partial N$ hence sits as totally geodesic surface separating $N$ from $\overline{N}$.

\begin{proof}[Proof of Lemma \ref{ASTD plus epsilon}]
Theorem 7.2 of \cite{ASTD} asserts that
$\vol N \ge \frac{1}{2}V_3\|\mathit{DN}\|$, 
where $V_3$ is the volume of a regular ideal tetrahedron 
and 
$\|\cdot\|$ denotes the Gromov norm.

As used in the proof of \cite[Th.~9.1]{ASTD}, $\kish^0 N$ admits a hyperbolic 
structure with totally geodesic boundary, which
 by Miyamoto's universal lower bound on the volumes of manifolds with totally geodesic boundary \cite[Th.~4.2]{Miy} 
 has volume at least $\voct\,\chibar(\kish N)$. Its double  
$D\kish^0 N$ therefore has a finite-volume hyperbolic structure with volume 
at least twice this. And $D\kish^0 N$ is the interior of the submanifold 
$D\kish N$ of $\mathit{DN}$ whose frontier is a 
disjoint union of incompressible tori. It thus follows that 
$V_3\|\mathit{DN}\| \ge \vol (D\kish^0N)$
from Theorem 6.5.5 of Thurston's notes \cite{Th1}. Combining these inequalities, 
we obtain
\[ \vol N \ge \frac{1}{2}V_3\|\mathit{DN}\| 
	\ge \frac{1}{2}\vol (\textstyle{D\kish^0}N) \ge\voct\,\chibar(\kish N) \]
Note that the present result's conclusion asserts a strict inequality. If $\partial N$ is not totally geodesic 
 then by \cite[Th.~7.2]{ASTD} the leftmost inequality above is strict, and our desired 
conclusion holds. But if $\partial N$ is totally geodesic, then we claim that the rightmost inequality above 
is strict and again the desired conclusion holds; so in fact it holds unconditionally.

To prove the claim we note that since  $\partial N$ is totally
geodesic, the manifold $N$ is acylindrical. 
Hence, as pointed out in 
Subsection \ref{top back}, we have $\kish N = N$; we therefore have $\kish^0 N = N$.
Theorem 4.2 of \cite{Miy},  applied to $N$, 
 asserts 
for the rightmost inequality above 
that ``equality holds only if $N$ is 
decomposed into $T^n(0)$'s'', where in this case (for $n=3$), the truncated regular simplex $T^3(0)$ 
of edgelength $0$ is a regular ideal octahedron (compare \cite[Example 5.1]{Miy}). 
But any complete manifold that decomposes into copies of $T^3(0)$ has cusps, 
and the present $N$ is compact by hypothesis. Therefore the 
volume inequality is strict as claimed.
\end{proof}

The following 
result, Proposition \ref{from FHS}, will be seen to be a
direct consequence of deep results of Freedman--Hass--Scott \cite{FHS} 
and Schoen--Yau \cite{SchoenYau}. This result,
and its variant Proposition \ref{FHS plus epsilon}, will facilitate 
the 
applications 
of Lemma \ref{ASTD plus epsilon}.

\begin{proposition}\label{from FHS} Let $S$ be a connected incompressible surface 
in a closed, orientable hyperbolic $3$-manifold $M$. The inclusion map $S\hookrightarrow M$ 
is homotopic to a least-area immersion $f\co S\to M$ that is either (i) an embedding, or 
(ii) a two-sheeted covering map to a one-sided surface $K$ isotopic to the core of 
a twisted $I$-bundle in $M$ bounded by $S$.
\end{proposition}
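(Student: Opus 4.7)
The plan is to combine an existence theorem for least-area maps of incompressible surfaces into $3$-manifolds (due to Schoen--Yau, with refinements by Sacks--Uhlenbeck) with the structural classification theorem of Freedman--Hass--Scott describing when such a map fails to be an embedding, and then to upgrade the resulting homotopy statement to an isotopy statement using Waldhausen's classification of incompressible surfaces in irreducible $3$-manifolds.

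Concretely, first I would invoke Schoen--Yau to produce, in the homotopy class of the inclusion $S \hookrightarrow M$, a smooth least-area map $f\co S\to M$. Because $S$ has genus at least $2$ (note that $M$ being hyperbolic is \simple, so every incompressible surface has genus at least $2$, as recalled in Subsection \ref{top back}) and $S$ is $\pi_1$-injective in $M$, the existence of such a minimizer and its smoothness follow from standard variational arguments; moreover, since $M$ has strictly negative sectional curvature, the minimizer is necessarily an immersion (it cannot have branch points because a branched least-area disk would violate the negative-curvature maximum principle).

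Next I would apply the main theorem of Freedman--Hass--Scott \cite{FHS}, which asserts that any least-area immersion of a closed $\pi_1$-injective surface into an orientable $3$-manifold is either an embedding or factors as a two-sheeted covering map $S\to K$ followed by an embedding of a one-sided surface $K\hookrightarrow M$. In the first case we are done with $f$ equal to the minimizer and case (i) of the proposition. In the second case, let $B$ denote a regular neighborhood of $K$ in $M$; since $K$ is one-sided, $B$ is a twisted $I$-bundle over $K$ with core $K$, and its frontier $\partial B$ is a connected two-sided surface that double-covers $K$ via the bundle projection.

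Finally, to match the precise wording of (ii), I would observe that the inclusion $\partial B \hookrightarrow M$ is freely homotopic to $f$ (both factor, up to homotopy, through the two-sheeted covering onto $K$), hence to the original inclusion $S\hookrightarrow M$. Since $M$ is irreducible and both $S$ and $\partial B$ are two-sided incompressible surfaces in $M$, Waldhausen's homotopy-implies-isotopy theorem for incompressible surfaces in irreducible $3$-manifolds upgrades this homotopy to an ambient isotopy carrying $S$ onto $\partial B$. After applying this isotopy, $S$ bounds the twisted $I$-bundle $B$ and $K$ is its core, as required. I do not anticipate a serious obstacle; the main point is simply assembling the correct existence/regularity result, the Freedman--Hass--Scott dichotomy, and Waldhausen's isotopy theorem, and checking that the two-sided surface produced in the non-embedded case is indeed (isotopic to) the boundary of an $I$-bundle neighborhood of $K$.
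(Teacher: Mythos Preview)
Your proposal is correct and follows essentially the same approach as the paper: cite Schoen--Yau for existence of the least-area immersion, then Freedman--Hass--Scott for the dichotomy. The only difference is that the paper attributes the full alternative (ii), including the isotopy statement about the twisted $I$-bundle bounded by $S$, directly to \cite[Theorem 5.1]{FHS}, whereas you unpack that conclusion and invoke Waldhausen separately to pass from homotopy to isotopy; this extra step is harmless (and indeed the paper itself carries out exactly that Waldhausen argument when proving the bounded analogue, Proposition \ref{FHS plus epsilon}).
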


\begin{proof} Note that $M$, being hyperbolic, is 
$\mathbb{P}^2$-irreducible and
  aspherical. 
  (These are topological hypotheses of the results of \cite{SchoenYau} and \cite{FHS}.)
 Since $S$ is incompressible, the inclusion map $S\hookrightarrow M$ is homotopic 
in $M$ to a smooth least-area immersion $f$, by the main 
result of \cite{SchoenYau}. Theorem 5.1 of
\cite{FHS}
(which implicitly assumes connectedness of the domain $S$) 
then asserts 
that $f$ satisfies one of the alternatives (i), (ii).
\end{proof}

In Section 7 of \cite{FHS}, generalizations of the results of the kind
that we
have summarized in Proposition \ref{from FHS} are considered. The
first four paragraphs of that section outline a proof that Proposition
\ref{from FHS} remains true if the hypothesis that $M$ is closed is
weakened. In particular, the proposition appears to remain true if $M$ is a
compact hyperbolic $3$-manifold with smooth boundary, and the mean
curvature of $\partial M$ with respect to the inward normal is
everywhere non-negative. The arguments that are indicated depend on methods developed in
\cite{MeeksYauBoundary}. Joel Hass has explained to us how these
methods can be adapted to this purpose; the details appear to be rather involved. In this paper we
need only the very special case in which $\partial M$ is connected and
totally geodesic, and we prefer to provide a complete proof of this
special result, stated as Proposition \ref{FHS plus epsilon} below, that quotes only results that are proved in detail in
the literature.

\newcommand\redS{S}

\begin{proposition}\label{FHS plus epsilon} Let $\redS$ be a connected,
incompressible 
(closed)
 surface 
in 
(the interior of)
a compact, orientable hyperbolic $3$-manifold $N$ with connected totally geodesic 
boundary. 
Then one of the following alternatives holds: 
\begin{itemize}
\item[(i-a)] $\redS$ and $\partial N$ 
cobound a submanifold of
$N$ which is a trivial $I$-bundle over a closed, connected surface;
\item[(i-b)]
the inclusion map $\redS\hookrightarrow N$ is homotopic in $N$ to a least-area embedding $f\co \redS\to N$ 
whose image is contained in  $\inter N$; or  
\item[(ii)]
the inclusion map $\redS\hookrightarrow N$ is homotopic in $N$ to a
least-area immersion which is a two-sheeted 
covering map to a one-sided surface $K\subset\inter N$, isotopic to the core of a twisted 
$I$-bundle in 
$N$ bounded by $\redS$.
\end{itemize}
\end{proposition}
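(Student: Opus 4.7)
The plan is to double $N$ across its totally geodesic boundary, apply Proposition \ref{from FHS} inside the resulting closed hyperbolic manifold, and use the convexity of the universal cover of $N$ in $\HH^3$ to force the resulting least-area immersion into $N$. Let $DN = N\cup_{\partial N}\overline N$ be the double, so $\partial N\subset DN$ is a closed, separating, totally geodesic surface---in particular, incompressible. By Seifert--van Kampen, $\pi_1(N)$ injects into $\pi_1(DN)$, so $S$ is still incompressible in $DN$, and Proposition \ref{from FHS} supplies a least-area immersion $\tilde f\co S\to DN$, homotopic to the inclusion, which is either an embedding or a two-sheeted cover of a one-sided embedded surface $K\subset DN$.

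The heart of the argument is to show $\tilde f(S)\subset N$. Fix the universal cover $\HH^3\to DN$ and a lift $\widetilde N\subset\HH^3$ of $N$, which is a closed convex region bounded by planes covering $\partial N$. Because $\tilde f$ is homotopic in $DN$ to a map with image in $N$, an equivariant lift $\hat f\co\widetilde S\to\HH^3$ is $\pi_1(S)$-equivariant through $\pi_1(S)\hookrightarrow\pi_1(N)$. The nearest-point retraction $\rho\co\HH^3\to\widetilde N$, defined since $\widetilde N$ is convex, is $\pi_1(N)$-equivariant, is $1$-Lipschitz, and satisfies $\|d\rho_{\bx}\|<1$ at every $\bx\notin\widetilde N$: at such a point $\rho$ locally coincides with orthogonal projection onto a boundary plane of $\widetilde N$, where Proposition \ref{thog proj} gives operator norm strictly less than $1$. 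By the principles laid out in \ref{after refer later}, the composition $\rho\circ\hat f$ descends to a map $f'\co S\to N$ with $\area_{f'}(S)\le\area_{\tilde f}(S)$, and with strict inequality if $\hat f(\widetilde S)\not\subset\widetilde N$. The straight-line geodesic homotopy in $\HH^3$ from the identity to $\rho$ is $\pi_1(N)$-equivariant and descends, yielding a homotopy $f'\simeq\tilde f$ in $DN$. Since $\tilde f$ is least-area and, by regularity, a smooth immersion off a measure-zero set, the inequality must be equality, forcing $\hat f(\widetilde S)\subset\widetilde N$, hence $\tilde f(S)\subset N$.

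With $\tilde f(S)\subset N$, the case analysis becomes clean. Either $\tilde f(S)\subset\inter N$, in which case the dichotomy of Proposition \ref{from FHS} yields conclusions (i-b) and (ii) directly---any homotopy of $\tilde f$ to the inclusion inside $DN$ is improvable to one inside $N$ by composing its excursions into $\overline N$ with the reflection of $DN$ about $\partial N$, which is continuous and preserves values on $\partial N$. Otherwise $\tilde f(S)$ meets $\partial N$; since $\tilde f(S)$ lies wholly on one side of $\partial N$, every intersection point is tangential, and the strong maximum principle for minimal surfaces, together with real-analyticity of minimal immersions and the connectedness of $\tilde f(S)$ and $\partial N$, forces $\tilde f(S)=\partial N$. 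The two-sheeted-cover alternative of Proposition \ref{from FHS} is excluded because $\partial N$ is two-sided in $DN$, so $\tilde f$ is an embedding onto $\partial N$, whence $S$ is homotopic to $\partial N$ in $N$; Waldhausen's theorem on parallel incompressible surfaces then yields a trivial $I$-bundle cobounded by $S$ and $\partial N$, giving conclusion (i-a). The main obstacle is the regularity bookkeeping in the projection step---verifying that $\hat f$ is a smooth immersion at some point outside $\widetilde N$ in order to claim strict area decrease, and that the Lipschitz-but-not-smooth behavior of $\rho$ along the edges of $\partial\widetilde N$ does not block the area comparison---but the inequality in \ref{after refer later} is robust in the Lipschitz category, so this causes no essential difficulty.
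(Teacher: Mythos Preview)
Your approach is correct and genuinely different from the paper's. Both proofs double $N$ and apply Proposition~\ref{from FHS} in $DN$, but they diverge on the central step of forcing the resulting least-area immersion $\tilde f$ back into $N$. The paper proceeds in two stages: first it invokes \cite[Theorem~6.2]{FHS} (disjointness of least-area representatives of disjoint incompressible surfaces) together with Corollary~\ref{more o'larry} to get $\tilde f(S)\cap\partial N=\emptyset$, and then it runs a covering-space argument in the cover $\widetilde{DN}$ of $DN$ corresponding to $\pi_1(N)$---showing that the complement of the lift $N_0$ deformation-retracts to $\partial N_0$---to rule out $\tilde f(S)\subset\overline N$. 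Your convex-retraction argument handles both stages at once: the $\pi_1(N)$-equivariant nearest-point projection onto $\widetilde N\subset\HH^3$ is area-nonincreasing and strictly contracting off $\widetilde N$ by Proposition~\ref{thog proj}, so minimality of $\tilde f$ forces $\hat f(\widetilde S)\subset\widetilde N$. This is more geometric and self-contained, at the cost of the Lipschitz-regularity bookkeeping you flag. The paper's route buys a clean citation to a black-box result from \cite{FHS} and avoids that bookkeeping entirely.

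Two small points. First, your treatment of alternative~(ii) is a bit quick: Proposition~\ref{from FHS} only gives you a twisted $I$-bundle in $DN$ bounded by $S$, not in $N$. The paper fills this by taking a tubular neighborhood $J$ of $K$ in $\inter N$, observing that $\tilde f$ is homotopic in $N$ both to the inclusion and to a diffeomorphism onto $\partial J$, and then invoking \cite[Corollary~5.5]{Waldhausen} to upgrade homotopy to isotopy in $N$; you should do the same. Second, your maximum-principle argument for the boundary-touching case is correct but plays the role that the paper handles up front by Waldhausen's \cite[Lemma~5.3]{Waldhausen} in the case where $S$ is homotopic into $\partial N$; the paper's case split avoids needing the analytic maximum principle explicitly, since \cite[Theorem~6.2]{FHS} already packages it.
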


\begin{proof}[Proof of Proposition \ref{FHS plus epsilon}] 
We first consider the case in which
the inclusion map $\redS\hookrightarrow DN$ 
is 
homotopic in $DN$ to a map whose image is contained in $\partial
N$. 
In this case, 
since $\redS$ 
and $\partial N$ are disjoint, it  follows from \cite[Lemma
5.3]{Waldhausen} that 
$\redS$ and $\partial N$ 
cobound a submanifold of
$DN$ which is a trivial $I$-bundle over a closed, connected surface. 
Since $\redS$ is in the interior of $N$, this submanifold is contained in $N$. 
Thus 
Alternative (i-a) of
the conclusion of the proposition holds in this case.

For the rest of the proof, we
shall
assume  that:
\begin{plainclaim}\label{assumptishment}
the inclusion map $\redS\hookrightarrow DN$ 
is not
homotopic in $DN$ to a map whose image is contained in $\partial
N$.
\end{plainclaim}

Proposition \ref{from FHS}, applied with the double
$\mathit{DN}\supset N$ playing the role of $M$, implies 
that 
the inclusion $\redS\hookrightarrow \mathit{DN}$ is homotopic in $\mathit{DN}$ 
to a least-area immersion $f\co \redS\to\mathit{DN}$ 
which is
either (i$'$) an embedding, or 
(ii$'$) a two-sheeted covering map to a one-sided surface $K\subset
DN$.  (The second alternative of the conclusion of Proposition
\ref{from FHS} is stronger than (ii$'$), but the stronger information
will not be used.)

The 
inclusion map $\partial N\hookrightarrow\mathit{DN}$ of the 
totally geodesic surface $\partial N$ 
is least-area, 
by Corollary \ref{more o'larry}, 
so 
 since
$\redS$
and $\partial N$ are disjoint,
Theorem 6.2 of \cite{FHS} further asserts that 
$f(\redS)$ is also disjoint from or
identical to $\partial N$. 
In view of \ref{assumptishment} we cannot have $f(\redS)=\partial
N$. Hence:
\Equation\label{display it}
f(\redS)\cap\partial N = \emptyset.
\EndEquation

Let us now fix 
a base point in $\partial N\subset N\subset DN$, 
and consider the covering space $p\co\widetilde{\mathit{DN}}\to\mathit{DN}$  determined by 
the image of the inclusion homomorphism $\pi_1(N)\to\pi_1(DN)$. There
is a submanifold $N_0$ of $\tDN$ which is mapped \diffeo morphically onto
$N$ by $p$; and since $\partial N$ is connected, the submanifold
$Z\doteq\tDN-\inter N_0$ is also connected. 
We claim:
\begin{plainclaim}\label{this one first} 
The inclusion $\partial N_0\hookrightarrow Z$ is a homotopy equivalence.
\end{plainclaim}

To prove \ref{this one first}, note that since 
$N$ is boundary-irreducible, the surface $\partial N_0$ is
incompressible; a priori this implies that, using a base point in  $\partial N_0$
lying over the chosen base point of $\partial N$, the group $\pi_1(\tDN)$ is
canonically identified with a free product with amalgamation
$\pi_1(N_0)\star_{\pi_1(\partial N_0)}\pi_1(Z)$. In particular, the inclusion
homomorphisms from $\pi_1(N_0)$, $\pi_1(Z)$ and $\pi_1(\partial N_0)$ to
$\pi_1(\tDN)$ are
injective, and if $A$, $B$ and $C$ denote the respective images of
these injections we have $A\cap B=C$. But by the construction of
$\tDN$ we have $A=\pi_1(\tDN)$, and hence $B=C$; that is, the inclusion
homomorphism $\pi_1(\partial N_0)\to\pi_1(Z)$ is an isomorphism. But $\tDN$ is
aspherical since $DN$ is, and since $\partial N_0$ is incompressible, $Z$ is
also aspherical. The genus-$2$ surface $\partial N_0$ is also
aspherical. 
This implies \ref{this one first}.

In particular it follows from \ref{this one first} that 
the inclusion $N_0\hookrightarrow\tDN$
is a homotopy equivalence (which could be seen more directly). 

Next, we claim: 
\begin{plainclaim}\label{stake a claim}
 $f(\redS)\subset \mathrm{int}\,N$, 
and  $f$ is homotopic 
in $\mathrm{int}\,N$
to the inclusion $\redS\hookrightarrow N$. 
\end{plainclaim}

To prove \ref{stake a claim}, we first observe that 
the inclusion map $\redS\hookrightarrow N$ lifts to an embedding $j$ of $\redS$ in $\tDN$. By the covering homotopy property of covering
spaces, $j$ is homotopic in $\tDN$ to some lift $\tf$ of
$f$. 
It follows from (\ref{display it}) that 
either
$\tf(\redS)\subset\inter N_0$ or $\tf(\redS)\subset\inter Z$. But if
$\tf(\redS)$ were contained in $\inter Z$, then 
by \ref{this one first}, 
$\tf$ would be homotopic in $\tDN$ to a map with image
contained in $\partial N_0$, and hence  $\redS\hookrightarrow \mathit{DN}$  
would be homotopic in $\mathit{DN}$ to a map with image
contained in $\partial N$; 
this contradicts \ref{assumptishment}.
It follows
that $\tf(\redS)\subset\inter N_0$. Now since $j$ and $\tf$ are homotopic
in $\tDN$ and both map $\redS$
into $\inter N_0$, and since 
we have observed that 
$N_0\hookrightarrow\tDN$
is a homotopy equivalence, 
the maps $j$ and $\tf$ are in fact homotopic in $\inter N_0$. 
This immediately
implies 
\ref{stake a claim}.

Since $f$ is a least-area map from $\redS$ to $DN$, and since $f(\redS)\subset
N$ by \ref{stake a claim}, it follows from the definition that $f$
is a 
least-area map from $\redS$ to $N$. We also know from \ref{stake a
  claim} that the least-area map $f:\redS\to N$ and the inclusion
$\redS\hookrightarrow N$ are homotopic maps from $\redS$ to $N$.

If (i$'$) holds, i.e. if $f$ is an embedding, then it follows
from \ref{stake a claim} that Alternative 
(i-b) 
of the present
proposition holds.

Now suppose that (ii$'$) holds. Thus $f$ is a two-sheeted covering map to a one-sided surface $K\subset
DN$.  According to \ref{stake a claim} we have $K\subset\inter N$.
Let $J$ be a tubular neighborhood of $K$ in  $\inter N$. Then $J$ is a twisted $I$-bundle with core $K$.
Now since $\redS$ is orientable and $K$ is not, the covering map $f:\redS\to
K$ must be equivalent to the orientation covering of $K$. It follows
that $f$ is homotopic in $J\subset\inter N$ to a diffeomorphism
$g:\redS\to\partial J$. But by \ref{stake a claim}, $f$ is homotopic 
in $\mathrm{int}\,N$
to the inclusion $\redS\hookrightarrow N$. Hence the maps $g$ and
$\redS\hookrightarrow N$, which may both be regarded as embeddings of $\redS$
in $\inter N$, are homotopic in $\inter N$. It now follows from
Corollary 5.5 of \cite{Waldhausen} that the surfaces $\redS$ and $\partial
J=g(\redS)$ are isotopic in $N$.

Thus we may fix  a self-diffeomorphism $h$ of $N$, isotopic to the
identity, such that $h(\redS)=\partial J$. If we now set $J_1=h^{-1}(J)$
and $K_1=h^{-1}(K)$, then $J_1$ is a twisted $I$-bundle whose boundary
is $\redS$, while $K_1$ is a core of $J_1$ and is isotopic to  $K$. This
gives Alternative (ii) of the conclusion of  the present proposition.
\end{proof}

The next result is a variation on [Theorem 9.1] of \cite{ASTD} in which 
a strengthened hypotheses (that $M$ is 
compact 
rather than just having
 finite volume) yields 
the strengthened conclusion of a strict inequality. 

\Theorem\label{from ast}
Let $S$ be a 
connected
incompressible surface in a 
compact, 
orientable hyperbolic
$3$-manifold $M$ 
with empty or connected, totally geodesic boundary
(so that each component of $M\cut S$ has non-empty boundary and is
\simple\ by the discussion in \ref{top back}, and hence $\kish(M\cut S)$
is defined).
Then 
we have
\Equation\label{efficacious}
\vol M>\voct\chibar(\kish(M\setminus\setminus S)).
\EndEquation
\EndTheorem

\Proof
The incompressibility of $S$, together with the hypotheses concerning
the manifold $M$, implies that $S$ has genus at least $2$.

Since $S$ is incompressible, and $M$ has at most one boundary component, 
the hypotheses of  
either 
Proposition \ref{from FHS} 
or Proposition \ref{FHS plus epsilon} hold. 
Hence either $\partial M=\emptyset$ and one of the alternatives (i),
(ii) of the conclusion of Proposition
\ref{from FHS} holds, or $\partial M$ is connected and one of the alternatives (i-a),
(i-b) or (ii) of the conclusion of Proposition
\ref{FHS plus epsilon} holds.

First consider the case in which  $\partial M$ is connected and Alternative (i-a) of Proposition
\ref{FHS plus epsilon} holds. 
Let $J$ denote
the submanifold of
$M$ which is a trivial $I$-bundle over a closed, connected surface and
has boundary $S\cup\partial M$. 
Let $N$ denote the component of $M\setminus\setminus S$ that is
distinct from $J$. 
Then since $\kish(J) = \emptyset$, and $N$ is \diffeo morphic to $M$,
we have
\[ \chibar(\kish(M\setminus\setminus S)) = \chibar(\kish N) = \chibar(\kish M). \]
Since $\partial M$ is by totally geodesic by hypothesis, it is 
least-area and hence minimal by Corollary \ref{more o'larry}. 
The conclusion thus follows from Lemma \ref{ASTD plus epsilon}.

In the rest of the proof we shall assume that either $\partial M=\emptyset$ and one of the alternatives (i),
(ii) of the conclusion of Proposition
\ref{from FHS} holds, or $\partial M$ is connected and one of the alternatives 
(i-b) or (ii) of the conclusion of Proposition
\ref{FHS plus epsilon} holds.  
Thus 
the inclusion map $S\hookrightarrow M$ is homotopic 
in $M$ to a smooth least-area immersion $f$ (which 
is therefore 
minimal) 
and has image contained in $\inter M$, 
and  $f$ is either 
(\redI) 
an
embedding, or 
(\redII)
a 
two-sheeted covering map 
to a one-sided surface $K$
isotopic to the core of a twisted $I$-bundle in $M$ bounded by $S$. 
In 
Case (\redI), 
Corollary 5.5 of \cite{Waldhausen} implies that 
$S$ is isotopic to the image of $f$, which 
in this case we denote by $K$. 
In either case,
$N \doteq M\setminus\setminus K$ 
 is 
a hyperbolic $3$-manifold with
minimal surface boundary. Furthermore, $N$ is \diffeo morphic to $M\cut S$ in Case
(\redI), while in Case (\redII) $M\cut S$ is \diffeo morphic to the disjoint
union $N\discup J$, where $J\subset M$ is the twisted $I$-bundle
bounded by $S$. 
Note also that in Case (\redII), since $J$ is an
$I$-bundle over a closed surface
and $\partial J=S$ has genus at least $2$,
the manifold
$J$ is \simple\ and has
non-empty boundary, 
and $\kish
J=\emptyset$. Hence
in either case, the components of $N$ are \simple\ $3$-manifolds with
non-empty boundary, and $\kish (M\cut S)$ is 
diffeomorphic (as a manifold with corners) 
to $\kish
N$. In particular we have
\[ \chibar(\kish N) = \chibar(\kish(M\setminus\setminus S)) .\]
The desired conclusion now follows from Lemma \ref{ASTD plus epsilon}, 
upon noting that $\vol N = \vol M$.
\EndProof

\section{The capstone volume-topology dichotomy in the geodesic boundary case}\label{whataterribletitle}

\numberwithin{para}{section}

In this section 
we prove this paper's main result for hyperbolic $3$-manifolds
with totally geodesic boundary, Theorem \ref{bounding main}, which was
stated in the Introduction. 
First, we 
strengthen Theorem 7.4 of \cite{DeSh} 
 using Proposition \ref{heeere's trimonny!} and Theorem \ref{from ast}.

\begin{theorem}\label{7.4 upgrade} Let $N$ be a compact, orientable hyperbolic $3$-manifold with $\partial N$ connected, totally geodesic, and of genus $2$, such that there is a $(1,1,1)$-hexagon in $\widetilde{N}$ and 
\[  \cosh \ell_1 < \frac{\cos (2\pi/9)}{2\cos (2\pi/9) - 1} = 1.43969... \]
Then 
either
the Heegaard genus 
$\mathrm{Hg}(N)$
is at most 
$4$ or $\mathrm{vol}(N) > 2\voct$.\end{theorem}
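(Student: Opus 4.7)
The plan is to follow the proof scheme of \cite[Th.~7.4]{DeSh}, replacing its appeal to \cite[Th.~9.1]{ASTD} with Theorem \ref{from ast} in the relevant case. The opening move is to invoke Proposition \ref{heeere's trimonny!} to produce a submanifold $X\subset N$ with $\partial N\subset X$ that is non-degenerate trimonic relative to $\partial N$. We may then assume $\Hg(N)\ge 5$, since otherwise the theorem's first alternative holds; the goal becomes to conclude $\vol(N)>2\voct$.

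With $X$ in hand, I would apply the topological machinery of \cite[\S\S 4--5]{DeSh} to $X$, which (as explicitly noted in the Introduction) carries over without modification to the present setting. This machinery, together with the Heegaard genus bound $\Hg(N)\ge 5$, divides the analysis into cases based on the structure of $\Fr_N X$ and of the characteristic submanifold of $N\cut\Fr_N X$. In the cases handled by purely topological estimates in \cite{DeSh}, I would import those arguments verbatim. The only case that requires a new input is the one where the volume bound in \cite{DeSh} was obtained via \cite[Th.~9.1]{ASTD}, which gave only $\vol(N)\ge 2\voct$ non-strictly; in that case, the topological analysis produces a connected incompressible surface $S\subset N$, built from the frontier of $X$, with $\chibar(\kish(N\cut S))\ge 2$. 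Since $\partial N$ is connected and totally geodesic, the hypotheses of Theorem \ref{from ast} are met with $M=N$ and this $S$, yielding
\[
\vol(N) \;>\; \voct\cdot\chibar(\kish(N\cut S)) \;\ge\; 2\voct,
\]
as required. This strict inequality is precisely the strengthening that Theorem \ref{from ast} supplies over \cite[Th.~9.1]{ASTD}.

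The main obstacle is ensuring that the case analysis of \cite[\S\S 4--5]{DeSh}, originally carried out under a tighter bound on $\cosh\ell_1$, still applies under the weaker bound allowed by Proposition \ref{heeere's trimonny!}. This should be a bookkeeping issue rather than a substantive one, since the topological arguments in those sections operate on the abstract trimonic structure and do not use the numerical bound on $\ell_1$ directly; the relaxed bound is needed only to construct a non-degenerate trimonic $X$, which Proposition \ref{heeere's trimonny!} already provides. A secondary point to verify is that the surface $S$ extracted from $\Fr_N X$ is genuinely connected and incompressible in $N$, and that $N\cut S$ has the structural properties required to extract $\chibar(\kish(N\cut S))\ge 2$ from $\Hg(N)\ge 5$; these are exactly the hooks that the non-degeneracy of $X$ and the definitions in \cite[\S 5]{DeSh} are designed to provide.
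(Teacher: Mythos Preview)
Your proposal is essentially correct and follows the same approach as the paper: invoke Proposition~\ref{heeere's trimonny!} to produce the non-degenerate trimonic submanifold $X$, import the topological case analysis from the proof of \cite[Th.~7.4]{DeSh} verbatim, and in the remaining case apply Theorem~\ref{from ast} in place of \cite[Th.~9.1]{ASTD} to upgrade the volume bound to a strict $\vol(N)>2\voct$.

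One small point of framing is slightly off. The case split in \cite[Th.~7.4]{DeSh} is organized around the complementary piece $V=\overline{N-X}$: either $V$ is boundary-reducible, or $V$ is (the underlying manifold of) a book of $I$-bundles, or neither. In the first two cases the topological arguments of \cite{DeSh} prove $\Hg(N)\le4$ directly, and the hypothesis $\Hg(N)\ge5$ then simply eliminates those cases. In the remaining case one takes $S=T=\partial V=\Fr_N X$; its incompressibility follows from the boundary-irreducibility of $V$, and the bound $\chibar(\kish(N\cut T))\ge2$ comes not from $\Hg(N)\ge5$ but from the facts (recorded in \cite{DeSh}) that $\kish X\ne\emptyset$ and $\kish V\ne\emptyset$, each contributing at least $1$. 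So your ``secondary point to verify'' is handled by the case assumption and the trimonic structure, not by the Heegaard bound.
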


Recall from the Introduction that the quantity $\voct = 3.6638...$ referred to above is the volume of a regular ideal octahedron in $\mathbb{H}^3$, four times Catalan's constant.

\begin{proof} We follow the proof of \cite[Th.~7.4]{DeSh}, replacing
  its first paragraph's appeal to Propositions 6.8 and 6.9 of that
  paper by 
an appeal to Proposition \ref{heeere's trimonny!} of the present paper 
in order to produce the non-degenerate trimonic submanifold $X$ of $N$
under the weaker bound on $\cosh\ell_1$. 
In the cases that $V = \overline{N-X}$ is boundary-reducible or of the form $|\calw|$
for a book of $I$-bundles $\calw$ (see \ref{top back}), the 
previous proof's topological arguments apply verbatim to show 
that $\mathrm{Hg}(N)\le 4$.

The rest of the argument will be devoted to the case in which we have:
\begin{plainclaim}\label{this case}
$V 
\doteq
\overline{N-X}$ is boundary-irreducible and is not of the form $|\calw|$
for a book of $I$-bundles $\calw$.
\end{plainclaim}
In this case,
we will slightly strengthen 
 the geometric half of the conclusion's
dichotomy, from $\mathrm{vol}(N) > 7.32$ 
in \cite[Th.~7.4]{DeSh}
to
$\mathrm{vol}(N) > 2\voct$ here
. The proof of \cite[Th.~7.4]{DeSh} appealed to
Theorem 9.1 of \cite{ASTD}. 
 To secure the improvement, 
we will appeal to Theorem \ref{from ast} instead.

As recorded in the proof of
\cite[Th.~7.4]{DeSh}, 
the condition \ref{this case} 
implies that $T = \partial V$ is
incompressible in $N$.
According to the discussion in Subsection \ref{top back} of the
present paper, this implies that the
components of $N\cut T$, which are canonically identified with $X$
and $V$, are \simple; they obviously have non-empty boundaries, and so
$\kish V$ and $\kish X$ are defined.
It is also recorded in the proof of
\cite[Th.~7.4]{DeSh} that
$\kish V$ and $\kish X$
are non-empty. 
Therefore $\chibar(\kish(N\cut T))\ge 2$, and 
applying 
Theorem \ref{from ast} 
with $T$ playing the role of $S$ there 
gives the desired volume bound.
\end{proof}

We conclude this section with the proof of the following result, which
upgrades Theorem 1.1 of \cite{DeSh} and was stated in the Introduction.

\begin{theorem}\label{bounding main}
\ThrmBoundingMain\end{theorem}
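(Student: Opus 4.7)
The plan is to dispose of the theorem by a case split on whether the universal cover $\widetilde{N}$ contains a $(1,1,1)$-hexagon, and, when it does, on the size of $\cosh\ell_1$. In each case we will show that the volume strictly exceeds $2\voct = 7.3276\ldots$; note that $7.4 > 2\voct$, so the bounds established in Sections \ref{bound muffin} and \ref{bound vol} comfortably exceed our target.

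First, if $\widetilde{N}$ contains no $(1,1,1)$-hexagon, then Corollary \ref{larry} gives $\mathrm{vol}(N) \ge 7.4 > 2\voct$ directly, without any use of the Heegaard genus hypothesis. Second, suppose $\widetilde{N}$ contains a $(1,1,1)$-hexagon and $\cosh\ell_1 \ge 1.24$. Then Theorem \ref{one two three five} applies to give $\mathrm{vol}(N) \ge 7.4 > 2\voct$; again the Heegaard genus hypothesis is not needed.

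The remaining case is when $\widetilde{N}$ contains a $(1,1,1)$-hexagon and $\cosh\ell_1 < 1.24$. Since
\[ 1.24 < \frac{\cos(2\pi/9)}{2\cos(2\pi/9)-1} = 1.43969\ldots, \]
the hypotheses of Theorem \ref{7.4 upgrade} are satisfied. That result then yields a dichotomy: either $\mathrm{Hg}(N) \le 4$ or $\mathrm{vol}(N) > 2\voct$. This is where the hypothesis $\mathrm{Hg}(N) \ge 5$ enters the argument in an essential way: it rules out the first alternative, forcing $\mathrm{vol}(N) > 2\voct$. Combining the three cases completes the proof.

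The only step requiring substantive input beyond the results we cite is the Heegaard-genus-based exclusion above; this is really the heart of the argument, and it relies on the topological analysis of the trimonic submanifold $X \subset N$ built in Section \ref{trimonic} from the $(1,1,1)$-hexagon, together with the volume improvement supplied by Theorem \ref{from ast} via minimal-surface techniques. Everything else is a clean book-keeping assembly of the bounds on $\cosh\ell_1$ and $\cosh\ell_2$ developed earlier.
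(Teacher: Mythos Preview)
Your proof is correct and follows essentially the same approach as the paper's: a three-way case split into (no $(1,1,1)$-hexagon; $(1,1,1)$-hexagon with large $\cosh\ell_1$; $(1,1,1)$-hexagon with small $\cosh\ell_1$), invoking Corollary \ref{larry}, Theorem \ref{one two three five}, and Theorem \ref{7.4 upgrade} respectively. The only cosmetic difference is that you place the threshold at $\cosh\ell_1 = 1.24$ (matching the hypothesis of Theorem \ref{one two three five}) whereas the paper places it at $1.439$ (matching that of Theorem \ref{7.4 upgrade}); either choice works since the two theorems' ranges overlap on $[1.24,1.439]$.
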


\begin{proof} For $N$ satisfying the Theorem's hypotheses, if its universal cover $\widetilde{N}$ has no $(1,1,1)$-hexagon then by Corollary \ref{larry}, $\mathrm{vol}(N) \ge 7.4 > 2\voct$. If $\widetilde{N}$ has a $(1,1,1)$-hexagon and the length $\ell_1$ of its shortest return path satisfies $\cosh\ell_1 > 1.439$, Theorem \ref{one two three five} gives the same lower bound on $\mathrm{vol}(N)$. We are left with the case that $\widetilde{N}$ does have a $(1,1,1)$-hexagon and $\cosh\ell_1 \le 1.439$. Here Theorem \ref{7.4 upgrade} implies that since $\mathrm{Hg}(N)\ge 5$, $\mathrm{vol}(N) > 2\voct$.\end{proof}

\section{
Background for the results on closed manifolds
}\label{closed background}

\begin{definitionsconventionsremarks}\label{general}
\textnormal{
The definitions, conventions and remarks given in Subsection \ref{top
  back} will be freely used in the rest of the paper, as will the ones
given below.
}

\textnormal{
We recall that the {\it rank} of a group $\Pi$ is the minimum
cardinality of a generating set for $\Pi$. 
}

\textnormal{
As was mentioned in the Introduction, a group $\Pi$ is said to be {\it $k$-free} for a given positive integer $k$ if every subgroup of $\Pi$ whose rank is at most $k$ is free.
}

\textnormal{
A group is said to be {\it freely indecomposable} if it is not trivial
or infinite cyclic, and is not a free product of two non-trivial
subgroups.
}

\textnormal{
If $M$ is a compact, connected, orientable $3$-manifold, its Heegaard
genus $\Hg(M)$ (see \ref{top back})
is bounded below by $\rank\pi_1(M)$, which in turn is bounded below by $\dim
H_1(M;\FF_p)$ for each prime $p$. 
}

\end{definitionsconventionsremarks}

\Proposition\label{live or die}
Let $M$ be a compact, orientable $3$-manifold-with-boundary, and let
$F$ be a field. Then the dimension of the image of the inclusion
homomorphism $H_1(\partial M;F)\to H_1(M;F)$ is equal to the sum of
the genera of the components of $\partial M$.
\EndProposition

\Proof
According to \cite[Lemma 3.5]{hatcher-basic}, the dimension of the image of the boundary
homomorphism $H_2(M,\partial M;\QQ)\to H_1(\partial M;\QQ)$ is
one-half the dimension of $H_1(\partial M;\QQ)$. An examination of the
proof of [Lemma 3.5] of \cite{hatcher-basic} reveals that the proof
goes through without change if $\QQ$ is replaced by any field. Thus if
$\partial:H_2(M,\partial M;F)\to H_1(\partial M;F)$ denotes the
boundary homomorphism, the dimension of the image of $\partial$ is
$(\dim H_1(\partial M;F))/2$. But we have $\dim H_1(\partial M;F)=2G$,
where $G$ denotes the sum of
the genera of the components of $\partial M$; furthermore, by the
homology exact sequence of the pair $(M,\partial M)$, the image of
$\partial$ is the kernel of the inclusion
homomorphism $\iota:H_1(\partial M;F)\to H_1(M;F)$. Hence the kernel
of $\iota$ has dimension $G$, and since the domain of $\iota$ is a
vector space of dimension $2G$, the image of $\iota$ has dimension $G$
as well.
\EndProof

\Proposition\label{non-sing thing}
Let $k\ge3$ be an integer, and let $M$ be a closed,
orientable, hyperbolic $3$-manifold. Suppose that $\dim
H_1(M;\FF_2)\ge  \max(3k-4, 6)$, and that
$\pi_1(M)$ is not $k$-free. Then $M$ contains a closed incompressible surface
of some genus $g$ with $2\le g\le k-1$.
\EndProposition

\Proof
Proposition 8.1 of  \cite{CS_vol} includes the fact that if
 $k\ge  3$ is an integer and $M$ is a closed, orientable hyperbolic
$3$-manifold with $H_1(M;\FF_2)\ge  \max(3k-4, 6)$, then
either $\pi_1(M)$ is $k$-free, or M contains a closed incompressible
surface of genus at most $k-1$. Since the genus of an incompressible
surface in $M$ is at least $2$ by \ref{general}, the present
proposition follows.
\EndProof

\Proposition\label{3-free case}
If $M$ is a closed, orientable, hyperbolic $3$-manifold and $\pi_1(M)$ is $3$-free, then $\vol M>3.08$.
\EndProposition

\Proof
This is included in Corollary 9.3 of \cite{ACS}.
\EndProof

\Lemma\label{spex}
Let $M$ be a closed $3$-manifold. Set $V=H_1(M;\FF_2)$, and suppose that $P$ is a codimension-$2$ subspace of $V$. Set $k=dim P=(\dim V)-2$. Let $\tM$ denote the regular covering of $M$, with covering group $\ZZ/2\ZZ\times\ZZ/2\ZZ$, that is determined by $P$ (so that the normal subgroup of $\pi_1(M)$ corresponding to $\tM$ is the preimage of $P$ under the Hurewicz homomorphism $\pi_1(M)\to V$). Then $\dim H_1(\tM;\FF_2)\ge2k+1$.
\EndLemma

\Proof
Let $\Gamma_1$ denote the normal subgroup of $\pi_1(M)$ generated by
all 
commutators 
and squares. Thus $\Gamma_1$ is the kernel of the Hurewicz homomorphism $\eta:\pi_1(M)\to H_1(M;\FF_2)$.
According to the case $p=2$ of \cite[Lemma 1.5]{SW}, if $n$ is any integer less than or equal to $(\dim V)-2$,  if $E$ is any subgroup generated by $n$ elements of $\pi_1(M)$, and if $D$ denotes the subgroup $D\doteq E\Gamma_1$ of $\pi_1(M)$, we have $\dim H_1(D;\FF_2) \ge 2n+1$. To apply this, we take $n=k=(\dim V)-2$, we choose elements $x_1,\ldots,x_k$ of $\pi_1(M)$ whose images under $\eta$ form a basis of $P$, and we take $E$ to be the subgroup of $\pi_1(M)$ generated by $x_1,\ldots,x_k$. Then $D\doteq E\Gamma_1=\eta^{-1}(P)$, and hence $H_1(\tM;\FF_2)$ is isomorphic to $H_1(D;\FF_2)$. The result now follows.
\EndProof

\section{Incompressible surfaces, homology rank, and
  volume}\label{incompressible section}

\Lemma\label{teenage werewolf}
If  $\calw$ is a connected book of $I$-bundles 
(see \ref{top back}), 
each of whose pages has negative
Euler characteristic, then $\dim
H_1(|\calw|)\le2\chibar(|\calw|)+1$.
\EndLemma

\Proof
This is Lemma 2.11 of \cite{ACS}. (The
  connectedness hypothesis is missing from the statement of \cite[Lemma
  2.11]{ACS}, but it
  is used in the proof, and holds in the context of the applications given in \cite{ACS}. 
\EndProof

\Lemma\label{a beard}
Let $g$ be a positive integer, and $M$ be a closed, orientable
hyperbolic $3$-manifold that contains a closed, connected
incompressible surface of genus $g$. Suppose that 
$\Hg(M)>2g+1$. 
Then there exist a closed, connected  incompressible surface $S\subset M$ such that either 
\begin{enumerate}
\item   $\chibar(\kish(M\setminus\setminus S))\ge2$, or
\item  the surface $S$ separates $M$, and $M\setminus\setminus S$ has an acylindrical component.
\end{enumerate}
\EndLemma

\Proof
According to \cite[Proposition 13.2]{kfree-volume}, the hypotheses
imply that there is a connected incompressible surface $S\subset M$
such that either Alternative (2) of the statement of the present lemma holds, or one of
the following alternatives holds:
\begin{itemize}
\item[(1a)] the surface $S$ separates $M$, and for each component $B$ of $M\setminus\setminus S$ we have $\kish(B)\ne\emptyset$; or
\item[(1b)]  the surface $S$ does not separate $M$, and
  $\chibar(\kish(M\setminus\setminus S))\ge2h-2$, where $h$ denotes
  the genus of $S$.
\end{itemize}
(Proposition 13.2 of \cite{kfree-volume} also gives information about
the genus of $S$, which will not be needed here.)

If Alternative (1a) holds then $\kish(M\setminus\setminus S)$ has at
least two components, and according to 
\redtopback\ 
we have $\chibar(K)\ge1$ for each component $K$ of
$\kish(M\setminus\setminus S)$. Hence Alternative (1) of the
conclusion of the present lemma holds in this case.
If Alternative (1b) holds, then since the genus $h$ of $S$ is at least
$2$ by \ref{general}, we have $\chibar(\kish(M\setminus\setminus
S)\ge2$, i.e.  Alternative (1) of the present lemma holds in this
case as well.
\EndProof

The following result, like Theorem \ref{from ast}, is proved using the
techniques of \cite{ASTD}.

\Proposition\label{more from CDS}
Let $S$ be a connected, incompressible surface in a closed, orientable
hyperbolic $3$-manifold $M$ and let $A$ be an acylindrical component
of $M\setminus\setminus S$. Then
$A$ is \diffeo morphic to a hyperbolic manifold $N$ with totally geodesic
boundary, and $\vol M\ge\vol N$.
\EndProposition

\Proof
This is a formal consequence of Propositions 6.1 and 6.2 of
\cite{CDS}. Note that if $M$, $S$ and $A$ satisfy the hypotheses of
Proposition \ref{more from CDS}, then $A$ is irreducible,
boundary-irreducible and acylindrical. In Section 6 of \cite{CDS}, a certain real-valued
invariant $\geodvol A$ is defined for a compact, connected
$3$-manifold $A$ with non-empty boundary. Proposition 6.1 of
\cite{CDS} asserts that if $A$ is irreducible, boundary-irreducible
and acylindrical, then $A$ is \diffeo morphic to a hyperbolic manifold $N$ with totally geodesic
boundary, and $\vol N=\geodvol A$. Proposition 6.2 of \cite{CDS}
asserts that if $A$ is an acylindrical component of
$M\setminus\setminus S$, where  $S$ is a connected, incompressible surface in a closed, orientable
hyperbolic $3$-manifold $M$, then $\vol M\ge\geodvol A$. Proposition
\ref{more from CDS} now follows immediately.

(The actual definition of $\geodvol A$ is that it is one-half the
Gromov volume of the double of $A$. This definition of course enters
into the proofs of Propositions 6.1 and 6.2 of \cite{CDS}.)
\EndProof

\Proposition\label{who you}\WhoYouProp
\EndProposition

\Proof
Set $r=\dim
H_1(M;\FF_2)$.

We shall assume that
$\pi_1(M)$ is not $k$-free, and show that $\vol
M>2\voct$. Since $\pi_1(M)$ is not $k$-free and $r\ge \max(3k-4,6)$, it follows from Proposition
\ref{non-sing thing} that $M$ contains a closed incompressible surface
of some genus $g$ with $2\le g\le k-1$.

Since $k\ge g+1$, we have $r\ge \max(3k-4,6)>2g+1$. In particular,
we have $\Hg(M)>2g+1$ (see \ref{general}).
Hence by 
Lemma \ref{a beard}, there is a closed, connected
incompressible surface $S\subset M$ such
that either 
\begin{enumerate}
\item   $\chibar(\kish(M\setminus\setminus S))\ge2$, or
\item  the surface $S$ separates $M$, and $M\setminus\setminus S$ has an acylindrical component.
\end{enumerate}

If (1) holds, 
Theorem \ref{from ast} gives
$\vol M >2\voct$, so that the conclusion of the lemma is true in
this case. 
For the rest of the proof, we shall assume that (2) holds
but that (1) does not.

Fix an acylindrical component
$A$ of $M\setminus\setminus S$, and let $B$ denote the other component
of $M\setminus\setminus S$. Since $A$ is acylindrical, it follows from
Proposition \ref{more from CDS}
that $A$ is \diffeo morphic to a hyperbolic $3$-manifold $N$ with totally
geodesic boundary, and that
\Equation\label{goombah}
\vol M\ge\vol N.
\EndEquation

According to 
\redtopback,
we have $\chibar(K)\ge1$ for each component $K$ of
$\kish(M\setminus\setminus S)$; thus
$\chibar(\kish(M\setminus\setminus S))$ is bounded below by the number
of components of $\kish(M\setminus\setminus S)$. Since $A$ is acylindrical, we have
$\kish(A)=A\ne\emptyset$ by \redtopback. If $\kish(B)$ were also
non-empty, it would follow that $\kish(M\setminus\setminus S)$ had at
least two components, and therefore that 
$\chibar(\kish(M\setminus\setminus S))\ge2$; this would mean that
Alternative (1) above holds, a contradiction. Hence $\kish(B)=\emptyset$.

Consider the subcase in which $S$ has genus at least $3$. If $h$
denotes the genus of $S$, it follows from Theorem 5.4 of
\cite{Miy} that $\vol N$ is bounded below by $h\vol T_{\pi/(3h)}$, where
$T_{\theta}$ denotes a truncated regular simplex of dihedral angle
$\theta$, in the sense defined in \cite{Miy}. 
Proposition 1.1 of
\cite{Miy}
gives a formula for the volume of $T_\theta$ 
which is visibly 
monotone decreasing in $\theta$. Since $h\ge3$ it follows that $\vol
N\ge h\vol T_{\pi/(3h)}\ge3\vol T_{\pi/9}=10.4\ldots>2\voct$, 
and
the lemma is established in this subcase.

There remains the subcase in which $S$ has genus $2$. In the following
argument, all homology groups
in this argument will be understood to have coefficients in $\FF_p$.

Since $\kish(B)=\emptyset$, it follows from an observation made in
\ref{top back} that 
the connected manifold $B$ is the underlying
manifold of a  book of $I$-bundles $\calw$, each of whose pages has negative
Euler characteristic. Hence Lemma \ref{teenage werewolf} gives
$\dim H_1(B)\le1+2\chibar(B)$. But since $S$ has genus $2$
we have $\chibar(S)=2$, and since $S=\partial B$ we have
$\chibar(B)=\chibar(S)/2=1$. Hence $\dim H_1(B)\le3$.

Consider the Mayer-Vietoris fragment
$$
\xymatrix@C=1em{
H_1(S) \ar[r]^-j& H_1(A)\oplus H_1(B)\ar[r] &H_1(M)\ar[r]
&H_0(S)\ar[r]^-\tau& H_0(A)\oplus H_0(B).
}
$$
The homomorphism $\tau$ is injective since $S$ is connected and
$A\ne\emptyset$. Hence if $J$ denotes the image of $j$, we have an
exact sequence
$$
\xymatrix@C=1em{
0\ar[r]&J \ar[r]^-j& H_1(A)\oplus H_1(B)\ar[r] &H_1(M)\ar[r]
&0.
}
$$
The exactness of the latter sequence implies that
\Equation\label{arachnid}
\dim H_1(A)=\dim J+\dim H_1(M)-\dim H_1(B).
\EndEquation

We have seen that $\dim H_1(B)\le3$, and by definition we have $\dim
H_1(M)=r$. To estimate $\dim J$, we recall that the homomorphism
$j:H_1(S)\to H_1(A)\oplus H_1(B)$ is defined by
$j(x)=(\iota_A(x),\iota_B(x))$, where $\iota_A:H_1(S)\to H_1(A)$ and
$\iota_B:H_1(S)\to H_1(B)$ are the inclusion homomorphisms. Hence
$\dim J$ is bounded below by the dimension of the image of
$\iota_A$. According to Proposition
\ref{live or die}, 
the dimension of the image of $\iota_A$ is equal to the genus of $S$,
which is $2$. Thus $\dim J\ge2$, and (\ref{arachnid}) implies
that
$\dim H_1(A)\ge2+r-3=r-1$, or equivalently that $\dim H_1(N)\ge r-1$.

In particular,
we have $\Hg(N)\ge
r-1$ (see \ref{general}). 
Since $r\ge6$ by hypothesis, 
$\Hg(N)$ 
is in particular at least $5$. According to Theorem \ref{bounding main},
this implies that $\vol N>2\voct$.
Since $\vol M\ge\vol N$ by (\ref{goombah}), the conclusion of the
lemma follows in this final subcase.
\EndProof

\section{
Homology of manifolds with volume at most $\voct/2$
}

The following result was stated in the introduction.

\Theorem\label{main ingredient}\MainIngredThrm\EndTheorem

\Proof
We set $\Pi=\pi_1(M)$, and $V=H_1(M;\FF_2)$. We  identify  $V$ with
$H_1(\Pi;\FF_2)$.

We shall assume that $\dim V\ge5$ and show that $\vol M>\voct/2$, thus
proving the theorem.

If $\Pi$ is $3$-free, it follows from Proposition \ref{3-free case}
that $\vol M>3.08$. Since $3.08>\voct/2\ge\mu$, the conclusion holds
in this case. For the rest of the proof we shall assume that
$\Pi$ is not $3$-free. 
We fix a subgroup $\Delta$ of $\Pi$ which has
rank at most $3$ and is not free. The image $J$ of the inclusion homomorphism
$H_1(\Delta;\FF_2)\to H_1(\Pi;\FF_2)=V$ has dimension
at most $3$ since $\rank\Delta\le3$.
Since $\dim V\ge5$, there is a codimension-$2$ subspace $P$ of  $V$
containing $J$. The subspace $P$ defines a regular covering space
$\tM$ of $M$ whose covering group is isomorphic to $\ZZ/2\ZZ\times
\ZZ/2\ZZ$. Since $P\supset J$, there is a subgroup  of $\pi_1(\tM)$
isomorphic to $\Delta $. Hence $\pi_1(\tM)$ is not $3$-free.

We have $\dim P=\dim V-2\ge 3$. Hence if we set 
$\tr=\dim H_1(\tM;\FF_p)$,
then the case $p=2$ of Lemma \ref{spex} gives
$\tr\ge 2\cdot 3+1=7$. Since  in particular we have $\tr\ge6$, and $\pi_1(\tM)$ is not
$3$-free, we may apply 
Proposition 
\ref{who you}, with $k=3$ and with $\tM$ playing the role of $M$,
to deduce
that $\vol\tM>2\voct$. 
Since $\tM$ is a four-fold covering of $M$, we have $\vol M=(\vol\tM)/4>\voct/2$, as required.
\EndProof

\section{
Volumes of manifolds with small cup product rank
}\label{cup section}

The next two lemmas are needed for the proof of Theorem \ref{cup
  stuff}, which was stated in the introduction.

\Lemma\label{from cusped}
 Suppose that $M$ is a closed, aspherical $3$–manifold.
Set $r=\dim H_1(M;\FF_2)$, and let $t$ denote
the dimension of  the image of the
cup product pairing
$H^1(M;\FF_2)
\otimes 
H^1(M;\FF_2)\to H^2(M;\FF_2)$.
Then for any two-sheeted covering $\tM$ of $M$, we have
$\dim H_1(M;\FF_2)\ge2r-t-1$.
\EndLemma

\Proof
This is the case $m=1$ of \cite[Proposition 3.5]{CS_onecusp}.
\EndProof

\Lemma\label{tongue or pen}
Let $M$ be a closed, orientable hyperbolic $3$-manifold. Suppose that
$\pi_1(M)$ is $4$-free and that $\dim H_1(M;\FF_2)\ge6$. Then
$\vol(M)>3.69$.
\EndLemma

\Proof
This follows from the proof of \cite[Proposition
14.5]{kfree-volume}. The latter proposition is equivalent to the
statement that if $M$ is a closed, orientable hyperbolic $3$-manifold
such that
$\dim H_1(M;\FF_2)\ge8$, then
$\vol(M)>3.69$. In the first two paragraphs of the proof of
\cite[Proposition
14.5]{kfree-volume}, it is shown that the hypothesis 
$\dim H_1(M;\FF_2)\ge8$ implies that either $M$ contains an
incompressible surface of genus at most $3$, or $\pi_1(M)$ is
$4$-free, and it is shown that in the former case one has a stronger
conclusion than $\vol(M)>3.69$. The remainder of the proof is devoted
to the case in which $M$ satisfies the homological hypothesis and
$\pi_1(M)$ is $4$-free. However, an examination of this part of the
proof reveals that while the assumption of $4$-freeness is used in an
essential way, the only homological information that is used is that
$\dim H_1(M;\FF_2)\ge6$. Thus this portion of the proof establishes
the present lemma.

More specifically, the argument given in \cite[Proposition
14.5]{kfree-volume} for the $4$-free case is divided into four
subcases, labeled (a)--(d). The proofs of the inequality $\vol(M)>3.69$
in Subcases (a) and (d) make strong use of $4$-freeness, but do not
depend on any homological information. The proofs in Subcases (b) and
(c) are direct applications of Lemma
14.4 of \cite{kfree-volume}. That lemma does not involve $4$-freeness,
but does have a homological hypothesis, namely  $\dim H_1(M;\FF_2)\ge6$.
\EndProof

As we mentioned in the Introduction, the following theorem improves on
Theorem 1.2 of \cite{DeSh}; and the proof given here, besides
strengthening the result, provides more details than the proof in
\cite{DeSh} and corrects a citation.

\Theorem\label{cup stuff}\CupStuffThrm\EndTheorem

\Proof
We shall first prove Assertions (1) and (2) in the case where
$\pi_1(M)$ is $4$-free. In this case Assertion (1) is an immediate
consequence of Proposition 12.12 of \cite{kfree-volume}, which asserts that every closed, orientable hyperbolic $3$-manifold with $4$-free fundamental
group has volume greater than $3.57$. To prove  Assertion (2) in
this case, we note that the hypothesis of   Assertion (2) implies that
$r\ge6$, which by Lemma \ref{tongue or pen} above and the $4$-freeness
of $\pi_1(M)$ implies $\vol(M)>3.69>\voct$.

We now prove both assertions in the case where $\pi_1(M)$ is not
$4$-free. In this case we fix a subgroup $\Delta$ of $\Pi$ which has
rank at most $4$ and is not free. The image $J$ of the inclusion homomorphism
$H_1(\Delta;\FF_2)\to H_1(\Pi;\FF_2)=V$ has dimension
at most $4$ since $\rank\Delta\le4$.
Since the hypothesis of either of the assertions (1) or (2) implies $\dim V\ge5$, there is a codimension-$1$ subspace $P$ of  $V$
containing $J$. The subspace $P$ defines a two-sheeted covering space
$\tM$ of $M$. Since $P\supset J$, there is a subgroup  of $\pi_1(\tM)$
isomorphic to $\Delta $. Hence $\pi_1(\tM)$ is not $4$-free.

According to Lemma \ref{from cusped}, we have $\dim
H_1(\tM;\FF_2)\ge2r-t-1$. But the hypothesis of either of the
assertions (1) or (2) implies that $2r-t-1\ge8$, and hence
$\dim
H_1(\tM;\FF_2)\ge8$. Since $\pi_1(\tM)$ is not $4$-free, we may now
apply 
Proposition 
 \ref{who you}, taking $k=4$, and letting
$\tM$ play the role of $M$, to deduce that 
$\vol\tM>2\voct$. 
Since $\tM$ is a two-sheeted covering
of $M$, we have $\vol\tM=(\vol M)/2>\voct$. Since $\voct>3.57$, this
establishes both Assertion (1) and Assertion (2) in this case.
\EndProof

\bibliographystyle{plain}
\bibliography{new-volumes}

\begin{thebibliography}{10}

\bibitem{agol-tameness}
Ian Agol.
\newblock Tameness of hyperbolic 3-manifolds.
\newblock arXiv:math/0405568v1[math.GT].

\bibitem{acs-surgery}
Ian Agol, Marc Culler, and Peter~B. Shalen.
\newblock Dehn surgery, homology and hyperbolic volume.
\newblock {\em Algebr. Geom. Topol.}, 6:2297--2312, 2006.

\bibitem{ACS}
Ian Agol, Marc Culler, and Peter~B. Shalen.
\newblock Singular surfaces, mod 2 homology, and hyperbolic volume. {I}.
\newblock {\em Trans. Amer. Math. Soc.}, 362(7):3463--3498, 2010.

\bibitem{ASTD}
Ian Agol, Peter~A. Storm, and William~P. Thurston.
\newblock Lower bounds on volumes of hyperbolic {H}aken 3-manifolds.
\newblock {\em J. Amer. Math. Soc.}, 20(4):1053--1077 (electronic), 2007.
\newblock With an appendix by Nathan Dunfield.

\bibitem{ACCS}
James~W. Anderson, Richard~D. Canary, Marc Culler, and Peter~B. Shalen.
\newblock Free {K}leinian groups and volumes of hyperbolic {$3$}-manifolds.
\newblock {\em J. Differential Geom.}, 43(4):738--782, 1996.

\bibitem{BO'Ne}
R.~L. Bishop and B.~O'Neill.
\newblock Manifolds of negative curvature.
\newblock {\em Trans. Amer. Math. Soc.}, 145:1--49, 1969.

\bibitem{cg}
A.~J. Casson and C.~McA. Gordon.
\newblock Reducing {H}eegaard splittings.
\newblock {\em Topology Appl.}, 27(3):275--283, 1987.

\bibitem{CDS}
Marc Culler, Jason De{B}lois, and Peter~B. Shalen.
\newblock Incompressible surfaces, hyperbolic volume, {H}eegaard genus and
  homology.
\newblock {\em Comm. Anal. Geom.}, 17(2):155--184, 2009.

\bibitem{SnapPy}
Marc Culler, Nathan~M. Dunfield, Matthias Goerner, and Jeffrey~R. Weeks.
\newblock Snap{P}y, a computer program for studying the geometry and topology
  of $3$-manifolds.
\newblock Available at \url{http://snappy.computop.org}.

\bibitem{CS_onecusp}
Marc Culler and Peter~B. Shalen.
\newblock Volume and homology of one-cusped hyperbolic 3-manifolds.
\newblock {\em Algebr. Geom. Topol.}, 8(1):343--379, 2008.

\bibitem{CS_vol}
Marc Culler and Peter~B. Shalen.
\newblock Singular surfaces, mod 2 homology, and hyperbolic volume, {II}.
\newblock {\em Topology Appl.}, 158(1):118--131, 2011.

\bibitem{CS_vol3.44}
Marc Culler and Peter~B. Shalen.
\newblock 4-free groups and hyperbolic geometry.
\newblock {\em J. Topol.}, 5(1):81--136, 2012.

\bibitem{DeB_TrigTruncTriTet}
Jason DeBlois.
\newblock Trigonometry of partially truncated triangles and tetrahedra.
\newblock arXiv:2403.04494, 2024.

\bibitem{DeSh}
Jason DeBlois and Peter~B. Shalen.
\newblock Volume and topology of bounded and closed hyperbolic 3-manifolds.
\newblock {\em Comm. Anal. Geom.}, 17(5):797--849, 2009.

\bibitem{Fenchel}
Werner Fenchel.
\newblock {\em Elementary geometry in hyperbolic space}, volume~11 of {\em de
  Gruyter Studies in Mathematics}.
\newblock Walter de Gruyter \& Co., Berlin, 1989.
\newblock With an editorial by Heinz Bauer.

\bibitem{FHS}
Michael Freedman, Joel Hass, and Peter Scott.
\newblock Least area incompressible surfaces in {$3$}-manifolds.
\newblock {\em Invent. Math.}, 71(3):609--642, 1983.

\bibitem{FMP}
Roberto Frigerio, Bruno Martelli, and Carlo Petronio.
\newblock Small hyperbolic 3-manifolds with geodesic boundary.
\newblock {\em Experiment. Math.}, 13(2):171--184, 2004.

\bibitem{kfree-volume}
Rosemary~K. Guzman and Peter~B. Shalen.
\newblock Hyperbolic volume, mod $2$ homology, and
  {$\MakeLowercase{k}$}-freeness.
\newblock Communications in Analysis and Geometry, to appear. arXiv:2010.03676.

\bibitem{ratioI}
Rosemary~K. Guzman and Peter~B. Shalen.
\newblock The ratio of homology rank to hyperbolic volume, {I}.
\newblock Journal of Topology and Analysis, to appear. arXiv:2110.14847x.

\bibitem{ratioIII}
Rosemary~K. Guzman and Peter~B. Shalen.
\newblock The ratio of homology rank to hyperbolic volume, {III}.
\newblock In preparation.

\bibitem{hatcher-basic}
Allen Hatcher.
\newblock Notes on basic 3-manifold topology.
\newblock https://pi.math.cornell.edu/~hatcher/3M/3M.pdf.

\bibitem{Hermann}
Robert Hermann.
\newblock Homogeneous {R}iemannian manifolds of non-positive sectional
  curvature.
\newblock {\em Indag. Math.}, 25:47--56, 1963.
\newblock Nederl. Akad. Wetensch. Proc. Ser. A {\bf 66}.

\bibitem{HodWee}
Craig~D. Hodgson and Jeffrey~R. Weeks.
\newblock Symmetries, isometries and length spectra of closed hyperbolic
  three-manifolds.
\newblock {\em Experiment. Math.}, 3(4):261--274, 1994.

\bibitem{JS}
William~H. Jaco and Peter~B. Shalen.
\newblock Seifert fibered spaces in {$3$}-manifolds.
\newblock {\em Mem. Amer. Math. Soc.}, 21(220):viii+192, 1979.

\bibitem{Jo}
Klaus Johannson.
\newblock {\em Homotopy equivalences of {$3$}-manifolds with boundaries},
  volume 761 of {\em Lecture Notes in Mathematics}.
\newblock Springer, Berlin, 1979.

\bibitem{Ko}
Sadayoshi Kojima.
\newblock Polyhedral decomposition of hyperbolic {$3$}-manifolds with totally
  geodesic boundary.
\newblock In {\em Aspects of low-dimensional manifolds}, volume~20 of {\em Adv.
  Stud. Pure Math.}, pages 93--112. Kinokuniya, Tokyo, 1992.

\bibitem{KM}
Sadayoshi Kojima and Yosuke Miyamoto.
\newblock The smallest hyperbolic {$3$}-manifolds with totally geodesic
  boundary.
\newblock {\em J. Differential Geom.}, 34(1):175--192, 1991.

\bibitem{LeeSmoothMflds}
John~M. Lee.
\newblock {\em Introduction to smooth manifolds}, volume 218 of {\em Graduate
  Texts in Mathematics}.
\newblock Springer, New York, second edition, 2013.

\bibitem{MeeksYauBoundary}
William~W. Meeks, III and Shing~Tung Yau.
\newblock The existence of embedded minimal surfaces and the problem of
  uniqueness.
\newblock {\em Math. Z.}, 179(2):151--168, 1982.

\bibitem{Miy}
Yosuke Miyamoto.
\newblock Volumes of hyperbolic manifolds with geodesic boundary.
\newblock {\em Topology}, 33(4):613--629, 1994.

\bibitem{Ratcliffe}
John~G. Ratcliffe.
\newblock {\em Foundations of hyperbolic manifolds}, volume 149 of {\em
  Graduate Texts in Mathematics}.
\newblock Springer-Verlag, New York, 1994.

\bibitem{Schar-Thomp}
Martin Scharlemann and Abigail Thompson.
\newblock Heegaard splittings of {$({\rm surface})\times I$} are standard.
\newblock {\em Math. Ann.}, 295(3):549--564, 1993.

\bibitem{SchoenYau}
R.~Schoen and Shing~Tung Yau.
\newblock Existence of incompressible minimal surfaces and the topology of
  three-dimensional manifolds with nonnegative scalar curvature.
\newblock {\em Ann. of Math. (2)}, 110(1):127--142, 1979.

\bibitem{SW}
Peter~B. Shalen and Philip Wagreich.
\newblock Growth rates, {$Z_p$}-homology, and volumes of hyperbolic
  {$3$}-manifolds.
\newblock {\em Trans. Amer. Math. Soc.}, 331(2):895--917, 1992.

\bibitem{Th1}
William~P. Thurston.
\newblock The geometry and topology of 3-manifolds.
\newblock mimeographed lecture notes, 1979.

\bibitem{Waldhausen}
Friedhelm Waldhausen.
\newblock On irreducible {$3$}-manifolds which are sufficiently large.
\newblock {\em Ann. of Math. (2)}, 87:56--88, 1968.

\end{thebibliography}

\end{document}